\newtheorem{theorem}{Theorem}[section]
\newtheorem{lemma}{Lemma}[section]
\newtheorem{definition}{Definition}[section]
\newtheorem{example}{Example}[section]
\newtheorem{proposition}{Proposition}[section]
\newtheorem{remark}{Remark}[section]
\newtheorem{atheorem}{Theorem}
\newenvironment{proof}{{\noindent \bf Proof:}}{\hfill$\Box$\medskip}
\definecolor{lred}{rgb}{1,0.8,0.8}
\definecolor{lblue}{rgb}{0.8,0.8,1}
\definecolor{dred}{rgb}{0.6,0,0}
\definecolor{dblue}{rgb}{0,0,0.5}
\definecolor{dgreen}{rgb}{0,0.5,0.5}
 \title{Several classes of stationary points for rank regularized minimization problems}
 \author{Yulan Liu\footnote{School of Applied Mathematics, Guangdong University of Technology, Guangzhou.}\ \ {\rm and}\ \
 Shaohua Pan\footnote{Corresponding author\,(shhpan@scut.edu.cn), School of Mathematics, South China University of Technology, Guangzhou.}}
 \date{}
\begin{document}

  \maketitle

 \begin{abstract}
  For the rank regularized minimization problem, we introduce several classes
  of stationary points by the problem itself and its equivalent reformulations
  including the mathematical program with an equilibrium constraint (MPEC),
  the global exact penalty of the MPEC, and the surrogate yielded by eliminating
  the dual part of the exact penalty. A clear relation chart is established
  among these stationary points, which offers a guidance to choose
  an appropriate reformulation for seeking a low-rank solution. As a byproduct,
  for the positive semidefinite (PSD) rank regularized minimization problem,
  we also provide a weaker condition for a local minimizer of its MPEC reformulation
  to be the M-stationary point by characterizing the directional limiting normal cone
  to the graph of the normal cone mapping of the PSD cone.
\end{abstract}

 \noindent
 {\bf Keywords:} Rank regularized minimization problems;
 stationary points; matrix MPECs; calmness; directional limiting normal cone

 \medskip
 \noindent
 {\bf Mathematics Subject Classification(2010):} 90C26, 49J52, 49J53

 \section{Introduction}\label{sec1}

 Let $\mathbb{R}^{m\times n}$ be the linear space of all $m\times n\ (m\le n)$ real matrices
 equipped with the trace inner product $\langle \cdot,\cdot\rangle$ and its
 induced norm $\|\cdot\|_F$, i.e., $\langle X,Y\rangle={\rm tr}(X^{\mathbb{T}}Y)$
 for $X,Y\in\mathbb{R}^{m\times n}$. Given a function $f\!:\mathbb{R}^{m\times n}\to\mathbb{R}$,
 we are interested in the rank regularized problem:
 \begin{equation}\label{rank-reg}
  \min_{X\in\mathbb{R}^{m\times n}}F(X):=\nu f(X)+{\rm rank}(X)+\delta_{\Omega}(X)
 \end{equation}
 where $\nu>0$ is the regularization parameter and $\Omega\subseteq\mathbb{R}^{m\times n}$
 is a closed convex set. Unless otherwise stated, we assume that $f$ is locally Lipschitz
 and $\widehat{\partial}f(X)=\partial\!f(X)$ for any $X\in\Omega$, where $\widehat{\partial}f(X)$
 and $\partial\!f(X)$ are the regular and limiting subdifferential of $f$ at $X$, respectively; see Section \ref{sec2.1} for their definitions.
  Such a model is frequently used to seek
 a low-rank matrix under the scenario where a tight estimation is unavailable for the rank of
 the target matrix, and is found to have a host of applications in a variety of fields such as
 statistics \cite{Negahban11}, control and system identification \cite{Fazel03,FPST13},
 signal and image processing \cite{Davenport16}, finance \cite{Pietersz04},
 quantum tomography \cite{Gross11}, and so on.

 \medskip

 Owing to the combinatorial property of the rank function, the problem \eqref{rank-reg}
 is generally NP-hard and it is impossible to achieve a global optimal solution
 by using an algorithm with polynomial-time complexity. So, it is common to obtain
 a desirable local optimal even feasible solution by solving a convex relaxation
 or surrogate problem. Although the nuclear-norm convex relaxation method \cite{Fazel02}
 is very popular, it has a weak ability to promote low-rank solutions and
 even fails to yielding low-rank solutions in some cases \cite{Miao16}.
 After recognizing this deficiency, some researchers pay their attentions to
 the nonconvex surrogates of low-rank optimization problems such as the log-determinant
 surrogate (see \cite{Fazel03,Mohan12}) and the Schatten $p\ (0<p<1)$-norm surrogate \cite{LXW13}.
 As illustrated in \cite{Nie12}, the efficiency of nonconvex surrogates
 depends on its approximation effect.

 \medskip

  Recently, by the variational characterization of the rank function, the authors
  of \cite{BiPan17,LiuBiPan18} reformulated the rank regularized problem \eqref{rank-reg}
  as an equivalent MPEC and derived an equivalent surrogate from its global
  exact penalty. In order to illustrate this, let $\mathscr{L}$ denote the family of
  proper lower semi-continuous (lsc) convex functions
  $\phi\!:\mathbb{R}\to(-\infty,+\infty]$ with
  \begin{equation}\label{phi-assump}
   {\rm int}({\rm dom}\,\phi)\supseteq[0,1],\ \phi(1)=1>t^*:=\mathop{\arg\min}_{0\le t\le 1}\phi(t)
   \ \ {\rm and}\ \ \phi(t^*)=0,
  \end{equation}
  and for each $\phi\in\!\mathscr{L}$ let $\psi\!:\mathbb{R}\to(-\infty,+\infty]$ be
 the associated lsc convex function given by
 \begin{equation}\label{phi-psi}
   \psi(t):=\!\left\{\!
              \begin{array}{cl}
                  \phi(t) &\textrm{if}\ t\in [0,1],\\
                   +\infty & \textrm{otherwise}.
                 \end{array}\right.
  \end{equation}
 With $\phi\in\mathscr{L}$, the rank regularized problem \eqref{rank-reg} can be equivalently reformulated as
 \begin{align}\label{MPEC}
  &\min_{X,W\in\mathbb{R}^{m\times n}}\nu f(X)+{\textstyle\sum_{i=1}^m}\phi(\sigma_i(W))+\delta_{\Omega}(X)\nonumber\\
  &\quad\ \ {\rm s.t.}\ \ \|X\|_*-\langle W,X\rangle=0,\,\|W\|\le 1
 \end{align}
 which is a matrix MPEC since the constraints $\|X\|_*-\langle W,X\rangle=0$
 and $\|W\|\le 1$ are equivalent to $X\in\mathcal{N}_{\mathbb{B}}(W)$ with
 \(
    \mathbb{B}\!:=\{Z\in\mathbb{R}^{m\times n}\,| \, \|Z\|\le 1\},
  \)
  i.e.,
 the optimality condition of $W\in\mathop{\arg\max}_{Z\in \mathbb{B}}\langle X,Z\rangle$.
 Under a mild condition, it was shown in \cite{BiPan17,LiuBiPan18} that
 the following penalized problem
 \begin{align}\label{Epenalty-MPEC}
  &\min_{X,W\in\mathbb{R}^{m\times n}}\nu f(X)+{\textstyle\sum_{i=1}^m}\phi(\sigma_i(W))
          +\rho(\|X\|_*-\langle W,X\rangle)\nonumber\\
  &\quad\ \ {\rm s.t.}\ \ X\in\Omega,\,\|W\|\le 1
 \end{align}
  is a global exact penalty of the MPEC \eqref{MPEC} in the sense that there exists $\overline{\rho}>0$
  such that the problem \eqref{Epenalty-MPEC} associated to each $\rho\ge\overline{\rho}$
  has the same global optimal solution set as \eqref{MPEC} does. With
  the conjugate function $\psi^*(s):=\sup_{t\in\mathbb{R}}\big\{st-\psi(t)\big\}$
  of $\psi$, one may eliminate the dual variable $W$ in \eqref{Epenalty-MPEC}
  and get the following equivalent surrogate of the problem \eqref{rank-reg}
  \begin{equation}\label{surrogate}
  \min_{X\in\Omega}\Big\{\nu f(X)+\rho\|X\|_*-{\textstyle\sum_{i=1}^m}\psi^*(\rho\sigma_i(X))\Big\}.
 \end{equation}

 As well known, when an algorithm is applied to nonconvex and nonsmooth
 optimization problems, one generally expects to achieve a stationary point,
 while the stationary points of equivalent reformulations may have a big difference.
 Thus, it is necessary to clarify the relation among the stationary points
 of \eqref{rank-reg} defined by its equivalent reformulations.
 Moreover, such a clarification is prerequisite to describe the landscape
 of stationary points for the rank regularized problem \eqref{rank-reg}.
 Motivated by this, in Section \ref{sec3} we introduce the R(egular)-stationary
 point, the M-stationary point, the EP-stationary point and the DC-stationary point
 by the problem \eqref{rank-reg} itself and its reformulation \eqref{MPEC}-\eqref{surrogate},
 respectively, and explore the relation among the four classes of stationary points.
 Figure 1 in Section \ref{sec3} shows that the set of M-stationary points
 is almost same as that of R-stationary points, the latter includes that of
 EP-stationary points under a rank condition, and the set of EP-stationary points
 coincides with that of DC-stationary points for some appropriate $\phi$.
 As a byproduct, for the PSD rank regularized minimization problem,
 we also provide a weaker condition than the one in \cite{DingSY14}
 for a local minimizer of its MPEC reformulation to be the M-stationary point,
 by the directional limiting normal cone to the graph of the normal cone mapping
 of the PSD cone $\mathbb{S}_{+}^n$.

 \medskip

 We notice that some active research has been done for the stationary
 points of zero-norm constrained optimization problems (see, e.g.,
 \cite{Burdakov16,PanLN17,Flegel05}); for example,
 Burdakov et al. \cite{Burdakov16} discussed the relation between
 the M-stationary point and the $S$-stationary point of their equivalent
 MPEC reformulation; and Pan et al. \cite{PanLN17} characterizes
 the first-order optimality condition which actually defines a class of
 stationary points by the tangent cone to the zero-norm constrained set.
 To the best of our knowledge, there are few works to study
 the stationary points of rank regularized optimization problems.
 For the special case $\Omega\subseteq\mathbb{S}_{+}^n$, the rank regularized
 problem \eqref{rank-reg} can reduce to a mathematical program with semidefinite
 conic complementarity constraints (MPSCCC) and Ding et al. \cite{DingSY14} have
 established the connection among several class of stationary points for the MPSCCC,
 which are defined by the equivalent reformulations of the complementarity constraints.
 However, this work is concerned with the relation among the stationary points defined by
 different equivalent reformulations of the rank regularized problem \eqref{rank-reg},
 and aims to establish a clear relation chart for these stationary points so that
 the user can be guided to choose an appropriate reformulation to seek a low-rank solution.

 \section{Notation and preliminaries}\label{sec2}

 Throughout this paper, a hollow capital means a finite
 dimensional vector space equipped with the inner product
 $\langle \cdot,\cdot\rangle$ and its induced norm $\|\cdot\|$.
 The notation $\mathbb{S}^n$ denotes the vector space of all $n\times n$
 real symmetric matrices equipped with the Frobenius norm, and $\mathbb{S}_{+}^n$
 means the set of all positive semidefinite matrices in $\mathbb{S}^n$.
 Let $\mathbb{O}^{m\times n}$ be the set of $m\times n$ matrices with orthonormal columns
 and denote $\mathbb{O}^{m\times m}$ by $\mathbb{O}^{m}$. For a given $X\in\mathbb{R}^{m\times n}$,
 we denote by $\|X\|_*$ and $\|X\|$ the nuclear norm and the spectral norm of $X$, respectively,
 and by $\sigma(X)\in\mathbb{R}^m$ the singular value vector arranged in a nonincreasing order;
 and write $\mathbb{O}^{m,n}(X):=\{(U,V)\in\mathbb{O}^{m}\times\mathbb{O}^n\,|\, X=U{\rm Diag}(\sigma(X))V^{\mathbb{T}}\}$.
 For a given $X\in\mathbb{R}^{m\times n}$ and two index sets $\alpha\subseteq\{1,\ldots,m\}$
 and $\beta\subseteq\{1,\ldots,n\}$, $X_{\alpha\beta}$ means the submatrix consists of
 those entries $X_{ij}$ with $i\in\alpha$ and $j\in\beta$. We denote by $E$ and $e$
 the matrix and the vector of all ones respectively whose dimension are known
 from the context, and by $I$ an identity matrix whose dimension is known from the context.
 For a given set $S$, $\delta_{S}$ denotes the indicator function of $S$,
 i.e., $\delta_{S}(x)=0$ if $x\in S$, otherwise $\delta_{S}(x)=+\infty$.
 For a given vector space $\mathbb{Z}$, $\mathbb{B}_{\mathbb{Z}}$ denotes
 the closed unit ball centered at the origin of $\mathbb{Z}$,
 and $\mathbb{B}_{\delta}(z)$ means the closed ball of
 radius $\delta$ centered at $z\in\mathbb{Z}$.
%
%


 \subsection{Normal cones and generalized differentials}\label{sec2.1}

  Let $S\subset\mathbb{Z}$ be a given set. The regular normal cone
  to $S$ at a point $\overline{z}\in S$ is defined by
 \[
  \widehat{\mathcal{N}}_{S}(\overline{z})
   :=\Big\{v\in\mathbb{Z}\ |\ \limsup_{z\xrightarrow[S]{}\overline{z}}
   \frac{\langle v,z-\overline{z}\rangle}{\|z-\overline{z}\|}\le 0\Big\}
 \]
 where the symbol $z\xrightarrow[S]{}\overline{z}$ signifies $z\to\overline{z}$ with $z\in S$,
 while the limiting normal cone to $S$ at $\overline{z}$ is defined as the outer limit
 of $\widehat{\mathcal{N}}_{S}(z)$ as $z\xrightarrow[S]{}\overline{z}$, i.e.,
 \begin{align}\label{NormDef}
   \mathcal{N}_{S}(\overline{z}):=\Big\{v\in \mathbb{Z}\,|\, \exists\, z^k\xrightarrow[S]{}\overline{z},v^k\to v {\ \ \rm with \ \ } v^k\in \widehat{\mathcal{N}}_{S}(z^k)\Big\}.
  \end{align}
  The limiting normal cone $\mathcal{N}_{S}(\overline{z})$ is generally
  not convex, but the regular normal $\widehat{\mathcal{N}}_{S}(\overline{z})$
  is always closed convex which is the negative polar of the contingent cone to
  $S$ at $\overline{x}$:
 \[
   \mathcal{T}_{S}(\overline{z}):=\big\{h\in\mathbb{Z}\ |\ \exists\, t_k\downarrow 0,\,
   h^k\to h\ {\rm with}\ \overline{z}+t_kh^k\in S\big\}.
 \]
 When $S$ is convex, $\mathcal{N}_{S}(\overline{z})$ and $\mathcal{\widehat{N}}_{S}(\overline{z})$
 are the normal cone in the sense of convex analysis \cite{Roc70}.
 The directional limiting normal cone to $S$ at $\overline{z}$ in a direction $u\in\mathbb{X}$
 is defined by
 \begin{equation*}\label{Dir-LNcone}
   \mathcal{N}_{S}(\overline{z};u):=\Big\{z^*\in\mathbb{Z}\ |\ \exists\, t_k\downarrow 0,\,u^k\to u,z^{k*}\to z^*\ {\rm with}\
   z^{k*}\in\widehat{\mathcal{N}}_{S}(\overline{z}\!+\!t_ku^k)\Big\}.
 \end{equation*}
 By comparing with the definition of $\mathcal{N}_{S}(\overline{z})$,
 clearly, $\mathcal{N}_{S}(\overline{z};u)\subseteq \mathcal{N}_{S}(\overline{z})$ for any $u\in\mathbb{X}$.

 \medskip

 Let $g\!: \mathbb{Z}\to[-\infty,+\infty]$ be an extended real-valued lsc function
 with $g(\overline{z})$ finite. The regular subdifferential of $g$ at $\overline{z}$,
 denoted by $\widehat{\partial}g(\overline{z})$, is defined as
  \[
    \widehat{\partial}g(\overline{z}):=\bigg\{z^*\in\mathbb{X}\ \big|\
    \liminf_{z\to \overline{z}\atop z\ne\overline{z}}
    \frac{g(z)-g(\overline{z})-\langle z^*,z-\overline{z}\rangle}{\|z-\overline{z}\|}\ge 0\bigg\};
  \]
  and the (limiting) subdifferential of $g$ at $\overline{z}$, denoted by $\partial g(\overline{z})$, is defined as
  \begin{align}\label{subgradientDef}
   \partial g(\overline{z})=\Big\{ z^*\in \mathbb{X}\,|\, \exists\, z^k\xrightarrow[g]{} z, z^{k,*}\to z^*\ {\rm \, such \, that\, }\ z^{k,*}\in \widehat{\partial}g(z^k)\Big\}.
  \end{align}
  From \cite[Theorem 8.9]{RW98} we know that there is close relation
  between the subdifferentials of $g$ at $\overline{z}$ and the normal cones of its epigraph
  at $(\overline{z},g(\overline{z}))$. Also, from \cite[Exercise 8.14]{RW98},
  \[
    \widehat{\mathcal{N}}_{S}(z)=\widehat{\partial}\delta_{S}(z)
    \ \ {\rm and}\ \
   \mathcal{N}_{S}(z)=\partial\delta_{S}(z)\ \ {\rm for}\ z\in S.
  \]
  In the sequel, we call a point $z$ at which $0\in\partial g(z)$
  (respectively, $0\in\widehat{\partial}g(z)$) is called a limiting
  (respectively, regular) critical point of $g$. By \cite[Theorem 10.1]{RW98},
  a local minimizer of $g$ is necessarily a regular critical point of $g$,
  and then a limiting critical point.
 \subsection{Lipschitz-like properties of multifunctions}\label{sec2.2}

 Let $\mathcal{F}\!:\mathbb{Z}\rightrightarrows\mathbb{W}$ be a given multifunction.
 Consider an arbitrary point $(\overline{z},\overline{w})\in{\rm gph}\mathcal{F}$
 at which $\mathcal{F}$ is locally closed, where ${\rm gph}\mathcal{F}$ denotes
 the graph of $\mathcal{F}$. We recall from \cite{RW98,DR09} the concepts of
 the Aubin property, calmness and metric subregularity of $\mathcal{F}$.
\begin{definition}\label{Aubin-def}
  The multifunction $\mathcal{F}$ is said to have the Aubin property at
  $\overline{z}$ for $\overline{w}$ with modulus $\kappa>0$, if there exist
  $\varepsilon>0$ and $\delta>0$ such that for all $z,z'\in\mathbb{B}_{\varepsilon}(\overline{z})$,
  \[
    \mathcal{F}(z)\cap\mathbb{B}_{\delta}(\overline{w})\subseteq\mathcal{F}(z')
    +\kappa\|z-z'\|\mathbb{B}_{\mathbb{W}}.
  \]
 \end{definition}
 \begin{definition}\label{calm-def}
  The multifunction $\mathcal{F}$ is said to be calm at $\overline{z}$ for $\overline{w}$
  with modulus $\kappa>0$ if there exist $\varepsilon>0$ and $\delta>0$ such that for all
  $z\in\mathbb{B}_{\varepsilon}(\overline{z})$,
  \begin{equation*}\label{calm-inclusion1}
    \mathcal{F}(z)\cap\mathbb{B}_{\delta}(\overline{w})
    \subseteq\mathcal{F}(\overline{z})+\kappa\|z-\overline{z}\|\mathbb{B}_{\mathbb{W}}.
  \end{equation*}
  If in addition $\mathcal{F}(\overline{z})\cap\mathbb{B}_{\delta}(\overline{w})=\{\overline{w}\}$,
  $\mathcal{F}$ is said to be isolated calm at $\overline{z}$ for $\overline{w}$.
 \end{definition}

  By \cite[Exercise 3H.4]{DR09}, the restriction on $z\in\mathbb{B}_{\varepsilon}(\overline{z})$
  in Definition \ref{calm-def} can be removed. It is easily seen that the calmness of
  $\mathcal{F}$ is a ``one-point'' variant of the Aubin property, and the calmness of
  $\mathcal{F}$ at $(\overline{z},\overline{w})\in{\rm gph}\mathcal{F}$
  is implied by its Aubin property or isolated calmness at this point.
  Notice that the calmness of $\mathcal{F}$ at $\overline{z}$ for $\overline{w}\in\mathcal{F}(\overline{z})$
  is equivalent to the metric subregularity of $\mathcal{F}^{-1}$ at $\overline{w}$ for $\overline{z}\in\mathcal{F}^{-1}(\overline{w})$ by \cite[Theorem 3H.3]{DR09}.

  \medskip

  The coderivative and graphical derivative of $\mathcal{F}$ are an convenient
  tool to characterize the Aubin property and the isolated calmness of $\mathcal{F}$,
  respectively. Recall from \cite{RW98} that the coderivative
  of $\mathcal{F}$ at $\overline{z}$ for $\overline{w}$ is the mapping $D^*\mathcal{F}(\overline{z}|\overline{w})\!:\mathbb{W}\rightrightarrows\mathbb{Z}$
  defined by
  \[
    u\in D^*\mathcal{F}(\overline{z}|\overline{w})(v)\Longleftrightarrow
    (u,-v)\in\mathcal{N}_{{\rm gph}\,\mathcal{F}}(\overline{z},\overline{w}),
  \]
  and the graphical derivative of $\mathcal{F}$ at
  $\overline{z}$ for $\overline{w}$ is the mapping
  $D\mathcal{F}(\overline{z}|\overline{w})\!:\mathbb{Z}\rightrightarrows\mathbb{W}$ given by
  \[
    v\in D\mathcal{F}(\overline{z}|\overline{w})(u)\Longleftrightarrow
    (u,v)\in\mathcal{T}_{{\rm gph}\,\mathcal{F}}(\overline{z},\overline{w}).
  \]
  \vspace{-0.8cm}
  \begin{lemma}\label{chara-Aubin}(See \cite[Theorem 5.7]{Mordu93}
  or \cite[Theorem 9.40]{RW98})\ Suppose that $\mathcal{F}$ is locally
  closed at $(\overline{z},\overline{w})$. Then $\mathcal{F}$ has
  the Aubin property at $\overline{z}$ for $\overline{w}$
  iff $D^*\mathcal{F}(\overline{z}|\overline{w})(0)=\{0\}$.
  \end{lemma}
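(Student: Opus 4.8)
The plan is to prove the two implications of this Mordukhovich criterion separately. The ``only if'' direction is routine. Suppose $\mathcal{F}$ has the Aubin property at $\overline{z}$ for $\overline{w}$ with modulus $\kappa$, and take $u\in D^*\mathcal{F}(\overline{z}|\overline{w})(0)$, i.e., $(u,0)\in\mathcal{N}_{{\rm gph}\mathcal{F}}(\overline{z},\overline{w})$. By \eqref{NormDef} there exist $(z^k,w^k)\xrightarrow[{\rm gph}\mathcal{F}]{}(\overline{z},\overline{w})$ and $(u^k,v^k)\to(u,0)$ with $(u^k,v^k)\in\widehat{\mathcal{N}}_{{\rm gph}\mathcal{F}}(z^k,w^k)$. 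For all large $k$ one has $w^k\in\mathcal{F}(z^k)\cap\mathbb{B}_{\delta}(\overline{w})$, so the Aubin inclusion yields, for every $z'$ near $\overline{z}$, some $w'\in\mathcal{F}(z')$ with $\|w'-w^k\|\le\kappa\|z'-z^k\|$. Substituting such $(z',w')$ into the inequality defining $\widehat{\mathcal{N}}_{{\rm gph}\mathcal{F}}(z^k,w^k)$, bounding $|\langle v^k,w'-w^k\rangle|\le\kappa\|v^k\|\|z'-z^k\|$, dividing by $\|z'-z^k\|$ and letting $z'\to z^k$ along the ray $z^k+t u^k/\|u^k\|$, I get $\|u^k\|\le\kappa\|v^k\|$ for all large $k$; passing to the limit gives $u=0$, hence $D^*\mathcal{F}(\overline{z}|\overline{w})(0)=\{0\}$.

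For the ``if'' direction I would first upgrade the pointwise coderivative condition to a uniform one: there exist $\kappa>0$ and $r>0$ such that $\|u\|\le\kappa\|v\|$ for every $(u,v)\in\widehat{\mathcal{N}}_{{\rm gph}\mathcal{F}}(z,w)$ with $(z,w)\in{\rm gph}\mathcal{F}\cap\mathbb{B}_{r}(\overline{z},\overline{w})$. Were this false, one could pick $(z^k,w^k)\xrightarrow[{\rm gph}\mathcal{F}]{}(\overline{z},\overline{w})$ and $(u^k,v^k)\in\widehat{\mathcal{N}}_{{\rm gph}\mathcal{F}}(z^k,w^k)$ with $\|u^k\|=1>k\|v^k\|$; any cluster point $u$ of $\{u^k\}$ would satisfy $(u,0)\in\mathcal{N}_{{\rm gph}\mathcal{F}}(\overline{z},\overline{w})$ with $\|u\|=1$, contradicting the hypothesis. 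Next, for $w$ near $\overline{w}$ consider the proper, nonnegative, lsc function $\varphi_w(z):={\rm dist}(w,\mathcal{F}(z))$, which is well defined and lsc near $\overline{z}$ because $\mathcal{F}$ is locally closed at $(\overline{z},\overline{w})$, and write it as the marginal function $\varphi_w(z)=\inf_{\zeta}\big\{\|\zeta-w\|+\delta_{{\rm gph}\mathcal{F}}(z,\zeta)\big\}$. Applying the subdifferential rule for marginal functions together with the Lipschitz sum rule (see, e.g., \cite[Theorem 10.13 and Corollary 10.9]{RW98}), every $u\in\partial\varphi_w(z)$ satisfies $(u,v)\in\mathcal{N}_{{\rm gph}\mathcal{F}}(z,\zeta)$ for some $\zeta\in\mathcal{F}(z)$ attaining the distance and some $v$ with $\|v\|\le 1$; combining this with the uniform estimate above (which transfers from $\widehat{\mathcal{N}}_{{\rm gph}\mathcal{F}}$ to $\mathcal{N}_{{\rm gph}\mathcal{F}}$ by taking outer limits) yields $\|u\|\le\kappa\|v\|\le\kappa$. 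Thus $\partial\varphi_w(z)\subseteq\kappa\mathbb{B}_{\mathbb{Z}}$ for all $z$ near $\overline{z}$ and all $w$ near $\overline{w}$.

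It then remains to convert this uniform subgradient bound into the Aubin inclusion, and this is where I expect the real work (and the main obstacle) to lie, since one must turn the purely ``directional'' content of $\widehat{\mathcal{N}}_{{\rm gph}\mathcal{F}}$ into a genuine metric (error-bound) estimate. Suppose $\mathcal{F}$ does not have the Aubin property at $\overline{z}$ for $\overline{w}$; then for each $k$ there are $z_k,z_k'\to\overline{z}$ and $w_k\in\mathcal{F}(z_k)\cap\mathbb{B}_{1/k}(\overline{w})$ with $\varphi_{w_k}(z_k)=0$ but $\varphi_{w_k}(z_k')>k\|z_k-z_k'\|$ (and necessarily $z_k\neq z_k'$). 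Since the segment $[z_k,z_k']$ lies near $\overline{z}$, Zagrodny's approximate mean value theorem applied to the lsc function $\varphi_{w_k}$ on this segment produces points $c_k^{j}\to c_k\in[z_k,z_k']$ and subgradients $u_k^{j}\in\partial\varphi_{w_k}(c_k^{j})$ with $\liminf_{j\to\infty}\langle u_k^{j},z_k'-z_k\rangle\ge\varphi_{w_k}(z_k')-\varphi_{w_k}(z_k)=\varphi_{w_k}(z_k')$; since $\|u_k^{j}\|\le\kappa$ once $k$ and $j$ are large, this gives $\varphi_{w_k}(z_k')\le\kappa\|z_k-z_k'\|$, which contradicts $\varphi_{w_k}(z_k')>k\|z_k-z_k'\|$ as soon as $k>\kappa$. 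Hence $\mathcal{F}$ has the Aubin property at $\overline{z}$ for $\overline{w}$ with modulus $\kappa$, completing the proof. Everything outside this last step — the linearization in the first direction, and the compactness and marginal-function arguments in the second — is comparatively routine; the quantitative conversion, which ultimately rests on Ekeland's variational principle underlying Zagrodny's theorem, is the crux. (This is essentially the route followed by \cite[Theorem 9.40]{RW98} and \cite[Theorem 5.7]{Mordu93}.)
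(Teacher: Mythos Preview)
The paper does not supply its own proof of this lemma; it simply cites \cite[Theorem 5.7]{Mordu93} and \cite[Theorem 9.40]{RW98} and moves on. So there is no ``paper's proof'' to compare against, and your outline is essentially the classical Mordukhovich--Rockafellar--Wets route you cite at the end. The ``only if'' direction and the compactness upgrade to a uniform estimate $\|u\|\le\kappa\|v\|$ on $\widehat{\mathcal{N}}_{{\rm gph}\mathcal{F}}$ near $(\overline{z},\overline{w})$ are correct as written.

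There is, however, a genuine gap in the ``if'' direction, precisely at the step you flag as the crux but then treat as settled. You claim $\partial\varphi_w(z)\subseteq\kappa\mathbb{B}_{\mathbb{Z}}$ for \emph{all} $z$ near $\overline{z}$ and $w$ near $\overline{w}$, because any $u\in\partial\varphi_w(z)$ comes paired with some $(u,v)\in\mathcal{N}_{{\rm gph}\mathcal{F}}(z,\zeta)$ at a distance realizer $\zeta\in\mathcal{F}(z)$. But your uniform bound $\|u\|\le\kappa\|v\|$ is only available when $(z,\zeta)\in\mathbb{B}_r(\overline{z},\overline{w})$, and nothing forces $\zeta$ to lie near $\overline{w}$: one has $\|\zeta-\overline{w}\|\le\varphi_w(z)+\|w-\overline{w}\|$, and $\varphi_w(z)$ is not controlled a priori (indeed, $\mathcal{F}(z)$ may be empty or far from $\overline{w}$). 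In your Zagrodny step the subgradients $u_k^j$ live at points $c_k^j$ where $\varphi_{w_k}(c_k^j)$ can be as large as $\varphi_{w_k}(z_k')>k\|z_k-z_k'\|$, so the corresponding $\zeta_k^j$ need not stay in $\mathbb{B}_r(\overline{w})$ and the bound $\|u_k^j\|\le\kappa$ is unjustified.

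The standard repair is to localize before invoking the mean-value/Ekeland argument: either (i) first prove the metric-regularity form $\mathrm{dist}(z,\mathcal{F}^{-1}(w))\le\kappa'\,\mathrm{dist}(w,\mathcal{F}(z))$ for $(z,w)$ in a small ball around $(\overline{z},\overline{w})$ \emph{and} with $\mathrm{dist}(w,\mathcal{F}(z))$ already small, using Ekeland's principle applied to $\zeta\mapsto\|\zeta-w\|+\delta_{{\rm gph}\mathcal{F}}(z',\zeta)$ to produce a nearby graph point where the regular normal estimate applies; or (ii) in your contradiction argument, replace $z_k'$ by the first point along $[z_k,z_k']$ where $\varphi_{w_k}$ reaches a fixed small threshold $\eta<r/2$, so that the Zagrodny points stay in the region where $\zeta$ is controlled. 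Either route closes the gap and yields the Aubin property with any modulus $\kappa'>\kappa$; this is exactly how \cite[Theorem 9.40]{RW98} proceeds.
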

  \begin{lemma}\label{chara-icalm}(See \cite[Proposition 2.1]{KR92}
  or \cite[Proposition 4.1]{Levy96})\ Suppose that $\mathcal{F}$ is locally
  closed at $(\overline{z},\overline{w})$. Then $\mathcal{F}$ is isolated calm
  at $\overline{z}$ for $\overline{w}$ iff $D\mathcal{F}(\overline{z}|\overline{w})(0)=\{0\}$.
  \end{lemma}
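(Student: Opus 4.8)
The plan is to derive both implications directly from Definition~\ref{calm-def} and from the definition of the graphical derivative via the contingent cone, after first recasting isolated calmness as a pointwise error bound on the graph. Concretely, I would first record that $\mathcal{F}$ is isolated calm at $\overline{z}$ for $\overline{w}$ if and only if there are $\kappa>0$ and $\delta>0$ with
\[
\|w-\overline{w}\|\le\kappa\|z-\overline{z}\|\quad\text{for all }(z,w)\in{\rm gph}\,\mathcal{F}\cap\big(\mathbb{B}_{\delta}(\overline{z})\times\mathbb{B}_{\delta}(\overline{w})\big).
\]
For the nontrivial direction of this reformulation one writes $w=w'+\kappa\|z-\overline{z}\|b$ with $w'\in\mathcal{F}(\overline{z})$ and $\|b\|\le1$ as in Definition~\ref{calm-def}; after shrinking the two radii, $w'$ still lies in $\mathbb{B}_{\delta}(\overline{w})$, so the ``isolated'' clause forces $w'=\overline{w}$ and the estimate follows. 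The converse direction is immediate since $\overline{w}\in\mathcal{F}(\overline{z})$.

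For the implication ``isolated calm $\Rightarrow$ $D\mathcal{F}(\overline{z}|\overline{w})(0)=\{0\}$'', I would take any $v\in D\mathcal{F}(\overline{z}|\overline{w})(0)$, i.e. $(0,v)\in\mathcal{T}_{{\rm gph}\,\mathcal{F}}(\overline{z},\overline{w})$, and choose $t_k\downarrow0$ and $(u^k,v^k)\to(0,v)$ with $(\overline{z}+t_ku^k,\overline{w}+t_kv^k)\in{\rm gph}\,\mathcal{F}$. For $k$ large both points lie in the respective $\delta$-balls, so the displayed estimate gives $t_k\|v^k\|\le\kappa t_k\|u^k\|$, hence $\|v^k\|\le\kappa\|u^k\|$; letting $k\to\infty$ yields $\|v\|\le0$, i.e. $v=0$.

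For the reverse implication I would argue by contraposition. If $\mathcal{F}$ is not isolated calm, then, since the displayed characterization fails, for each $k$ I can take $\kappa=k$, $\delta=1/k$ and obtain $(z^k,w^k)\in{\rm gph}\,\mathcal{F}$ with $\|z^k-\overline{z}\|\le1/k$, $\|w^k-\overline{w}\|\le1/k$ and $\|w^k-\overline{w}\|>k\|z^k-\overline{z}\|$; in particular $w^k\ne\overline{w}$. Setting $t_k:=\|w^k-\overline{w}\|>0$, $u^k:=(z^k-\overline{z})/t_k$ and $v^k:=(w^k-\overline{w})/t_k$, one has $\|v^k\|=1$ and $\|u^k\|=\|z^k-\overline{z}\|/\|w^k-\overline{w}\|<1/k\to0$. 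Passing to a subsequence so that $t_k\downarrow0$ and $v^k\to v$ (necessarily $\|v\|=1$), the points $(\overline{z}+t_ku^k,\overline{w}+t_kv^k)=(z^k,w^k)$ lie on ${\rm gph}\,\mathcal{F}$, whence $(0,v)\in\mathcal{T}_{{\rm gph}\,\mathcal{F}}(\overline{z},\overline{w})$ and $0\ne v\in D\mathcal{F}(\overline{z}|\overline{w})(0)$, contradicting the hypothesis.

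The only point that needs care — rather than a genuine obstacle — is the normalization in the last step: one must divide by $\|w^k-\overline{w}\|$, not by $\|z^k-\overline{z}\|$ (which may vanish) nor by any preassigned null sequence, so that the rescaled $\mathbb{W}$-component keeps unit norm while the inequality $\|w^k-\overline{w}\|>k\|z^k-\overline{z}\|$ drives the $\mathbb{Z}$-component to zero, producing precisely a tangent direction of the form $(0,v)$ with $v\ne0$. I note that local closedness of $\mathcal{F}$ at $(\overline{z},\overline{w})$ is not actually used in this argument (it is the standing hypothesis needed for the companion characterization of the Aubin property in Lemma~\ref{chara-Aubin}); here one only uses $(\overline{z},\overline{w})\in{\rm gph}\,\mathcal{F}$, so that $\overline{w}\in\mathcal{F}(\overline{z})$.
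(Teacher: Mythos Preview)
Your argument is correct. The reformulation of isolated calmness as the local error bound $\|w-\overline{w}\|\le\kappa\|z-\overline{z}\|$ on ${\rm gph}\,\mathcal{F}$ is handled carefully (the shrinking of radii to force $w'=\overline{w}$ is exactly the point), the forward implication is immediate from the tangent-cone definition, and the contrapositive with the normalization $t_k=\|w^k-\overline{w}\|$ is the right choice to manufacture a nonzero tangent direction of the form $(0,v)$. Your remark that local closedness is not actually invoked in either direction is also accurate.

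As for comparison: the paper does not prove this lemma at all. It is stated with references to \cite[Proposition~2.1]{KR92} and \cite[Proposition~4.1]{Levy96} and used as a black box (in Remark~\ref{PSDremark}(i)). So you have supplied a self-contained elementary proof where the paper simply cites the literature; there is nothing to compare on the level of technique.
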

 \subsection{Coderivative of the subdifferential mapping $\partial\|\cdot\|_*$}\label{sec2.3}

  For a given $X\in\mathbb{R}^{m\times n}$ with SVD as
  $U[{\rm Diag}(\sigma(X))\ \ 0]V^{\mathbb{T}}$, by \cite[Example 2]{Watson92}
  we have
 \begin{equation}\label{subdiff-nnorm}
 \partial \|X\|_*=\Big\{[U_1\ \ U_2]\left[\begin{matrix}
                                       I &0\\
                                       0 & Z
                                       \end{matrix}\right]
                                       [V_1\ \ V_2]^{\mathbb{T}}\;|\; \|Z\|\leq 1\Big\}
 \end{equation}
 where $U_1$ and $V_1$ are the submatrix consisting of the first $r={\rm rank}(X)$ columns
 of $U$ and $V$, respectively, and $U_2$ and $V_2$ are the submatrix consisting
 of the last $m\!-r$ columns and $n-r$ columns of $U$ and $V$, respectively.
 In this part we recall from \cite{LiuPan19} the coderivative of the subdifferential
 mapping $\partial\|\cdot\|_*$. For this purpose, in the sequel for two positive integers
 $k_1$ and $k_2$ with $k_2\ge k_1$, we denote by $[k_1,k_2]$ the set $\{k_1, k_1\!+\!1,\ldots,k_2\}$.
 For a given $\overline{Z}\in\mathbb{R}^{m\times n}$, define the following index sets associated to
 its singular values:
 \begin{subequations}
 \begin{align}\label{abg-index1}
   \!\alpha\!:=\!\{i\!\in\![1,m]\ |\ \sigma_i(\overline{Z})\!>\!1\},\
   \beta\!:=\!\{i\!\in\![1,m]\ |\ \sigma_i(\overline{Z})\!=\!1\},\ c=[m\!+\!1,n],\qquad\\
   \gamma:=\gamma_1\cup\gamma_0\ {\rm for}\ \gamma_1\!:=\big\{i\in [1,m]\ |\ 0<\sigma_i(\overline{Z})<1\big\},
   \gamma_0\!:=\!\big\{i\in [1,m]\ |\ \sigma_i(\overline{Z})=0\big\},
   \label{abg-index2}
  \end{align}
 \end{subequations}
  and let $\Omega_1,\Omega_2\in\mathbb{S}^{m}$ and $\Omega_3\in\mathbb{R}^{m\times (n-m)}$ be
  the matrices associated to $\sigma(\overline{Z})$ given by
  \begin{subequations}
  \begin{align}\label{Omega1}
   \big(\Omega_1\big)_{ij}
    &:=\left\{\begin{array}{cl}
          \frac{\min(1,\sigma_i(\overline{Z}))-\min(1,\sigma_j(\overline{Z}))}
          {\sigma_i(\overline{Z})-\sigma_j(\overline{Z})}& {\rm if}\ \sigma_i(\overline{Z})\ne\sigma_j(\overline{Z}),\\
           0 & {\rm otherwise}\\
   \end{array}\right.\ \ i,j\in\{1,2,\ldots,m\},\\
   \label{Omega2}
   \big(\Omega_2\big)_{ij}
   &:=\left\{\begin{array}{cl}
          \frac{\min(1,\sigma_i(\overline{Z}))+\min(1,\sigma_j(\overline{Z}))}
          {\sigma_i(\overline{Z})+\sigma_j(\overline{Z})}& {\rm if}\ \sigma_i(\overline{Z})+\!\sigma_j(\overline{Z})\ne 0,\\
           0 & {\rm otherwise}\\
   \end{array}\right.\ \ i,j\in\{1,2,\ldots,m\},\\
   \label{Omega3}
   \big(\Omega_3\big)_{ij}
   &:=\left\{\begin{array}{cl}
          \frac{\min(1,\sigma_i(\overline{Z}))}{\sigma_i(\overline{Z})}
          & {\rm if}\ \sigma_i(\overline{Z})\ne 0,\\
           0 & {\rm otherwise}\\
   \end{array}\right.\ \ i\in\{1,\ldots,m\},j\in\{1,\ldots,n\!-\!m\}.
  \end{align}
  \end{subequations}
  With the matrices $\Omega_1,\Omega_2\in\mathbb{S}^{m}$ and $\Omega_3\in\mathbb{R}^{m\times (n-m)}$,
  we define the following matrices
  \begin{subequations}
  \begin{equation*}\label{Theta}
  \Theta_1:=\left[\begin{matrix}
            0_{\alpha\alpha} & 0_{\alpha \beta} &(\Omega_1)_{\alpha\gamma}\\
            0_{\beta\alpha}& 0_{\beta\beta}&E_{\beta\gamma}\\
            (\Omega_1)_{\gamma\alpha}& E_{\gamma\beta}&E_{\gamma\gamma}\\
           \end{matrix}\right],\
  \Theta_2:=\left[\begin{matrix}
             E_{\alpha\alpha} & E_{\alpha \beta} &E_{\alpha\gamma}\!-\!(\Omega_1)_{\alpha\gamma}\\
             E_{\beta\alpha}& 0_{\beta\beta}&0_{\beta\gamma}\\
             E_{\gamma\alpha}\!-\!(\Omega_1)_{\gamma\alpha}& 0_{\gamma\beta}&0_{\gamma\gamma}\\
            \end{matrix}\right],\\
  \end{equation*}
 \begin{equation*}\label{Sigma}
  \Sigma_1\!:=\!\left[\begin{matrix}
             (\Omega_2)_{\alpha\alpha} & (\Omega_2)_{\alpha\beta} & (\Omega_2)_{\alpha\gamma}\\
             (\Omega_2)_{\beta\alpha}& 0_{\beta\beta}&E_{\beta\gamma}\\
             (\Omega_2)_{\gamma\alpha}& E_{\gamma \beta}&E_{\gamma\gamma}\\
             \end{matrix}\right],
  \Sigma_2\!:=\!\left[\begin{matrix}
              E_{\alpha\alpha}\!-\!(\Omega_2)_{\alpha\alpha} & E_{\alpha \beta}\!-\!(\Omega_2)_{\alpha\beta}
              &E_{\alpha\gamma}\!-\!(\Omega_2)_{\alpha\gamma}\\
              E_{\beta\alpha}\!-\!(\Omega_2)_{\beta\alpha }& 0_{\beta \beta}&0_{\beta\gamma}\\
              E_{\gamma\alpha}\!-\!(\Omega_2)_{\gamma\alpha}& 0_{\gamma\beta}&0_{\gamma\gamma}\\
             \end{matrix}\!\right].
  \end{equation*}
  \end{subequations}
 For the index set $\beta$, we denote the set of all partitions of $\beta$ by
 $\mathscr{P}(\beta)$. Define the set
 \[
   \mathbb{R}_{>}^{|\beta|}:=\big\{z\in\mathbb{R}^{|\beta|}\!:\ z_1\ge\cdots\ge z_{|\beta|}>0\big\}.
 \]
 For any $z\in\mathbb{R}_{>}^{|\beta|}$, let $D(z)\in\mathbb{S}^{|\beta|}$ denote
 the first generalized divided difference matrix of $h(t)=\min(1,t)$ at $z$,
 which is defined as
 \begin{equation}\label{Ddiff-matrix}
   (D(z))_{ij}:=\left\{\begin{array}{cl}
               \!\frac{\min(1,z_i)-\min(1,z_j)}{z_i-z_j}\in[0,1]&{\rm if}\ z_i\ne z_j,\\
                 0 &{\rm if}\ z_i=z_j\ge 1,\\
                1 & {\rm otherwise}.
                \end{array}\right.
 \end{equation}
 Write
 \(
   \mathcal{U}_{|\beta|}
   :=\big\{\overline{\Omega}\in\mathbb{S}^{|\beta|}\!:\ \overline{\Omega}=\lim_{k\to\infty}D(z^k),\,
   z^k\to e_{|\beta|},\, z^k\in\mathbb{R}_{>}^{|\beta|}\big\}.
 \)
 For each $\Xi_1\in\mathcal{U}_{|\beta|}$, by equation \eqref{Ddiff-matrix} there exists
 a partition $(\beta_{+},\beta_{0},\beta_{-})\in\mathscr{P}(\beta)$ such that
  \begin{equation}\label{Xi1-matrix}
   \Xi_1=\left[\begin{matrix}
          0_{\beta_{+}\beta_{+}}& 0_{\beta_{+}\beta_{0}}&(\Xi_1)_{\beta_{+}\beta_{-}}\\
          0_{\beta_{0}\beta_{+}}& 0_{\beta_{0}\beta_{0}}& E_{\beta_{0}\beta_{-}}\\
          (\Xi_1)_{\beta_{+}\beta_{-}}^{\mathbb{T}}&E_{\beta_{-}\beta_{0}}&E_{\beta_{-}\beta_{-}}
         \end{matrix}\right],
 \end{equation}
  where each entry of $(\Xi_1)_{\beta_{+}\beta_{-}}$ belongs to $[0,1]$.
  Let $\Xi_2$ be the matrix associated to $\Xi_1$:
 \begin{equation}\label{Xi2-matrix}
   \Xi_2=\left[\begin{matrix}
          E_{\beta_{+}\beta_{+}}& E_{\beta_{+}\beta_{0}}& E_{\beta_{+}\beta_{-}}\!-\!(\Xi_1)_{\beta_{+}\beta_{-}}\\
          E_{\beta_{0}\beta_{+}}& 0_{\beta_{0}\beta_{0}}& 0_{\beta_{0}\beta_{-}}\\
          E_{\beta_{-}\beta_{+}}\!-\!(\Xi_1)_{\beta_{+}\beta_{-}}^{\mathbb{T}}&0_{\beta_{-}\beta_{0}}&0_{\beta_{-}\beta_{-}}
         \end{matrix}\right].
 \end{equation}
  Now we are in a position to give the coderivative of
 the subdifferential mapping $\partial\|\cdot\|_*$.
 \begin{lemma}(See \cite[Theorem 3.2]{LiuPan19})\label{coderivative}
  Fix an arbitrary $(X,W)\in\!{\rm gph}\,\partial\|\cdot\|_*$ and let
  $\alpha,\beta,\gamma$ and $c$ be defined by \eqref{abg-index1}-\eqref{abg-index2}
  with $\overline{Z}\!=X\!+W$. Let $(\overline{U},\overline{V})\in\mathbb{O}^{m,n}(\overline{Z})$
  with $\overline{V}=[\overline{V}_1\ \ \overline{V}_2]$ where $\overline{V}_1\in\mathbb{O}^{n\times m}$
  and $\overline{V}_2\in\mathbb{O}^{n\times (n-m)}$,
  and for each $H\in\mathbb{R}^{m\times n}$ write $\widetilde{H}=\overline{U}^{\mathbb{T}}\!H\overline{V}$
  and $\widetilde{H}_1=\overline{U}^{\mathbb{T}}\!H\overline{V}_1$.
  Then, $(G,H)\in\mathcal{N}_{{\rm gph}\,\partial \|\cdot\|_*}(X,W)$
  iff the following relations hold
  \begin{subequations}
  \begin{align}\label{equa1-normal}
  \Theta_1\circ\mathcal{S}(\widetilde{H}_1)+\Theta_2\circ \mathcal{S}(\widetilde{G}_1)
    +\Sigma_1\circ \mathcal{X}(\widetilde{H}_1)+\Sigma_2\circ \mathcal{X}(\widetilde{G}_1)=0,\qquad\qquad\\
    \label{equa2-normal}
   \widetilde{G}_{\alpha c}+(\Omega_3)_{\alpha c}\circ
   (\widetilde{H}_{\alpha c}-\widetilde{G}_{\alpha c})=0,\,\widetilde{H}_{\beta c}=0,\,
   \widetilde{H}_{\gamma c}=0,\qquad\qquad\quad\\
   \label{equa3-normal}
   (\widetilde{G}_{\beta\beta},\widetilde{H}_{\beta\beta})\in\!
   \bigcup_{Q\in\mathbb{O}^{|\beta|}\atop \Xi_1\in\mathcal{U}_{|\beta|}}
   \!\left\{(M,N)\ \bigg|\!\left.\begin{array}{ll}
    \Xi_1\circ\widehat{N}+\Xi_2\circ \mathcal{S}(\widehat{M})+\Xi_2\circ \mathcal{X}(\widehat{N})=0\\
    \quad {\rm with}\ \widehat{N}=Q^{\mathbb{T}}NQ,\,\widehat{M}=Q^{\mathbb{T}}MQ,\\
    \quad Q_{\beta_0}^{\mathbb{T}}MQ_{\beta_0}\preceq 0,\
     Q_{\beta_0}^{\mathbb{T}}NQ_{\beta_0}\succeq 0
   \end{array}\right.\!\right\}
 \end{align}
 \end{subequations}
  where $\mathcal{S}\!:\mathbb{R}^{m\times m}\to\mathbb{S}^m$ and
  $\mathcal{X}\!:\mathbb{R}^{m\times m}\to\mathbb{R}^{m\times m}$ are
  linear the mappings defined by
  \begin{equation}\label{ST-oper}
   \mathcal{S}(Y):=(Y\!+\!Y^{\mathbb{T}})/2\ \ {\rm and}\ \
   \mathcal{X}(Y):=(Y\!-\!Y^{\mathbb{T}})/2
   \quad\ \forall\, Y\in\mathbb{R}^{m\times m},
  \end{equation}
  and the notation ``$\circ$'' denotes the Hardmard product operator of two matrices.
  \end{lemma}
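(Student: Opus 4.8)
The assertion is \cite[Theorem 3.2]{LiuPan19}. The plan is to reduce the computation of $\mathcal{N}_{{\rm gph}\,\partial\|\cdot\|_*}(X,W)$ to that of the limiting normal cone of the graph of the metric projection $P_{\mathbb{B}}$ onto the spectral-norm ball $\mathbb{B}=\{Z\in\mathbb{R}^{m\times n}:\|Z\|\le1\}$, and then to exploit that $P_{\mathbb{B}}$ is a locally Lipschitz spectral operator. For the reduction: since $\|\cdot\|_*$ is the support function of $\mathbb{B}$ — so \eqref{subdiff-nnorm} amounts to $\partial\|X\|_*=\arg\max_{Z\in\mathbb{B}}\langle Z,X\rangle$ — one has $W\in\partial\|X\|_*$ iff $X\in\mathcal{N}_{\mathbb{B}}(W)$ iff $W=P_{\mathbb{B}}(X+W)$. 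Hence the linear bijection $\Phi(Z,W):=(Z-W,W)$ carries ${\rm gph}\,P_{\mathbb{B}}$ onto ${\rm gph}\,\partial\|\cdot\|_*$, sending $(\overline{Z},P_{\mathbb{B}}(\overline{Z}))$ with $\overline{Z}=X+W$ to $(X,W)$; and since regular and limiting normal cones transform under a linear bijection $\Phi$ via $\mathcal{N}_{\Phi(S)}(\Phi(s))=(\Phi^*)^{-1}\mathcal{N}_{S}(s)$, it suffices to compute $\mathcal{N}_{{\rm gph}\,P_{\mathbb{B}}}(\overline{Z},\overline{W})$ at $\overline{W}=W=P_{\mathbb{B}}(\overline{Z})$ and to pull the description back along $\Phi^*$. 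This pull-back is what later makes $\widetilde{G}_1$ appear alongside $\widetilde{H}_1$ in \eqref{equa1-normal} and $\widetilde{H}_{\alpha c}-\widetilde{G}_{\alpha c}$ in \eqref{equa2-normal}.

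The engine of the second step is that $P_{\mathbb{B}}$ is the spectral operator associated with $h(t)=\min(1,t)$: for any SVD $\overline{Z}=\overline{U}[{\rm Diag}(\sigma(\overline{Z}))\ \ 0]\overline{V}^{\mathbb{T}}$ one has $P_{\mathbb{B}}(\overline{Z})=\overline{U}[{\rm Diag}(h(\sigma(\overline{Z})))\ \ 0]\overline{V}^{\mathbb{T}}$, and I would use the SVD-based derivative calculus for such operators, or equivalently the symmetric dilation under which a singular value $\sigma$ of $\overline{Z}$ appears as the eigenvalue pair $\pm\sigma$ and $P_{\mathbb{B}}$ becomes the projection of symmetric matrices onto $\{-I\preceq Y\preceq I\}$. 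The index sets $\alpha,\beta,\gamma_1,\gamma_0,c$ of \eqref{abg-index1}-\eqref{abg-index2} sort the singular values of $\overline{Z}$ into the regimes where $h$ is smooth with slope $0$ (on $\alpha$), smooth with slope $1$ (on $\gamma_1$), kinked (on $\beta$, where $\sigma_i(\overline{Z})=1$), or at the left endpoint $0$ of the domain of $\sigma(\cdot)$ (on $\gamma_0$); the matrices $\Omega_1,\Omega_2,\Omega_3$ of \eqref{Omega1}-\eqref{Omega3} are the first divided differences of $h$ (equivalently of $t\mapsto{\rm med}(-1,t,1)$) that govern the derivative of such an operator, the $\sigma_i-\sigma_j$ form ($\Omega_1$) arising from eigenvalue pairs of like sign, the $\sigma_i+\sigma_j$ form ($\Omega_2$) from pairs of opposite sign, and $\Omega_3$ from the coupling with the $n-m$ extra zero eigenvalues, i.e.\ the rectangular $m\times(n-m)$ part. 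On the blocks $\alpha,\gamma_1,c$ the operator is smooth, so the tangent cone to ${\rm gph}\,P_{\mathbb{B}}$ there, and by polarity its regular normal cone, can be read off directly; rewriting in the rotated frame $\widetilde{H}=\overline{U}^{\mathbb{T}}H\overline{V}$, $\widetilde{G}=\overline{U}^{\mathbb{T}}G\overline{V}$ and pulling back along $\Phi^*$ produces the linear relations \eqref{equa1-normal}-\eqref{equa2-normal} and the vanishing conditions $\widetilde{H}_{\beta c}=\widetilde{H}_{\gamma c}=0$, with the operators $\mathcal{S}(\cdot)$ and $\mathcal{X}(\cdot)$ entering because the dilation only sees the symmetric part of a direction along the $\Omega_1$-channel and its skew part along the $\Omega_2$-channel.

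The hard part will be the $\beta\beta$ block, where $\sigma_i(\overline{Z})=1$ and $P_{\mathbb{B}}$ is genuinely non-differentiable: one cannot simply polarize a tangent cone but must take the outer limit of regular normal cones along graph sequences $(Z^k,W^k)\to(\overline{Z},\overline{W})$. Restricted to that block, $P_{\mathbb{B}}$ behaves — up to the orthogonal freedom $Q\in\mathbb{O}^{|\beta|}$ in the SVD of a degenerate sub-block — like the projection onto $\{M\in\mathbb{S}^{|\beta|}:M\preceq I\}$, i.e.\ a shifted $P_{\mathbb{S}_{+}^{|\beta|}}$, so I would import the known description of $\mathcal{N}_{{\rm gph}\,P_{\mathbb{S}_{+}^{k}}}$ of Ding--Sun--Ye type; this is exactly the Hadamard-product identity with $\Xi_1,\Xi_2$ together with $Q_{\beta_0}^{\mathbb{T}}MQ_{\beta_0}\preceq0$ and $Q_{\beta_0}^{\mathbb{T}}NQ_{\beta_0}\succeq0$ in \eqref{equa3-normal}. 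The outer limit then needs three ingredients: as $Z^k\to\overline{Z}$, the singular values forming the $\beta$-cluster split into a sub-cluster $>1$, one $=1$, and one $<1$, producing a partition $(\beta_{+},\beta_0,\beta_{-})\in\mathscr{P}(\beta)$; the divided-difference matrices $D(z^k)$ of \eqref{Ddiff-matrix} converge to the elements of $\mathcal{U}_{|\beta|}$, which gives the forms \eqref{Xi1-matrix}-\eqref{Xi2-matrix}; and the SVD frames $\overline{U}^k,\overline{V}^k$ may rotate within the degenerate block, producing the orbit over $Q\in\mathbb{O}^{|\beta|}$. The two delicate directions will be (i) showing that every limit of regular normals has the stated form — controlling these frame rotations together with the convergence $D(z^k)\to\Xi_1$ — and (ii) conversely realizing each admissible $(Q,\Xi_1,\beta_{+},\beta_0,\beta_{-})$ as a genuine limit. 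A last, more mechanical point is to verify that the one-sided ($\sigma\ge0$) perturbations in the $\gamma_0$ block and their coupling with the extra $n-m$ columns indexed by $c$ contribute no constraints beyond those already recorded; assembling the blocks and pulling back along $\Phi^*$ then yields \eqref{equa1-normal}-\eqref{equa3-normal}.
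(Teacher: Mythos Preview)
The paper does not prove this lemma at all: it is simply quoted from \cite[Theorem~3.2]{LiuPan19} and used as a black box in the proofs of Proposition~\ref{prop-Mspc} and Theorem~\ref{prop-MWrelation}. So there is no ``paper's own proof'' to compare your attempt against.

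That said, your sketch is a coherent and essentially correct outline of how such a result is proved in the source you cite. The reduction via the linear bijection $\Phi(Z,W)=(Z-W,W)$ from ${\rm gph}\,P_{\mathbb{B}}$ to ${\rm gph}\,\partial\|\cdot\|_*$ is right (the adjoint pull-back is what couples $\widetilde{G}$ and $\widetilde{H}$ in \eqref{equa1-normal}-\eqref{equa2-normal}); the identification of $P_{\mathbb{B}}$ as the L\"owner/spectral operator of $h(t)=\min(1,t)$ and the appearance of the divided-difference matrices $\Omega_1,\Omega_2,\Omega_3$ is the standard mechanism; and the treatment of the $\beta\beta$ block by taking outer limits along graph sequences---producing the union over $Q\in\mathbb{O}^{|\beta|}$ and $\Xi_1\in\mathcal{U}_{|\beta|}$ with the partition $(\beta_+,\beta_0,\beta_-)$---is exactly the delicate step. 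Your identification of the two nontrivial directions (controlling the frame rotations together with $D(z^k)\to\Xi_1$, and realizing every admissible $(Q,\Xi_1)$ as an actual limit) is accurate. If you intend to flesh this out, the main technical care is in the $\gamma_0$ block and its coupling with the $c$ columns: the one-sidedness $\sigma\ge 0$ means you are working with a boundary point of the singular-value simplex, and you should check carefully that no extra sign constraints leak into the description beyond what \eqref{equa2-normal} records.
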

 \section{Four classes of stationary points and their relations}\label{sec3}

  To introduce the four classes of stationary points for the problem \eqref{rank-reg},
  with each $\phi\in\!\mathscr{L}$, we write $\widehat{\phi}(t)\!:=\!\phi(|t|)$
  for $t\in\mathbb{R}$ and $\widehat{\Phi}(x)\!:={\textstyle\sum_{i=1}^m}\widehat{\phi}(x_i)$
  for $x\in\mathbb{R}^m$; and with the associated $\psi$, write $\widehat{\psi}(t)\!:=\!\psi(|t|)$
  for $t\in\mathbb{R}$ and $\widehat{\Psi}(x)\!:={\textstyle\sum_{i=1}^m}\widehat{\psi}(x_i)$
  for $x\in\mathbb{R}^m$. Clearly, $\widehat{\Phi}$ and $\widehat{\Psi}$ are
  absolutely symmetric, i.e., $\widehat{\Phi}(Px)=\widehat{\Phi}(x)$
  and $\widehat{\Psi}(Px)=\widehat{\Psi}(x)$ for any $m\times m$
  signed permutation matrix $P$. Also, $\widehat{\Phi}\circ\sigma$
  is globally Lipschitz continuous over the ball $\mathbb{B}$.
  The following equivalent relations are often used in the subsequent analysis
  \begin{subequations}
  \begin{align}\label{XW-equa1}
   \|X\|_*-\langle X,W\rangle=0,\|W\|\le 1
   \Longleftrightarrow W\in\mathop{\arg\max}_{Z\in\mathbb{B}}\,\langle Z,X\rangle\Longleftrightarrow X\in\mathcal{N}_{\mathbb{B}}(W)\quad\ \\
   \label{XW-equa2}
   \qquad\qquad\qquad\qquad\qquad\qquad\quad\Longleftrightarrow W\in\partial\|\cdot\|_*(X)
   \Longleftrightarrow (X,W)\in{\rm gph}\,\partial\|\cdot\|_*
  \end{align}
 \end{subequations}

 \subsection{R-stationary point}\label{sec3.1}

  Recall that $\overline{X}\in\mathbb{R}^{m\times n}$ is a regular critical point of $F$
  if $0\in\widehat{\partial}F(\overline{X})$. Since the rank function is regular
  by \cite[Lemma 2.1]{WuPanBi18} and \cite[Corollary 7.5]{Lewis05}, by combining
  with the assumption on $f$, from \cite[Corollary 10.9]{RW98} we have
  $\widehat{\partial}F(\overline{X})\supseteq\partial\!f(\overline{X})
   +\partial{\rm rank}(\overline{X})+\mathcal{N}_{\Omega}(\overline{X})$.
  In view of this, we introduce the following R-stationary point of the problem \eqref{rank-reg}.
 \begin{definition}\label{Def-Rspc}
  A matrix $\overline{X}\in\mathbb{R}^{m\times n}$ is called a R-stationary
  point of the problem \eqref{rank-reg} if
  \[
     0\in \nu\partial\!f(\overline{X})+\partial{\rm rank}(\overline{X})
     +\mathcal{N}_{\Omega}(\overline{X}).
  \]
 \end{definition}
 \begin{remark}
  Clearly, every R-stationary point of \eqref{rank-reg} is
  a regular critical point of $F$. By the given assumption on $f$
  and \cite[Exercise 10.10]{RW98}, for any $X\in\Omega$ it holds that
  \[
    \partial\!F(X)\subset \nu\partial\!f(X)+\partial({\rm rank}+\delta_{\Omega})(X).
  \]
  Thus, when $\partial({\rm rank}+\delta_{\Omega})(\overline{X})
  \subset \partial{\rm rank}(\overline{X})+\mathcal{N}_{\Omega}(\overline{X})$,
  the limiting critical point of $F$ is same as its regular critical point,
  and coincides with the R-stationary point of \eqref{rank-reg}.
 \end{remark}
 \subsection{M-stationary point}\label{sec3.1}

  By invoking the relation \eqref{XW-equa2}, clearly, the MPEC \eqref{MPEC}
  can be compactly written as
  \begin{equation}\label{EMPEC1}
    \min_{X,W\in\mathbb{R}^{m\times n}}\Big\{\widetilde{F}(X,W):=\nu f(X)+\widehat{\Phi}(\sigma(W))
    +\delta_{\Omega}(X)+\delta_{{\rm gph}\partial\|\cdot\|_*}(X,W)\Big\}.
  \end{equation}
  Moreover, under a suitable constraint qualification (CQ), the following inclusion holds:
  \[
    \partial\widetilde{F}(X,W)\subseteq \big[\nu \partial\!f(X)+\mathcal{N}_{\Omega}(X)\big]\times
    \partial(\widehat{\Phi}\circ\sigma)(W)+\mathcal{N}_{{\rm gph}\partial\|\cdot\|_*}(X,W).
  \]
  Motivated by this, we introduce the M-stationary point of the problem \eqref{rank-reg} as follows.
 \begin{definition}\label{Def-Mspc}
  A matrix $\overline{X}\in\mathbb{R}^{m\times n}$ is called an M-stationary point of
  the problem \eqref{rank-reg} associated to $\phi\in\mathscr{L}$ if there exist
  $\overline{W}\in\partial\|\cdot\|_*(\overline{X})$
  and $\Delta W\in\partial(\widehat{\Phi}\circ\sigma)(\overline{W})$ such that
  \begin{equation}\label{M-equa}
    0\in \nu \partial\!f(\overline{X})+\mathcal{N}_{\Omega}(\overline{X})
    +D^*\partial\|\cdot\|_*(\overline{X}|\overline{W})(\Delta W).
  \end{equation}
 \end{definition}
 \begin{remark}\label{M-Spoint}
  When $\Omega\subseteq\mathbb{S}_{+}^n$, the rank regularized problem \eqref{rank-reg}
  can be reformulated as
  \begin{align}\label{SDPMPEC}
  &\min_{X,W\in\mathbb{S}^{n}}\nu f(X)+{\textstyle\sum_{i=1}^m}\phi(\sigma_i(W))+\delta_{\Omega}(X)\nonumber\\
  &\quad {\rm s.t.}\ \ \langle I-W,X\rangle=0,\,W\in\mathbb{S}_{+}^n,\, I-W\in\mathbb{S}_{+}^n.
 \end{align}
  Notice that $\langle I-W,X\rangle=0,X\in\mathbb{S}_{+}^n,W\in\mathbb{S}_{+}^n$
  and $I-W\in\mathbb{S}_{+}^n$ iff $(X,W\!-I)\in{\rm gph}\mathcal{N}_{\mathbb{S}^n_+}$
  and $W\in\mathbb{S}_{+}^n$. So, for this case, $\overline{X}\in\Omega$
  is an M-stationary point if and only if there exist $(\overline{Y},\Delta Y)\in\mathbb{S}_{+}^n\times\mathbb{S}^n$
  with $\overline{Y}\!-I\in\mathcal{N}_{\mathbb{S}_{+}^n}(\overline{X})$ and
  $\Delta Y\in\partial(\widehat{\Phi}\circ\sigma)(\overline{Y})+\mathcal{N}_{\mathbb{S}_{+}^n}(\overline{Y})$ such that
  \[
    0\in \nu \partial\!f(\overline{X})+\mathcal{N}_{\Omega}(\overline{X})
    +D^*\mathcal{N}_{\mathbb{S}^n_+}(\overline{X}|\overline{Y}\!-I)(-\Delta Y),
  \]
  or equivalently, there exist $\overline{Y}\!\in\mathbb{S}_{+}^n$
  with $\overline{Y}\!-I\in\mathcal{N}_{\mathbb{S}_{+}^n}(\overline{X})$
  and $(\overline{\Gamma}_1,\overline{\Gamma}_2)\in
  \mathbb{S}^n\times\mathbb{S}^{n}$ such that
 \begin{subnumcases}{}\label{MpointR-equa1}
  0\in \nu\partial\!f(\overline{X})+\overline{\Gamma}_1+\mathcal{N}_{\Omega}(\overline{X}), \\
  \label{MpointR-equa2}
  0\in \partial (\widehat{\Phi}\circ\sigma)(\overline{Y})-\overline{\Gamma}_2+\mathcal{N}_{\mathbb{S}^n_+}(\overline{Y}),\\
  \label{MpointR-equa3}
   \overline{\Gamma}_1\in D^*\mathcal{N}_{\mathbb{S}^n_+}(\overline{X}|\overline{Y}\!-I)(-\overline{\Gamma}_2).
  \end{subnumcases}
 \end{remark}

 For this class of stationary points, we have the following proposition
 that is the key to achieve the relation between the M-stationary point
 and the R-stationary point.
 \begin{proposition}\label{prop-Mspc}
  Let $\mathscr{L}_1$ denote the family of those $\phi\in\!\mathscr{L}$
  that is differentiable on $(0,1]$. If $\overline{X}$ is an M-stationary point
  of the problem \eqref{rank-reg} associated to $\phi\in\!\mathscr{L}_1$,
  then there exist $\overline{W}\in\partial\|\cdot\|_*(\overline{X})$ and $\Delta\Gamma\in\nu\partial\!f(\overline{X})+\mathcal{N}_{\Omega}(\overline{X})$
  such that for the index sets $\alpha,\beta,c,\gamma,\gamma_1$ and $\gamma_0$
  defined as in \eqref{abg-index1}-\eqref{abg-index2} with
  $\overline{Z}=\overline{X}+\overline{W}$ and
  $(\overline{U},\overline{V})\in\mathbb{O}^{m,n}(\overline{Z})$,
  \begin{equation}\label{DetaGamma}
    \Delta \Gamma\!=\overline{U}\!\left[\begin{matrix}
    0_{\alpha\alpha}&0_{\alpha\beta}&0_{\alpha\gamma}&0_{\alpha c}\\
    0_{\beta\alpha} &(\Delta\widetilde{\Gamma})_{\beta\beta}
    &(\Delta\widetilde{\Gamma})_{\beta\gamma}&(\Delta\widetilde{\Gamma})_{\beta c}\\
       0_{\gamma\alpha}&(\Delta\widetilde{\Gamma})_{\gamma\beta}
       &(\Delta\widetilde{\Gamma})_{\gamma\gamma}&(\Delta\widetilde{\Gamma})_{\gamma c}
     \end{matrix}\right]\!\overline{V}^{\mathbb{T}}\ \ {\rm for}\
    \Delta\widetilde{\Gamma}\!=\!\overline{U}^{\mathbb{T}}\!\Delta\Gamma\overline{V}, \mathcal{S}[(\Delta\widetilde{\Gamma})_{\beta\beta}]\!=0.
 \end{equation}
 In particular, if $\phi'(t)\ne 0$ for all $t\in(0,1)$, then $\gamma_1=\emptyset$; and
 if $0\notin\partial\widehat{\phi}(0)$, then $\gamma_0=\emptyset$.
 \end{proposition}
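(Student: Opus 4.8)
The plan is to unfold the definition of M-stationarity through Lemma~\ref{coderivative} and then read off the structure of $\Delta\Gamma$ from the algebraic relations there. Concretely, suppose $\overline{X}$ is an M-stationary point associated with $\phi\in\mathscr{L}_1$. By Definition~\ref{Def-Mspc} there are $\overline{W}\in\partial\|\cdot\|_*(\overline{X})$ and $\Delta W\in\partial(\widehat{\Phi}\circ\sigma)(\overline{W})$ with $0\in\nu\partial f(\overline{X})+\mathcal{N}_\Omega(\overline{X})+D^*\partial\|\cdot\|_*(\overline{X}\,|\,\overline{W})(\Delta W)$. Pick $\Delta\Gamma\in\nu\partial f(\overline{X})+\mathcal{N}_\Omega(\overline{X})$ with $-\Delta\Gamma\in D^*\partial\|\cdot\|_*(\overline{X}\,|\,\overline{W})(\Delta W)$; by the definition of the coderivative this means $(-\Delta\Gamma,-\Delta W)\in\mathcal{N}_{\mathrm{gph}\,\partial\|\cdot\|_*}(\overline{X},\overline{W})$. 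First I would fix $(\overline{U},\overline{V})\in\mathbb{O}^{m,n}(\overline{Z})$ with $\overline{Z}=\overline{X}+\overline{W}$, form $\widetilde{\Gamma}_1=-\overline{U}^{\mathbb{T}}\Delta\Gamma\,\overline{V}_1$ and $\widetilde{W}_1=-\overline{U}^{\mathbb{T}}\Delta W\,\overline{V}_1$ (and the full-width analogues), and invoke Lemma~\ref{coderivative} with $(G,H)=(-\Delta\Gamma,-\Delta W)$. This produces the three blocks of relations \eqref{equa1-normal}--\eqref{equa3-normal}.

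Next I would exploit the extra regularity $\phi\in\mathscr{L}_1$ to control $\Delta W$. Since $\widehat{\Phi}\circ\sigma$ is an absolutely symmetric function of the singular values and $\phi$ is differentiable on $(0,1]$, the subdifferential $\partial(\widehat{\Phi}\circ\sigma)(\overline{W})$ has a spectral form: writing $\overline{W}$ via the SVD carried by $(\overline{U},\overline{V})$, any $\Delta W\in\partial(\widehat{\Phi}\circ\sigma)(\overline{W})$ is block-diagonal with respect to that SVD and its diagonal entries on indices with $\sigma_i(\overline{W})\in(0,1)$ are exactly $\phi'(\sigma_i(\overline{W}))$; on the zero-singular-value block one gets $\partial\widehat{\phi}(0)$, and on the $\sigma_i(\overline{W})=1$ block a convex-analysis subgradient of $\phi$ at $1$. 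Feeding this block structure of $\widetilde{W}_1$ into \eqref{equa1-normal}, \eqref{equa2-normal} and \eqref{equa3-normal} and using the explicit $0$-patterns of $\Theta_1,\Theta_2,\Sigma_1,\Sigma_2$ and of $\Omega_3$ on the index partition $\alpha,\beta,\gamma,c$, I would solve for the corresponding blocks of $\widetilde{\Gamma}$. The rows and columns indexed by $\alpha$ are forced to vanish because the $\alpha\alpha,\alpha\beta,\alpha\gamma$ blocks of $\Theta_2,\Sigma_2$ are all $E$ while those of $\Theta_1,\Sigma_1$ are $0$ (together with $(\Omega_3)_{\alpha c}$ being a strictly positive matrix, so \eqref{equa2-normal} pins down $\widetilde{\Gamma}_{\alpha c}$ in terms of $\widetilde{W}_{\alpha c}=0$), yielding the all-zero first block row/column of $\Delta\widetilde{\Gamma}$ in \eqref{DetaGamma}. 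Then, transporting back by $\overline{U}(\cdot)\overline{V}^{\mathbb{T}}$ gives the claimed displayed form, and the symmetry condition $\mathcal{S}[(\Delta\widetilde{\Gamma})_{\beta\beta}]=0$ comes out of the $\beta\beta$-relation in \eqref{equa3-normal}: taking $\mathcal{S}$ of that identity and using $\Xi_2|_{\beta_0\beta_0}=0$, $\Xi_1$ acting only on the skew part, one is left with $\mathcal{S}(\widetilde{\Gamma}_{\beta\beta})$-terms that must cancel, so the symmetric part vanishes.

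Finally, for the two special cases I would look at the $\gamma_1$- and $\gamma_0$-diagonal entries of $\widetilde{W}_1$. If $\phi'(t)\neq 0$ for all $t\in(0,1)$, then on each index $i\in\gamma_1$ the diagonal entry of $\widetilde{W}_1$ is $\phi'(\sigma_i(\overline{W}))\neq 0$; but the $\gamma\gamma$-blocks of $\Theta_1,\Sigma_1$ equal $E$ while those of $\Theta_2,\Sigma_2$ are $0$, so \eqref{equa1-normal} restricted to these diagonal positions forces that entry of $\widetilde{W}_1$ to be $0$, a contradiction — hence $\gamma_1=\emptyset$. Likewise if $0\notin\partial\widehat{\phi}(0)$ then the $\gamma_0$-diagonal entries of $\widetilde{W}_1$ are nonzero, and the same $\gamma_0\gamma_0$ positions in \eqref{equa1-normal} (again $E$ for $\Theta_1,\Sigma_1$, $0$ for $\Theta_2,\Sigma_2$) force them to vanish, giving $\gamma_0=\emptyset$. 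I expect the main obstacle to be the bookkeeping in the second paragraph: correctly matching the spectral block decomposition of $\partial(\widehat{\Phi}\circ\sigma)(\overline{W})$ against the $(\alpha,\beta,\gamma_1,\gamma_0,c)$ partition used in Lemma~\ref{coderivative}, keeping track of which blocks are free and which are determined, and in particular handling the $\beta\beta$-block where the union over $Q\in\mathbb{O}^{|\beta|}$ and $\Xi_1\in\mathcal{U}_{|\beta|}$ appears — there one must argue that regardless of the active branch, the symmetric part of $(\Delta\widetilde{\Gamma})_{\beta\beta}$ is annihilated.
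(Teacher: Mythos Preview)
Your overall strategy---unwind Definition~\ref{Def-Mspc}, apply Lemma~\ref{coderivative} to $(-\Delta\Gamma,-\Delta W)$, and read off block constraints---is exactly the paper's. However, two of the steps you sketch do not go through as written.

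\medskip
\textbf{The $\alpha$-row vanishing.} Your claim that ``the $\alpha\alpha,\alpha\beta,\alpha\gamma$ blocks of $\Theta_2,\Sigma_2$ are all $E$ while those of $\Theta_1,\Sigma_1$ are $0$'' is false: for instance $(\Sigma_2)_{\alpha\alpha}=E_{\alpha\alpha}-(\Omega_2)_{\alpha\alpha}$ has entries strictly in $(0,1)$, and $(\Theta_2)_{\alpha\gamma}=E_{\alpha\gamma}-(\Omega_1)_{\alpha\gamma}$, $(\Theta_1)_{\alpha\gamma}=(\Omega_1)_{\alpha\gamma}$ are both nonzero. The paper gets $(\Delta\widetilde{\Gamma}_1)_{\alpha\alpha}=0$ by rewriting the $\alpha\alpha$ equation as $(E+\Sigma_2)_{\alpha\alpha}\circ(\Delta\widetilde{\Gamma}_1)_{\alpha\alpha}+(E-\Sigma_2)_{\alpha\alpha}\circ(\Delta\widetilde{\Gamma}_1^{\mathbb{T}})_{\alpha\alpha}=0$ and using that the coefficients are strictly positive; the off-diagonal $\alpha\beta$ (and $\alpha\gamma$) blocks require pairing the $\alpha\beta$ and $\beta\alpha$ equations, transposing one, and eliminating. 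So you need the $\widetilde{W}$ blocks to drop out first (they do, because $\Delta\widetilde{W}_1$ is diagonal on $\alpha\cup\beta\cup\gamma_1$ once you relate the Lewis SVD of $\overline{W}$ to $(\overline{U},\overline{V})$---a point you gloss over but which the paper handles via \cite{DingST14}), and then a genuine $2\times 2$ elimination rather than a direct read-off.

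\medskip
\textbf{The $\beta\beta$ block.} Your proposed mechanism (``take $\mathcal{S}$, use $\Xi_2|_{\beta_0\beta_0}=0$, $\Xi_1$ acting on the skew part'') misidentifies the structure of \eqref{equa3-normal}: $\Xi_1$ acts on the full $\widehat{N}$, not its skew part, and $\Xi_2|_{\beta_0\beta_0}$ is in fact $0$ but that alone does not kill $\mathcal{S}(\widehat{M})$ elsewhere. The actual lever is that $\phi\in\mathscr{L}_1$ gives $(\Delta\widetilde{W})_{\beta\beta}=\phi'(1)I$ with $\phi'(1)>0$ by \eqref{phi-assump}. The sign constraint $Q_{\beta_0}^{\mathbb{T}}(\Delta\widetilde{W})_{\beta\beta}Q_{\beta_0}\preceq 0$ in \eqref{equa3-normal} then forces $\beta_0=\emptyset$; and since $Q^{\mathbb{T}}\phi'(1)IQ\succ 0$, the remaining equation $\Xi_1\circ(\phi'(1)I)+\Xi_2\circ\mathcal{S}[Q^{\mathbb{T}}(\Delta\widetilde{\Gamma})_{\beta\beta}Q]=0$ together with the explicit forms \eqref{Xi1-matrix}--\eqref{Xi2-matrix} forces $\beta_{-}=\emptyset$, after which $\Xi_2=E$ and $\mathcal{S}[(\Delta\widetilde{\Gamma})_{\beta\beta}]=0$ follows. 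Without invoking $\phi'(1)>0$ and these two successive eliminations of $\beta_0$ and $\beta_{-}$, the union over $Q$ and $\Xi_1$ cannot be resolved.

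\medskip
Your treatment of the $\gamma_1,\gamma_0$ conclusions is essentially correct: the $\gamma\gamma$ blocks of $\Theta_1,\Sigma_1,\Theta_2,\Sigma_2$ are indeed $E,E,0,0$, so \eqref{equa1-normal} yields $(\Delta\widetilde{W})_{\gamma\gamma}=0$, and combining with the diagonal values $\phi'(\sigma_i(\overline{Z}))$ on $\gamma_1$ and $\partial\widehat{\phi}(0)$ on $\gamma_0$ gives the stated implications.
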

 \begin{proof}
  Let $\overline{X}$ be an M-stationary point of the problem \eqref{rank-reg}
  associated to $\phi\in\!\mathscr{L}_1$. By Definition \ref{Def-Mspc},
  there exist $\overline{W}\in\partial\|\cdot\|_*(\overline{X})$
  and $\Delta W\in\partial(\widehat{\Phi}\circ\sigma)(\overline{W})$ such that
  \eqref{M-equa} holds. So, there exists $\Delta\Gamma\in\nu \partial f(\overline{X})+\mathcal{N}_{\Omega}(\overline{X})$
  such that $-\Delta\Gamma\in D^*\partial\|\cdot\|_*(\overline{X}|\overline{W})(\Delta W)$.
  We argue that $\Delta\Gamma$ has the form of \eqref{DetaGamma}.
  Since $(\overline{U},\overline{V})\in\mathbb{O}^{m,n}(\overline{Z})$,
  from $\overline{W}\in\partial\|\cdot\|_*(\overline{X})$,
  \begin{subequations}
  \begin{align}\label{XbarSVD}
  \overline{X}&=\overline{U}\left[\begin{matrix}
                   {\rm Diag}(\sigma_{\alpha}(\overline{Z})\!-\!e_{\alpha})& 0_{\alpha\beta}&0_{\alpha\gamma}&0_{\alpha c}\\
                   0_{\beta\alpha} &  0_{\beta\beta} &  0_{\beta\gamma}&0_{\beta c}\\
                   0_{\gamma\alpha}&0_{\gamma\beta}  & 0_{\gamma\gamma}&0_{\gamma c}
                   \end{matrix}\right]\overline{V}^{\mathbb{T}},\\
  \label{YbarSVD}
  \overline{W}&=\overline{U}\left[\begin{matrix}
                   I_{\alpha\alpha}& 0_{\alpha\beta}&0_{\alpha\gamma}&0_{\alpha c}\\
                   0_{\beta\alpha} & I_{\beta\beta} &  0_{\beta\gamma}&0_{\beta c}\\
                   0_{\gamma\alpha}&0_{\gamma\beta}  & {\rm Diag}(\sigma_{\gamma}(\overline{Z}))&0_{\gamma c}
                   \end{matrix}\right]\overline{V}^{\mathbb{T}}.
  \end{align}
 \end{subequations}
 Since $\widehat{\Phi}$ is absolutely symmetric and $\Delta W\!\in\partial(\widehat{\Phi}\circ\sigma)(\overline{W})$,
 by \cite[Corollary 2.5]{Lewis95} and equation \eqref{YbarSVD} there exist
 $(\widehat{U},\widehat{V})\in\mathbb{O}^{m,n}(\overline{W})$ and
 $\overline{w}\in\partial\widehat{\Phi}(\sigma(\overline{W}))$ such that
 \begin{equation}\label{DetaW}
   \Delta W=\widehat{U}[{\rm Diag}(\overline{w})\ \ 0]\widehat{V}^{\mathbb{T}}.
 \end{equation}
 From $\overline{w}\in\partial\widehat{\Phi}(\sigma(\overline{W}))$,
 the definition of $\widehat{\Phi}$ and equation \eqref{YbarSVD}, it follows that
 \begin{equation}\label{equa-barw}
   \overline{w}_i=\phi'(1)\ {\rm for}\ i\in\alpha\cup\beta;\ \
   \overline{w}_i=\phi'(\sigma_{i}(\overline{Z}))\ {\rm for}\ i\in\gamma_1;\ \
   \overline{w}_i\in\partial\widehat{\phi}(0)\ {\rm for}\ i\in\gamma_0.
 \end{equation}
 Without loss of generality, we assume that the matrix $\overline{Z}$ has $r$ distinct
 singular values belonging to $(0,1)$. Let $\overline{\mu}_1>\overline{\mu}_2>\cdots>\overline{\mu}_{r}$
 be the $r$ distinct singular values and write
 \[
   a_k\!:=\big\{i\in\gamma_1\ |\ \sigma_i(\overline{Z})\!=\overline{\mu}_k\big\}\ \ {\rm for}\ \
   k=1,2,\ldots,r.
 \]
 Since $(\widehat{U},\widehat{V})\in\mathbb{O}^{m,n}(\overline{W})$,
 from equation \eqref{YbarSVD} and \cite[Proposition 5]{DingST14},
 there exist a block diagonal matrix $\widehat{Q}={\rm Diag}(Q_0,Q_1,\ldots,Q_r)$
 with $Q_0\in\mathbb{O}^{|\alpha|+|\beta|}$ and $Q_k\in \mathbb{O}^{|a_k|}$
 for $k=1,2,\ldots,r$, and orthogonal matrices $Q'\in \mathbb{O}^{|\gamma_0|}$ and
 $Q''\in \mathbb{O}^{|\gamma_0\cup c|}$ such that
 \[
   \widehat{U}=\overline{U}\left[\begin{matrix}
              \widehat{Q} &0\\
              0 & Q'
              \end{matrix}\right]
   \ \ {\rm and}\ \
  \widehat{V}=\overline{V}\left[\begin{matrix}
   \widehat{Q}&0\\
    0 & Q''
    \end{matrix}\right].
 \]
 Together with equations \eqref{DetaW} and \eqref{equa-barw}, it is not difficult to obtain that
 \begin{equation*}
   \Delta W=\overline{U}
   \left[\begin{matrix}
      {\rm Diag}(\overline{w}_{\alpha\cup\beta\cup\gamma_1}) & 0\\
      0 &Q'{\rm Diag}(\overline{w}_{\gamma_0})(Q_{\gamma_0}'')^{\mathbb{T}}
      \end{matrix}\right]\overline{V}^{\mathbb{T}},
 \end{equation*}
 and consequently
 \begin{equation}\label{detaW-equa1}
  \Delta\widetilde{W}\!:=\overline{U}^{\mathbb{T}}\Delta W\overline{V}
   =\left[\begin{matrix}
      {\rm Diag}(\overline{w}_{\alpha\cup\beta\cup\gamma_1}) & 0\\
      0 &Q'{\rm Diag}(\overline{w}_{\gamma_0})(Q_{\gamma_0}'')^{\mathbb{T}}
      \end{matrix}\right].
 \end{equation}
 Since $(-\Delta\Gamma,-\Delta W)\in\!\mathcal{N}_{{\rm gph}\,\partial \|\cdot\|_*}(\overline{X},\overline{W})$,
 by equation \eqref{equa1-normal}-\eqref{equa2-normal} of Lemma \ref{coderivative}, we get
 \begin{subequations}
  \begin{align}\label{normal1-equa1}
  \Theta_1\circ\mathcal{S}(\Delta\widetilde{W}_1)+\Theta_2\circ \mathcal{S}(\Delta\widetilde{\Gamma}_1)
    +\Sigma_1\circ \mathcal{X}(\Delta\widetilde{W}_1)+\Sigma_2\circ \mathcal{X}(\Delta\widetilde{\Gamma}_1)=0,\\
    \label{normal1-equa2}
   (\Delta\widetilde{\Gamma})_{\alpha c}+(\Omega_3)_{\alpha c}\circ
   [(\Delta\widetilde{W})_{\alpha c}-(\Delta\widetilde{\Gamma})_{\alpha c}]=0,\ (\Delta\widetilde{W})_{\beta c}=0,\
   \ (\Delta\widetilde{W})_{\gamma c}=0
 \end{align}
 \end{subequations}
 where $\Delta\widetilde{\Gamma}_1\!:=\overline{U}^{\mathbb{T}}\Delta\Gamma\overline{V}_{\!\alpha\cup\beta\cup\gamma},
 \Delta\widetilde{W}_1\!:=\overline{U}^{\mathbb{T}}\Delta W\overline{V}_{\!\alpha\cup\beta\cup\gamma}$,
 and the matrices $\Theta_1,\Theta_2,\Sigma_1$ and $\Sigma_2$ are defined as in Section \ref{sec2.3}.
 Notice that $[\Delta\widetilde{W}_1]_{\alpha\cup\beta\cup\gamma_1,\alpha\cup\beta\cup\gamma_1}$
 is a diagonal matrix by equation \eqref{detaW-equa1}. Together with \eqref{normal1-equa1}-\eqref{normal1-equa2}
 and \eqref{Omega1}-\eqref{Omega3}, it follows that
  \begin{subequations}
  \begin{align}\label{normal2-equa1}
  (\Delta \widetilde{W})_{\alpha c}=0,\,(\Delta \widetilde{\Gamma})_{\alpha c}=0,\,
  (\Delta \widetilde{W})_{\gamma\gamma}=0,\qquad\\
  \label{normal2-equa11}
  [\mathcal{S}(\Delta\widetilde{\Gamma}_1)]_{\alpha\alpha}
  +(\Sigma_2)_{\alpha\alpha}\circ[\mathcal{X}(\Delta\widetilde{\Gamma}_1)]_{\alpha\alpha}=0,\qquad\\
  \label{normal2-equa2}
  (\Theta_2)_{\alpha\beta}\circ [\mathcal{S}(\Delta\widetilde{\Gamma}_1)]_{\alpha\beta}
  +(\Sigma_2)_{\alpha\beta}\circ [\mathcal{X}(\Delta\widetilde{\Gamma}_1)]_{\alpha\beta}=0,
  \\\label{normal2-equa3}
  (\Theta_2)_{\beta\alpha}\circ [\mathcal{S}(\Delta\widetilde{\Gamma}_1)]_{\beta\alpha}
  +(\Sigma_2)_{\beta\alpha}\circ[\mathcal{X}(\Delta\widetilde{\Gamma}_1)]_{\beta\alpha}=0,\\
  \label{normal2-equa4}
  (\Theta_2)_{\alpha\gamma}\circ[\mathcal{S}(\Delta\widetilde{\Gamma}_1)]_{\alpha\gamma}
  +(\Sigma_2)_{\alpha\gamma}\circ[\mathcal{X}(\Delta\widetilde{\Gamma}_1)]_{\alpha\gamma}=0,\\
  \label{normal2-equa5}
  (\Theta_2)_{\gamma\alpha}\circ[\mathcal{S}(\Delta\widetilde{\Gamma}_1)]_{\gamma\alpha}
  +(\Sigma_2)_{\gamma\alpha}\circ[\mathcal{X}(\Delta\widetilde{\Gamma}_1)]_{\gamma\alpha}=0.
  \end{align}
 \end{subequations}
 Notice that \eqref{normal2-equa11} is equivalent to
 \(
   (E+\Sigma_2)_{\alpha\alpha}(\Delta\widetilde{\Gamma}_1)_{\alpha\alpha}
   +(E-\Sigma_2)_{\alpha\alpha}(\Delta\widetilde{\Gamma}_1^{\mathbb{T}})_{\alpha\alpha}=0
 \)
 which, by the fact that the entries of $\Sigma_{2}$ belongs to $(0,1)$,
 implies that $(\Delta\widetilde{\Gamma}_1)_{\alpha\alpha}=0$.
 Notice that equations (\ref{normal2-equa2}) and (\ref{normal2-equa3}) can be equivalently written as
  \begin{subequations}
  \begin{align}\label{normal3-equa1}
  (\Theta_2+\Sigma_2)_{\alpha\beta}\circ (\Delta\widetilde{\Gamma}_1)_{\alpha\beta}=(\Sigma_2-\Theta_2)_{\alpha\beta}\circ (\Delta\widetilde{\Gamma}_1^{\mathbb{T}})_{\alpha\beta},\\
  \label{normal3-equa2}
  (\Theta_2+\Sigma_2)_{\beta\alpha}\circ (\Delta\widetilde{\Gamma}_1)_{\beta\alpha}=(\Sigma_2-\Theta_2)_{\beta\alpha}\circ (\Delta\widetilde{\Gamma}_1^{\mathbb{T}})_{\beta\alpha}.
  \end{align}
 \end{subequations}
  Since $[(\Delta\widetilde{\Gamma}_1)_{\beta\alpha}]^{\mathbb{T}}=(\Delta\widetilde{\Gamma}_1^{\mathbb{T}})_{\alpha\beta}$
  and $[(\Delta\widetilde{\Gamma}_1^{\mathbb{T}})_{\beta\alpha}]^{\mathbb{T}}=(\Delta\widetilde{\Gamma}_1)_{\alpha\beta}$,
  by imposing the transpose to the both sides of equality (\ref{normal3-equa2}) we immediately obtain that
  \[
  (\Delta\widetilde{\Gamma}_1^{\mathbb{T}})_{\alpha\beta}
  =\big[(\Sigma_2-\Theta_2)_{\alpha\beta}\oslash(\Theta_2+\Sigma_2)_{\alpha\beta}\big]
    \circ (\Delta\widetilde{\Gamma}_1)_{\alpha\beta}
  \]
  where ``$\oslash$'' denotes the entries division operator of two matrices.
  Substituting this equality into \eqref{normal3-equa1} yields that
  $(\Delta\widetilde{\Gamma}_1)_{\alpha\beta}=0$,
  and then $(\Delta\widetilde{\Gamma}_1)_{\beta\alpha}=0$. Similarly, from \eqref{normal2-equa4}
  and \eqref{normal2-equa5}, we can obtain $(\Delta\widetilde{\Gamma}_1)_{\alpha\gamma}=0$ and
  $(\Delta\widetilde{\Gamma}_1)_{\beta\gamma}=0$. Thus,
  \[
    \overline{U}^{\mathbb{T}}\Delta\Gamma \overline{V}=\Delta\widetilde{\Gamma}
    =\left[\begin{matrix}
       0_{\alpha\alpha}&0_{\alpha\beta}&0_{\alpha\gamma}&0_{\alpha c}\\
       0_{\beta\alpha} &(\Delta\widetilde{\Gamma})_{\beta\beta}
       &(\Delta\widetilde{\Gamma})_{\beta\gamma}&(\Delta\widetilde{\Gamma})_{\beta c}\\
       0_{\gamma\alpha}&(\Delta\widetilde{\Gamma})_{\gamma\beta}
       &(\Delta\widetilde{\Gamma})_{\gamma\gamma}&(\Delta\widetilde{\Gamma})_{\gamma c}
     \end{matrix}\right].
  \]
  Thus, to complete the proof of the first part, we only need to argue that
  $\mathcal{S}[(\Delta\widetilde{\Gamma})_{\beta\beta}]=0$.
  Since $(-\Delta\Gamma,-\Delta W)\in\!\mathcal{N}_{{\rm gph}\,\partial \|\cdot\|_*}(\overline{X},\overline{W})$,
  by \eqref{equa3-normal} there exist $Q\in\mathbb{O}^{|\beta|}$ and $\Xi_1\in\mathcal{U}_{|\beta|}$
  having the form \eqref{Xi1-matrix} for some partition $(\beta_{+},\beta_{0},\beta_{-})$
  of $\beta$ such that
  \begin{align}\label{normal4-equa1}
    \Xi_1\circ Q^{\mathbb{T}}(\Delta\widetilde{W})_{\beta\beta}Q
    +\Xi_2\circ \mathcal{S}\big[Q^{\mathbb{T}}(\Delta\widetilde{\Gamma})_{\beta\beta}Q\big]
    +\Xi_2\circ \mathcal{X}\big[Q^{\mathbb{T}}(\Delta\widetilde{W})_{\beta\beta}Q\big]=0,\\
    Q_{\beta_0}^{\mathbb{T}}(\Delta\widetilde{\Gamma})_{\beta\beta}Q_{\beta_0}\succeq 0,\
    Q_{\beta_0}^{\mathbb{T}}(\Delta\widetilde{W})_{\beta\beta}Q_{\beta_0}\preceq 0
    \qquad\qquad\qquad
  \label{normal4-equa2}
  \end{align}
  where the matrix $\Xi_2$ associated with $\Xi_1$ has the form of \eqref{Xi2-matrix}.
  From \eqref{detaW-equa1} and the first equality in \eqref{equa-barw},
  $(\Delta\widetilde{W})_{\beta\beta}={\rm Diag}(\overline{w}_{\beta})=\phi'(1)I$.
  Notice that $\phi'(1)>0$ by \eqref{phi-assump}. We deduce
  $\beta_0=\emptyset$ from the second inequality of \eqref{normal4-equa2}.
  Since $\mathcal{X}[Q^{\mathbb{T}}(\Delta\widetilde{W})_{\beta\beta}Q]=0$,
  \eqref{normal4-equa1} reduces to
  \[
    \Xi_1\circ(Q^{\mathbb{T}}{\rm Diag}(\overline{w}_{\beta})Q)
    +\Xi_2\circ \mathcal{S}\big[Q^{\mathbb{T}}(\Delta\widetilde{\Gamma})_{\beta\beta}Q\big]=0.
  \]
  Since $Q^{\mathbb{T}}{\rm Diag}(\overline{w}_{\beta})Q\succ 0$, by using the expressions of
  $\Xi_1$ and $\Xi_2$ we have $\beta_{-}=\emptyset$, and then the last equality
  reduces to $0=\mathcal{S}\big[Q^{\mathbb{T}}(\Delta\widetilde{\Gamma})_{\beta\beta}Q\big]
  =\mathcal{S}[(\Delta\widetilde{\Gamma})_{\beta\beta}]$. Thus,
  we complete the proof of the first part. By combining $(\Delta \widetilde{W})_{\gamma\gamma}=0$
  with \eqref{detaW-equa1} and \eqref{equa-barw}, it is easy to see that
  if $\phi'(t)\ne 0$ for any $t\in(0,1)$, then $\gamma_1=\emptyset$;
  and if $0\notin\partial\widehat{\phi}(0)$, then $\gamma_0=\emptyset$.
 \end{proof}

 Now we state the relation between the M-stationary point and
 the R-stationary point.
 \vspace{-0.5cm}
 \begin{theorem}\label{prop-MWrelation}
  If $\overline{X}$ is an M-stationary point of the problem \eqref{rank-reg}
  associated to $\phi\in\!\mathscr{L}_1$, then it is also a R-stationary point.
  Conversely, if $\overline{X}$ is a R-stationary point of \eqref{rank-reg},
  then it is an M-stationary point associated to those $\phi\in\!\mathscr{L}$
  with $0\in\partial\widehat{\phi}(0)$.
 \end{theorem}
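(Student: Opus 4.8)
The plan is to prove the two implications of Theorem \ref{prop-MWrelation} separately: the forward one rests on Proposition \ref{prop-Mspc}, and the converse on the coderivative description in Lemma \ref{coderivative}.

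\emph{From M-stationary to R-stationary.} Suppose $\overline{X}$ is an M-stationary point associated with some $\phi\in\mathscr{L}_1$. Proposition \ref{prop-Mspc} then yields $\overline{W}\in\partial\|\cdot\|_*(\overline{X})$ and $\Delta\Gamma\in\nu\partial\!f(\overline{X})+\mathcal{N}_{\Omega}(\overline{X})$ such that, in a pair $(\overline{U},\overline{V})\in\mathbb{O}^{m,n}(\overline{Z})$ with $\overline{Z}=\overline{X}+\overline{W}$ and $\alpha=\{i:\sigma_i(\overline{Z})>1\}$, the matrix $\overline{U}^{\mathbb{T}}\Delta\Gamma\overline{V}$ has zero $\alpha$-rows and zero $\alpha$-columns; this is exactly \eqref{DetaGamma}. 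On the other hand, $\overline{W}\in\partial\|\cdot\|_*(\overline{X})$ together with $(\overline{U},\overline{V})\in\mathbb{O}^{m,n}(\overline{Z})$ forces $\overline{X}=\overline{U}_{\alpha}{\rm Diag}(\sigma_{\alpha}(\overline{Z})-e_{\alpha})\overline{V}_{\alpha}^{\mathbb{T}}$, so ${\rm rank}(\overline{X})=|\alpha|$ with leading singular-vector pair $(\overline{U}_{\alpha},\overline{V}_{\alpha})$; hence the subdifferential formula for the rank function (see, e.g., \cite{WuPanBi18}) gives
\[
  \partial{\rm rank}(\overline{X})=\big\{M\in\mathbb{R}^{m\times n}\ \big|\ \overline{U}_{\alpha}^{\mathbb{T}}M=0,\ M\overline{V}_{\alpha}=0\big\}.
\]
Comparing the two descriptions shows $\Delta\Gamma\in\partial{\rm rank}(\overline{X})$; since $\partial{\rm rank}(\overline{X})$ is a linear subspace, $-\Delta\Gamma$ lies in it as well, and writing $\Delta\Gamma=g+w$ with $g\in\nu\partial\!f(\overline{X})$, $w\in\mathcal{N}_{\Omega}(\overline{X})$ yields $0=g+w+(-\Delta\Gamma)\in\nu\partial\!f(\overline{X})+\partial{\rm rank}(\overline{X})+\mathcal{N}_{\Omega}(\overline{X})$, i.e.\ $\overline{X}$ is R-stationary.

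\emph{From R-stationary to M-stationary.} Let $\overline{X}$ be R-stationary with SVD $\overline{X}=U[{\rm Diag}(\sigma(\overline{X}))\ \ 0]V^{\mathbb{T}}$ and $r={\rm rank}(\overline{X})$, and fix $\Delta\Gamma\in\nu\partial\!f(\overline{X})+\mathcal{N}_{\Omega}(\overline{X})$ with $-\Delta\Gamma\in\partial{\rm rank}(\overline{X})$. I would choose $\overline{W}:=U_1V_1^{\mathbb{T}}$, which by \eqref{subdiff-nnorm} (take $Z=0$) lies in $\partial\|\cdot\|_*(\overline{X})$, together with $\Delta W:=s\,\overline{W}$ for an arbitrary $s\in\partial\widehat{\phi}(1)=\partial\phi(1)$. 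Since $\sigma(\overline{W})=(1,\dots,1,0,\dots,0)$ with $r$ ones, and $0\in\partial\widehat{\phi}(0)$ by hypothesis, \cite[Corollary 2.5]{Lewis95} confirms that $\Delta W=\widehat{U}[{\rm Diag}(\overline{w})\ 0]\widehat{V}^{\mathbb{T}}\in\partial(\widehat{\Phi}\circ\sigma)(\overline{W})$ for $\overline{w}=(s,\dots,s,0,\dots,0)$ and any $(\widehat{U},\widehat{V})\in\mathbb{O}^{m,n}(\overline{W})$. It remains to verify $(-\Delta\Gamma,-\Delta W)\in\mathcal{N}_{{\rm gph}\,\partial\|\cdot\|_*}(\overline{X},\overline{W})$ via Lemma \ref{coderivative}. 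For $\overline{Z}=\overline{X}+\overline{W}$ one has $\beta=\gamma_1=\emptyset$, $\alpha=\{1,\dots,r\}$, $\gamma=\gamma_0=\{r+1,\dots,m\}$, $c=\{m+1,\dots,n\}$, and $(U,V)\in\mathbb{O}^{m,n}(\overline{Z})$; in this basis $\overline{U}^{\mathbb{T}}\Delta W\overline{V}$ is supported only on the $\alpha\alpha$-block (there equal to $sI_{|\alpha|}$), while $\overline{U}^{\mathbb{T}}\Delta\Gamma\overline{V}$ has vanishing $\alpha$-rows and $\alpha$-columns because $-\Delta\Gamma\in\partial{\rm rank}(\overline{X})$. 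A term-by-term inspection of \eqref{equa1-normal}--\eqref{equa3-normal} is then immediate: \eqref{equa3-normal} is vacuous since $\beta=\emptyset$; in \eqref{equa2-normal} each matrix entry involved sits in a $c$-column, where both $\overline{U}^{\mathbb{T}}\Delta\Gamma\overline{V}$ and $\overline{U}^{\mathbb{T}}\Delta W\overline{V}$ vanish; and in \eqref{equa1-normal} the skew part contributed by $\Delta W$ is zero, whereas the Hadamard weights $\Theta_1,\Theta_2,\Sigma_2$ carry zeros on precisely the $\alpha\alpha$- and $\gamma\gamma$-blocks that are the only support of the three remaining summands. Thus $-\Delta\Gamma\in D^*\partial\|\cdot\|_*(\overline{X}|\overline{W})(\Delta W)$, and combined with $\Delta\Gamma\in\nu\partial\!f(\overline{X})+\mathcal{N}_{\Omega}(\overline{X})$ and $\Delta W\in\partial(\widehat{\Phi}\circ\sigma)(\overline{W})$ this is exactly \eqref{M-equa}, so $\overline{X}$ is an M-stationary point.

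The genuinely nontrivial ingredients are already isolated in Proposition \ref{prop-Mspc} and Lemma \ref{coderivative}; the only delicate design choice is the pair $(\overline{W},\Delta W)$ in the converse. Taking $\Delta W$ a scalar multiple of $\overline{W}=U_1V_1^{\mathbb{T}}$ is what forces $\overline{U}^{\mathbb{T}}\Delta W\overline{V}$ to live entirely on the $\alpha\alpha$-block of the singular-vector basis of $\overline{Z}$, precisely where the Hadamard weight $\Theta_1$ (and $\Theta_2$, $\Sigma_2$) vanish, so that the normal-cone equations of Lemma \ref{coderivative} collapse to $0=0$; a choice of $\overline{W}$ with small nonzero singular values would reintroduce the $\gamma_1$-block and with it a nontrivial system to solve. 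The only other bookkeeping is to reconcile the block form \eqref{DetaGamma}, written in the basis of $\overline{Z}$, with $\partial{\rm rank}(\overline{X})$, which is transparent once one notes that $\overline{X}$ and $\overline{Z}$ share the pair $(\overline{U},\overline{V})$ and that $\overline{X}$ is supported on the $\alpha$-block.
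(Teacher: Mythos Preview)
Your proof is correct and follows essentially the same route as the paper: Proposition \ref{prop-Mspc} for the forward implication and the coderivative description of Lemma \ref{coderivative} for the converse, with the same choice $\overline{W}=U_1V_1^{\mathbb{T}}$ and a diagonal $\Delta W$. Two small remarks: (i) in your check of \eqref{equa2-normal} the claim that $\overline{U}^{\mathbb{T}}\Delta\Gamma\overline{V}$ vanishes on all $c$-columns is not true in general (the $(\gamma,c)$-block of $\Delta\widetilde\Gamma$ need not vanish), but \eqref{equa2-normal} only involves $(\widetilde G)_{\alpha c}$, which is indeed zero because the $\alpha$-rows vanish, so the conclusion stands; (ii) the paper additionally splits the converse into the cases $t^*=0$ and $t^*\neq 0$, using $\overline{W}=U_1V_1^{\mathbb{T}}+t^*U_2V_2^{\mathbb{T}}$ in the latter, but under the standing hypothesis $0\in\partial\widehat{\phi}(0)$ your single construction already covers both cases, so your argument is in fact a slight streamlining.
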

 \begin{proof}
  Let $\overline{X}$ be an M-stationary point of \eqref{rank-reg}
  associated to $\phi\in\!\mathscr{L}_1$. By Proposition \ref{prop-Mspc},
  there exist $\overline{W}\in\partial\|\cdot\|_*(\overline{X})$ and
  $\Delta\Gamma\in\nu\partial\!f(\overline{X})+\mathcal{N}_{\Omega}(\overline{X})$
  such that for the index sets $\alpha,\beta,c,\gamma,
  \gamma_1,\gamma_0$ defined as in \eqref{abg-index1}-\eqref{abg-index2}
  with $\overline{Z}=\overline{X}+\overline{W}$ and
  $(\overline{U},\overline{V})\in\mathbb{O}^{m,n}(\overline{Z})$,
  the matrix $\Delta\Gamma$ takes the form of \eqref{DetaGamma}. Let
  \(
  \Delta\widetilde{Z}=
  \left[\begin{matrix}
   (\Delta\widetilde{\Gamma})_{\beta\beta} &(\Delta\widetilde{\Gamma})_{\beta\gamma}
   &(\Delta\widetilde{\Gamma})_{\beta c}\\
  (\Delta\widetilde{\Gamma})_{\gamma\beta}&(\Delta\widetilde{\Gamma})_{\gamma\gamma}
  &(\Delta\widetilde{\Gamma})_{\gamma c}
  \end{matrix}\right].
  \)
  Take $(P,P')\in\mathbb{O}^{m-|\alpha|,n-|\alpha|}(\Delta\widetilde{Z})$.
  Write $\widetilde{U}=[\overline{U}_{\alpha}\ \ \overline{U}_{\!\beta\cup\gamma}P]$
  and $\widetilde{V}=[\overline{V}_{\alpha}\ \ \overline{V}_{\!\beta\cup\gamma\cup c}P']$.
  Then,
  \[
    \Delta\Gamma=\widetilde{U}\left[\begin{matrix}
       0_{\alpha\alpha}& 0_{\alpha,\beta\cup\gamma} & 0_{\alpha c}\\
       0_{\beta\cup\gamma,\alpha} & {\rm Diag}(\sigma(\Delta\widetilde{Z}))&0_{\beta\cup\gamma,c}
     \end{matrix}\right]\widetilde{V}^{\mathbb{T}}.
  \]
  By the definitions of $\widetilde{U}$ and $\widetilde{V}$ and \eqref{XbarSVD},
  it is easy to check that $(\widetilde{U},\widetilde{V})\in\mathbb{O}^{m,n}(\overline{X})$.
  Notice that ${\rm rank}(\overline{X})=|\alpha|$. From \cite[Theorem 4]{Le13},
  we have $-\Delta\Gamma\in \partial {\rm rank}(\overline{X})$.
  Thus, $0\in \partial {\rm rank}(\overline{X})+\nu\partial\!f(\overline{X})+\mathcal{N}_{\Omega}(\overline{X})$.
  From Definition \ref{Def-Rspc}, $\overline{X}$ is a R-stationary point.

  \medskip

  Now let $\overline{X}$ be a R-stationary point of \eqref{rank-reg}
  with ${\rm rank}(\overline{X})=\overline{r}$. Suppose that $\overline{r}>1$.
  Take $\phi\in\!\mathscr{L}$ with $0\in\partial\widehat{\phi}(0)$.
  By Definition \ref{Def-Rspc}, there is
  $\Delta\Gamma\in\!\nu\partial\!f(\overline{X})+\mathcal{N}_{\Omega}(\overline{X})$
  such that $-\Delta\Gamma\in \partial {\rm rank}(\overline{X})$.
  Along with \cite[Theorem 4]{Le13}, there exists
  $(\overline{U},\overline{V})\in\mathbb{O}^{m,n}(\overline{X})$ such that
  \[
    -\Delta\Gamma=\overline{U}[{\rm Diag}(\overline{x})\ \ 0]\overline{V}^{\mathbb{T}}
    \ \ {\rm with}\ \ \overline{x}_i=0\ \ {\rm for}\ \ i=1,2,\ldots,\overline{r}.
  \]
  Next we proceed the arguments by $t^*=0$ and $t^*\ne 0$, where
  $t^*$ is same as in \eqref{phi-assump}.

  \medskip
  \noindent
  {\bf Case 1:} $t^*=0$. Take $\overline{W}:=\overline{U}_1\overline{V}_1^{\mathbb{T}}$,
  where $\overline{U}_1$ and $\overline{V}_1$ are the matrix consisting of the first
  $\overline{r}$ columns of $\overline{U}$ and $\overline{V}$, respectively. Clearly,
  $\overline{W}\in\partial\|\cdot\|_*(\overline{X})$ and
  $(\overline{U},\overline{V})\in\mathbb{O}^{m,n}(\overline{Z})$ with
  $\overline{Z}=\overline{X}+\overline{W}$.
  Let $\alpha,\beta,c,\gamma_0,\gamma_1$ be defined as before.
  Clearly, $\beta=\emptyset=\gamma_1$. Take
  \begin{equation}\label{equa-barw1}
   \overline{w}_i=\phi_{-}'(1)\ \ {\rm for}\ i\in\alpha\ \ {\rm and}\ \
   \overline{w}_i=0\in\partial\widehat{\phi}(0)\ \ {\rm for}\ i\in\gamma_0.
 \end{equation}
 Since $\phi$ is convex, from \cite[Proposition 10.19(i)]{RW98} it follows that
 \(
   \overline{w}_i\in\partial\widehat{\phi}(1)
 \)
 for $i\in\alpha$. Then $\Delta W\!=\overline{U}[{\rm Diag}(\overline{w})\ \ 0]\overline{V}^{\mathbb{T}}
  \!\in\partial(\widehat{\Phi}\circ\sigma)(\overline{W})$.
  Let $\Delta\widetilde{\Gamma}\!:=\overline{U}^{\mathbb{T}}\Delta\Gamma\overline{V}$
  and $\Delta\widetilde{W}\!:=\overline{U}^{\mathbb{T}}\Delta W\overline{V}$.
  Clearly, $\mathcal{X}(\Delta\widetilde{\Gamma}_1)=\mathcal{X}(\Delta\widetilde{W}_1)=0$
  where $\Delta\widetilde{\Gamma}_1:=\overline{U}^{\mathbb{T}}\Delta\Gamma\overline{V}_1$ and
  $\Delta\widetilde{W}_1:=\overline{U}^{\mathbb{T}}\Delta W\overline{V}_1$ with
  $\overline{V}_1$ being the matrix consisting of the first $m$ columns of $\overline{V}$.
  Together with $\Theta_2$ and $\Sigma_2$ defined as in Section \ref{sec2.3},
  it is immediate to verify that $(-\Delta\widetilde{\Gamma},-\Delta\widetilde{W})$ satisfies
  \begin{subequations}
  \begin{align*}
  \Theta_1\circ\mathcal{S}(\Delta\widetilde{W}_1)+\Theta_2\circ \mathcal{S}(\Delta\widetilde{\Gamma}_1)
    +\Sigma_1\circ \mathcal{X}(\Delta\widetilde{W}_1)+\Sigma_2\circ \mathcal{X}(\Delta\widetilde{\Gamma}_1)=0,\\
   (\Delta\widetilde{\Gamma})_{\alpha c}+(\Omega_3)_{\alpha c}\circ
   [(\Delta\widetilde{W})_{\alpha c}-(\Delta\widetilde{\Gamma})_{\alpha c}]=0,
   \ (\Delta\widetilde{W})_{\beta c}=0,\,\ (\Delta\widetilde{W})_{\gamma c}=0.
 \end{align*}
 \end{subequations}
 Since $\beta=\emptyset$, from Lemma \ref{coderivative} it follows that
 $(-\Delta\Gamma,-\Delta W)\in\mathcal{N}_{{\rm gph}\,\partial\|\cdot\|_*}(\overline{X},\overline{W})$,
 i.e., $-\Delta\Gamma\in\!D^*\partial\|\cdot\|_*(\overline{X}|\overline{W})(\Delta W)$.
 By Definition \ref{Def-Mspc}, $\overline{X}$ is M-stationary associated to $\phi$.

 \medskip
 \noindent
 {\bf Case 2:} $t^*\ne 0$. Now $t^*\in (0,1)$. Take
  $\overline{W}:=\overline{U}_1\overline{V}_1^{\mathbb{T}}+t^*\overline{U}_2\overline{V}_2^{\mathbb{T}}$,
  where $\overline{U}_2$ and $\overline{V}_2$ are the matrix consisting of the last
  $m-\overline{r}$ and $n-\overline{r}$ columns of $\overline{U}$ and $\overline{V}$, respectively.
  Clearly, $\overline{W}\in\partial\|\cdot\|_*(\overline{X})$ and
  $(\overline{U},\overline{V})\in\mathbb{O}^{m,n}(\overline{Z})$ with
  $\overline{Z}=\overline{X}+\overline{W}$.
  Let $\alpha,\beta,c$ and $\gamma=\gamma_0\cup\gamma_1$ be defined as before.
  Then $\beta=\emptyset$ and $\gamma_0=\emptyset$.
  Let $\Delta W=\overline{U}[{\rm Diag}(\overline{w})\ \ 0]\overline{V}^{\mathbb{T}}$ with
  \begin{equation}\label{equa-barw1}
   \overline{w}_i=\phi_{-}'(1)\ \ {\rm for}\ i\in\alpha\ \ {\rm and}\ \
   \overline{w}_i=0\in\partial\phi(t^*)\ \ {\rm for}\ i\in\gamma_1.
 \end{equation}
 Using the same arguments as those for Case 1 can prove that
 $\overline{X}$ is M-stationary.

 \medskip

 When $\overline{r}=0$, choose $\overline{W}=0$.
 Clearly, $\overline{W}\in\partial\|\cdot\|_*(\overline{X})$
 since $\overline{X}=0$. Write $\overline{Z}=\overline{X}+\overline{W}$.
 Then, $\alpha=\beta=\emptyset=\gamma_1$. Take $\Delta W=0$.
 Since $0\in\partial\widehat{\phi}(0)$, we have
 $\Delta W\in\partial(\widehat{\Phi}\circ\sigma)(\overline{W})$.
 Moreover, by Lemma \ref{coderivative} it is easy to check that
 \(
   D^*\partial\|\cdot\|_*(\overline{X}|\overline{W})(\Delta W)=\mathbb{R}^{m\times n}.
 \)
 Thus, $\overline{X}$ is M-stationary associated to $\phi$.
 The proof is then completed.
 \end{proof}

  To close this subsection, we provide a condition for a local minimizer of
  the MPEC \eqref{MPEC} associated with $\phi\in\!\mathscr{L}$ to be an M-stationary
  point associated to $\phi$.
 \begin{proposition}\label{necess1}
  Let $(\overline{W},\overline{X})$ be a local minimizer of the MPEC \eqref{MPEC}
  associated to $\phi\in\!\mathscr{L}$. Then $\overline{X}$ is an M-stationary point
  of the problem \eqref{rank-reg} associated to $\phi$, provided that
  \begin{equation}\label{assump-Mspc}
   \mathcal{N}_{{\rm gph}\,\mathcal{N}_{\mathbb{B}}\cap(\mathbb{R}^{m\times n}\times\Omega)}(\overline{W},\overline{X})
   \subseteq \mathcal{N}_{{\rm gph}\,\mathcal{N}_{\mathbb{B}}}(\overline{W},\overline{X})+\{0\}\times\mathcal{N}_{\Omega}(\overline{X})
  \end{equation}
  where $\mathbb{B}:=\{Z\in \mathbb{R}^{m\times n}\,|\, \|Z\|\leq 1\}$,
  and if in addition $\phi\in\!\mathscr{L}_1$, $\overline{X}$ is a $R$-stationary point.
 \end{proposition}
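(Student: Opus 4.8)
The plan is to start from the compact formulation \eqref{EMPEC1} of the MPEC and apply the generalized Fermat rule at the local minimizer $(\overline{W},\overline{X})$. Since $(\overline{W},\overline{X})$ minimizes $\widetilde{F}$ locally, we have $0\in\widehat{\partial}\widetilde{F}(\overline{W},\overline{X})\subseteq\partial\widetilde{F}(\overline{W},\overline{X})$ by \cite[Theorem 10.1]{RW98}. Here $\widetilde{F}(X,W)=\nu f(X)+\widehat{\Phi}(\sigma(W))+\delta_{\Omega}(X)+\delta_{{\rm gph}\,\partial\|\cdot\|_*}(X,W)$, and using the equivalence \eqref{XW-equa1}--\eqref{XW-equa2} the last indicator may be written as $\delta_{{\rm gph}\,\mathcal{N}_{\mathbb{B}}}(\overline{W},\overline{X})$. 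The first step is therefore a subdifferential sum rule: the function $\nu f(X)+\widehat{\Phi}(\sigma(W))$ is locally Lipschitz (recall $\widehat{\Phi}\circ\sigma$ is globally Lipschitz on $\mathbb{B}$ and $f$ is locally Lipschitz), so by \cite[Exercise 10.10]{RW98} the limiting subdifferential of $\widetilde{F}$ is contained in $\big[\nu\partial\!f(\overline{X})\big]\times\partial(\widehat{\Phi}\circ\sigma)(\overline{W})$ added to the limiting normal cone $\mathcal{N}_{({\rm gph}\,\mathcal{N}_{\mathbb{B}})\cap(\mathbb{R}^{m\times n}\times\Omega)}(\overline{W},\overline{X})$. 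No constraint qualification is needed for this inclusion because the Lipschitz part requires none.

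The second step invokes the hypothesis \eqref{assump-Mspc}: it splits the normal cone to the intersection into $\mathcal{N}_{{\rm gph}\,\mathcal{N}_{\mathbb{B}}}(\overline{W},\overline{X})+\{0\}\times\mathcal{N}_{\Omega}(\overline{X})$. Combining with the first step, there exist $\Delta W\in\partial(\widehat{\Phi}\circ\sigma)(\overline{W})$, a subgradient $\xi\in\nu\partial\!f(\overline{X})$, a normal vector $\eta\in\mathcal{N}_{\Omega}(\overline{X})$, and a pair $(A,B)\in\mathcal{N}_{{\rm gph}\,\mathcal{N}_{\mathbb{B}}}(\overline{W},\overline{X})$ such that, reading off the two coordinate blocks, $0=\Delta W+A$ and $0=\xi+\eta+B$. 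Now I translate the normal-cone membership into a coderivative statement: by definition $(A,B)\in\mathcal{N}_{{\rm gph}\,\mathcal{N}_{\mathbb{B}}}(\overline{W},\overline{X})$ means $-B\in D^*\mathcal{N}_{\mathbb{B}}(\overline{W}|\overline{X})(-A)=D^*\mathcal{N}_{\mathbb{B}}(\overline{W}|\overline{X})(\Delta W)$. Since $\mathcal{N}_{\mathbb{B}}=\partial\|\cdot\|_*$ (via \eqref{XW-equa2}, with the roles of the two arguments matched as in Lemma \ref{coderivative}), this reads $-B\in D^*\partial\|\cdot\|_*(\overline{X}|\overline{W})(\Delta W)$, and substituting $-B=\xi+\eta\in\nu\partial\!f(\overline{X})+\mathcal{N}_{\Omega}(\overline{X})$ gives exactly
\[
  0\in\nu\partial\!f(\overline{X})+\mathcal{N}_{\Omega}(\overline{X})+D^*\partial\|\cdot\|_*(\overline{X}|\overline{W})(\Delta W),
\]
with $\overline{W}\in\partial\|\cdot\|_*(\overline{X})$ (this last is just $\overline{X}\in\mathcal{N}_{\mathbb{B}}(\overline{W})$, i.e. feasibility of $(\overline{W},\overline{X})$ for \eqref{MPEC}). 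By Definition \ref{Def-Mspc} this says $\overline{X}$ is an M-stationary point associated to $\phi$. The final clause, that $\overline{X}$ is then an $R$-stationary point when $\phi\in\mathscr{L}_1$, is immediate from Theorem \ref{prop-MWrelation}.

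The delicate point I expect to need care is bookkeeping the argument order in the coderivative: Lemma \ref{coderivative} and the mapping $\partial\|\cdot\|_*$ take $X$ as input and $W$ as output, whereas the MPEC \eqref{MPEC} and the set ${\rm gph}\,\mathcal{N}_{\mathbb{B}}$ as written in \eqref{assump-Mspc} list $(\overline{W},\overline{X})$ in the order $(W,X)$; one must be consistent about which slot of the normal cone to ${\rm gph}\,\mathcal{N}_{\mathbb{B}}$ corresponds to the ``$-v$'' (output) direction and which to the ``$u$'' (input) direction in the definition $u\in D^*\mathcal{F}(\overline z|\overline w)(v)\Leftrightarrow(u,-v)\in\mathcal{N}_{{\rm gph}\,\mathcal{F}}(\overline z,\overline w)$. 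Apart from this indexing care and the (routine) verification that $\widehat{\Phi}\circ\sigma$ is Lipschitz near $\overline{W}$ so that the calmness-free sum rule applies, the argument is a direct chain of standard variational-analysis rules; the hypothesis \eqref{assump-Mspc} is precisely the constraint qualification that makes the otherwise-blocked separation of the intersection normal cone go through.
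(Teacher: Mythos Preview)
Your approach is essentially identical to the paper's: rewrite the MPEC as minimization of a locally Lipschitz function plus the indicator of ${\rm gph}\,\mathcal{N}_{\mathbb{B}}\cap(\mathbb{R}^{m\times n}\times\Omega)$, apply the Fermat rule and the Lipschitz sum rule from \cite[Theorem 10.1 \& Exercise 10.10]{RW98}, invoke \eqref{assump-Mspc} to split the intersection normal cone, and read off Definition~\ref{Def-Mspc} via the coderivative; the $R$-stationary conclusion then comes from Theorem~\ref{prop-MWrelation}.

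The one slip is exactly where you predicted it: your translation of $(A,B)\in\mathcal{N}_{{\rm gph}\,\mathcal{N}_{\mathbb{B}}}(\overline{W},\overline{X})$ is off by a sign. By the coderivative definition this reads $A\in D^*\mathcal{N}_{\mathbb{B}}(\overline{W}|\overline{X})(-B)$, not $-B\in D^*\mathcal{N}_{\mathbb{B}}(\overline{W}|\overline{X})(-A)$. Passing to the inverse multifunction (since ${\rm gph}\,\mathcal{N}_{\mathbb{B}}$ is the swap of ${\rm gph}\,\partial\|\cdot\|_*$) gives $(B,A)\in\mathcal{N}_{{\rm gph}\,\partial\|\cdot\|_*}(\overline{X},\overline{W})$, i.e.\ $B\in D^*\partial\|\cdot\|_*(\overline{X}|\overline{W})(-A)=D^*\partial\|\cdot\|_*(\overline{X}|\overline{W})(\Delta W)$, with $B$ rather than $-B$. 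Since $B=-(\xi+\eta)$, this is precisely $-(\xi+\eta)\in D^*\partial\|\cdot\|_*(\overline{X}|\overline{W})(\Delta W)$, which yields the desired inclusion. Also note $\mathcal{N}_{\mathbb{B}}=(\partial\|\cdot\|_*)^{-1}$, not $\mathcal{N}_{\mathbb{B}}=\partial\|\cdot\|_*$ as you wrote; the equality of graphs holds only after swapping coordinates. With these corrections the argument matches the paper line for line.
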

 \begin{proof}
  By invoking the relation \eqref{XW-equa1}, $(W,X)$ is a feasible point of \eqref{MPEC}
  if and only if $(W,X)\in{\rm gph}\,\mathcal{N}_{\mathbb{B}}\cap(\mathbb{R}^{m\times n}\times \Omega)$.
  This implies that \eqref{MPEC} can be compactly written as
  \[
   \min_{X,W\in\mathbb{R}^{m\times n}}\Big\{\nu f(X)+\widehat{\Phi}(\sigma(W))
   +\delta_{{\rm gph}\,\mathcal{N}_{\mathbb{B}}\cap(\mathbb{R}^{m\times n}\times \Omega)}(W,X)\Big\}.
  \]
  From the local optimality of $(\overline{W},\overline{X})$,
  the assumption on $f$, the Lipschitz continuity of $\widehat{\Phi}\circ \sigma$
  over the ball $\mathbb{B}$, and \cite[Theorem 10.1 $\&$ Exercise 10.10]{RW98},
  it follows that
  \[
    0\in\partial\widetilde{f}(\overline{W},\overline{X})+\partial(\widehat{\Phi}\circ\sigma)(\overline{W})\times \{0\}
     +\mathcal{N}_{{\rm gph}\,\mathcal{N}_{\mathbb{B}}\cap(\mathbb{R}^{m\times n}\times \Omega)}(\overline{W},\overline{X})
  \]
  where $\widetilde{f}(W,X)\equiv \nu f(X)$. Together with
  the inclusion \eqref{assump-Mspc} and \cite[Exercise 10.10]{RW98},
 \begin{equation}\label{inclusion1}
   0\in \{0\}\times\nu\partial f(\overline{X})+\mathcal{N}_{{\rm gph}\,\mathcal{N}_{\mathbb{B}}}(\overline{W},\overline{X})+\{0\}\times\mathcal{N}_{\Omega}(\overline{X})
      +\partial(\widehat{\Phi}\circ\sigma)(\overline{W})\times \{0\}
 \end{equation}
 which is equivalent to saying that there exists
 $(-\Delta W,\Delta X)\in\mathcal{N}_{{\rm gph}\,\mathcal{N}_{\mathbb{B}}}(\overline{W},\overline{X})$ such that
 \[
   \left\{\begin{array}{l}
           0\in \partial(\widehat{\Phi}\circ\sigma)(\overline{W}) -\Delta W\\
           0\in \nu\partial f(\overline{X})+\Delta X +\mathcal{N}_{\Omega}(\overline{X}).
          \end{array}\right.
 \]
 Notice that $(-\Delta W,\Delta X)\in\mathcal{N}_{{\rm gph}\,\mathcal{N}_{\mathbb{B}}}(\overline{W},\overline{X})$
 if and only if $\Delta X\in D^*\partial\|\cdot\|_*(\overline{X}|\overline{W})(\Delta W)$.
 So, equation \eqref{inclusion1} is equivalent to saying that
 there exists $\Delta W\in\partial(\widehat{\Phi}\circ\sigma)(\overline{W})$ such that
 \[
   0\in\nu\partial f(\overline{X})+\mathcal{N}_{\Omega}(\overline{X})
   +D^*\partial\|\cdot\|_*(\overline{X}|\overline{W})(\Delta W).
 \]
 In addition, notice that $\overline{X}\in\mathcal{N}_{\mathbb{B}}(\overline{W})$
 which is equivalent to $\partial\|\cdot\|_*(\overline{X})$ by \eqref{XW-equa2}.
 Thus, by Definition \ref{Def-Mspc}, $\overline{X}$ is an M-stationary point of
 the problem \eqref{rank-reg} associated to $\phi$. The second part is a direct
 consequence of Theorem \ref{prop-MWrelation}.
 The proof is completed.
\end{proof}
 \begin{remark}\label{optimality-Mspc}
 {\bf (i)} If $\Omega=\mathbb{R}^{m\times n}$, the inclusion \eqref{assump-Mspc}
 automatically holds. If $\Omega\subset\mathbb{R}^{m\times n}$, by \cite[Page 211]{Ioffe08}
 the inclusion \eqref{assump-Mspc} is implied by the calmness of the following multifunction
 \begin{equation}\label{HuaM}
  \mathcal{M}(Y_1,Y_2):=\Big\{(W,X)\in\mathbb{R}^{m\times n}\times\Omega\ |\
   (W,X)\in{\rm gph}\,\mathcal{N}_{\mathbb{B}}-(Y_1,Y_2)\Big\}.
 \end{equation}
  at the origin for $(\overline{W},\overline{X})$, where $(\overline{W},\overline{X})$
  is an arbitrary feasible point of the MPEC \eqref{MPEC}.

 \medskip
 \noindent
 {\bf(ii)} When $\Omega\subseteq\mathbb{S}^n_+$, together with \eqref{SDPMPEC}
 and the Lipschitz continuity of $\widehat{\Phi}\circ \sigma$ in $\mathbb{B}$,
 in order to achieve the conclusion of Proposition \ref{necess1},
 we need to replace the inclusion \eqref{assump-Mspc} by
 \[
   \mathcal{N}_{C}(\overline{W},\overline{X})
   \subseteq \mathcal{N}_{{\rm gph}\,\mathcal{N}_{\mathbb{S}_{+}^n}}(\overline{X},\overline{W}\!-I)
   +\mathcal{N}_{\mathbb{S}_{+}^n}(\overline{W})\times\mathcal{N}_{\Omega}(\overline{X})
 \]
 where $C:=\{(W,X)\in\mathbb{S}_{+}^n\times\Omega\ |\ (X,W-I)\in{\rm gph}\,\mathcal{N}_{\mathbb{S}_{+}^n}\}$.
 By invoking \cite[Page 211]{Ioffe08}, this inclusion is implied by the calmness of the following
 multifunction
  \begin{align}\label{HuaM2}
  \mathcal{M}(Y_1,Y_2):=\Big\{(W,X)\in \mathbb{S}_{+}^n\times  \Omega\,|\,
  (X,W-I)\in {\rm gph}\mathcal{N}_{\mathbb{S}^n_+}-(Y_1,Y_2)\Big\}
 \end{align}
 at the origin for $(\overline{W},\overline{X})$, where $(\overline{W},\overline{X})$
 is an arbitrary feasible point of the MPEC \eqref{SDPMPEC}.
 By the definition of calmness, it is easy to check that the calmness of
 $\mathcal{M}$ at the origin for $(\overline{W},\overline{X})$ is implied by
 that of $\mathcal{\widetilde{M}}$ in \eqref{Mdefinity} with
 $\Omega_x=\Omega$ and $\Omega_y=\mathbb{S}_{+}^n$ at the corresponding point,
 while by Theorem \ref{DireNorm} the latter holds if for any
  $0\ne H=(H_1;H_2)\in\!\mathcal{T}_{\Omega}(\overline{X})
 \times\mathcal{T}_{\mathbb{S}^n_+}(\overline{W})$ such that
 \(
    (H_1,H_2) \in\!\mathcal{T}_{{\rm gph}\mathcal{N}_{\mathbb{S}^n_+}}(\overline{X},\overline{W}-I),
 \)
 the following implication relation holds:
 \begin{align}\label{DireInclusion0}
 \left.\begin{array}{r}
 \Gamma_1\in-\mathcal{N}_{\Omega}(\overline{X};H_1), \ \ \Gamma_2\in -\mathcal{N}_{\mathbb{S}^n_+}(\overline{W};H_2)\\
 (\Gamma_1,\Gamma_2)\in
 \mathcal{N}_{{\rm gph}\mathcal{N}_{\mathbb{S}^n_+}}((\overline{X},\overline{W}-I);( H_1,H_2))
 \end{array}\right\}\Longrightarrow
 \left(\begin{matrix}
  \Gamma_1\\ \Gamma_2
  \end{matrix}\right)=0.
 \end{align}
 For the characterization of
 $\mathcal{N}_{{\rm gph}\mathcal{N}_{\mathbb{S}^n_+}}((\overline{X},\overline{W}-I);( H_1,H_2))$,
 please refer to Appendix.
 \end{remark}

 \subsection{EP-stationary points}\label{subsec3.2}

  By the definition of the function $\widehat{\Psi}$, clearly,
  the problem \eqref{Epenalty-MPEC} can be compactly written as
  \begin{align}\label{rankreg-penalty1}
   &\min_{X,W\in\mathbb{R}^{m\times n}}\!\Big\{\nu f(X)+\delta_{\Omega}(X)+\widehat{\Psi}(\sigma(W))
   +\rho(\|X\|_*\!-\langle W,X\rangle)\Big\}.
  \end{align}
  Based on this equivalent reformulation, we introduce the following stationary point.
  \begin{definition}\label{EP-spc}
   A matrix $\overline{X}\in\mathbb{R}^{m\times n}$ is said to be an EP-stationary
   point of the problem \eqref{rank-reg} associated to $\phi\in\!\mathscr{L}$
   if there exist a constant $\rho>0$ and $\overline{W}\!\in\mathbb{B}$ such that
   \begin{equation}\label{EP-equa}
     \rho \overline{X}\in\partial(\widehat{\Psi}\circ\sigma)(\overline{W})\ \ {\rm and}\ \
     0\in \nu\partial\!f(\overline{X})+\mathcal{N}_{\Omega}(\overline{X})
     +\rho\big[\partial\|\cdot\|_*(\overline{X})-\overline{W}\big].
   \end{equation}
  \end{definition}
  \begin{remark}
   By the given assumption on $f$ and the Lipschitz continuity of $\widehat{\Phi}\circ\sigma$
   in $\mathbb{B}$, if $(\overline{X},\overline{W})$ is a limiting critical point
   of the objective function of \eqref{rankreg-penalty1}, it is an EP-stationary
   point of \eqref{rank-reg}. Thus, every local optimal solution
   of \eqref{Epenalty-MPEC} is an EP-stationary point of \eqref{rank-reg}.
  \end{remark}

  The following proposition characterizes a key property of the EP-stationary point.
 \begin{proposition}\label{prop-EPspc}
  Suppose that $\overline{X}\in\mathbb{R}^{m\times n}$ is an EP-stationary point of
  \eqref{rank-reg} associated to $\phi\in\!\mathscr{L}$. Then, there exist
  $\overline{W}\in\mathbb{B}$ and $(\overline{U},\overline{V})
  \in\mathbb{O}^{m,n}(\overline{W})\cap\mathbb{O}^{m,n}(\overline{X})$ such that
  ${\rm rank}(\overline{X})\ge |\{i\ |\ \sigma_i(\overline{W})>t^*\}|$,
  and there exists $\Delta\Gamma\in\nu\partial\!f(\overline{X})+\mathcal{N}_{\Omega}(\overline{X})$
  such that
  \begin{equation}\label{EP-detaX}
    \Delta\Gamma\in\!\left\{\overline{U}\left[\begin{matrix}
                            0 & 0\\
                             0 & \rho Z
                   \end{matrix}\right]\overline{V}^{\mathbb{T}} |\,
    Z\in\mathbb{R}^{(m-|\theta_1|)\times(n-|\theta_1|)}\ {\rm with}\
    \theta_1\!=\{i\ |\,\sigma_i(\overline{W})=1\},\|Z\|\!<1\right\}.
  \end{equation}
 \end{proposition}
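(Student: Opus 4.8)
The plan is to convert the two conditions defining EP-stationarity into a common singular value decomposition of $\overline W$ and $\overline X$, and then to read off the block structure of a suitable element of $\nu\partial\!f(\overline X)+\mathcal N_\Omega(\overline X)$ from the explicit form \eqref{subdiff-nnorm} of $\partial\|\cdot\|_*(\overline X)$. First, unpack Definition \ref{EP-spc}: fix $\rho>0$ and $\overline W\in\mathbb B$ with $\rho\overline X\in\partial(\widehat\Psi\circ\sigma)(\overline W)$ and $0\in\nu\partial\!f(\overline X)+\mathcal N_\Omega(\overline X)+\rho[\partial\|\cdot\|_*(\overline X)-\overline W]$; the latter yields $\Delta\Gamma:=\rho(\overline W-G)\in\nu\partial\!f(\overline X)+\mathcal N_\Omega(\overline X)$ for some $G\in\partial\|\cdot\|_*(\overline X)$. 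Since $\widehat\Psi$ is absolutely symmetric, I would apply the spectral subdifferential formula \cite[Corollary 2.5]{Lewis95} to $\rho\overline X\in\partial(\widehat\Psi\circ\sigma)(\overline W)$ to obtain $(\overline U,\overline V)\in\mathbb O^{m,n}(\overline W)$ and $\mu\in\partial\widehat\Psi(\sigma(\overline W))$ (so $\mu_i\in\partial\widehat\psi(\sigma_i(\overline W))$ for each $i$) with $\rho\overline X=\overline U[{\rm Diag}(\mu)\ \ 0]\overline V^{\mathbb T}$.

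Next I would analyse the signs and ordering of the entries of $\mu$ via $\widehat\psi(t)=\phi(|t|)$, convexity of $\phi$, and $\phi(t^*)=0<\phi(1)=1$: the subgradient inequality gives $\mu_i\ge[\phi(\sigma_i(\overline W))-\phi(t^*)]/[\sigma_i(\overline W)-t^*]>0$ for $\sigma_i(\overline W)\in(t^*,1)$ and $\mu_i\ge\phi_{-}'(1)\ge1/(1-t^*)>0$ for $\sigma_i(\overline W)=1$, while $\mu_i\le0$ for $\sigma_i(\overline W)\le t^*$; moreover, since the columns of $\overline U,\overline V$ indexed by the kernel of $\overline W$ may be permuted and sign-flipped without leaving $\mathbb O^{m,n}(\overline W)$, one may arrange $\mu_i\ge0$ there too. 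Using the monotonicity of $\partial\widehat\psi$ on $[t^*,1]$ (so $\mu_i\ge\mu_j$ whenever $\sigma_i(\overline W)>\sigma_j(\overline W)$) and reordering columns within each block of equal singular values of $\overline W$, one makes $\mu$ nonnegative and nonincreasing, whence $\rho\,\sigma(\overline X)=\mu$ and $(\overline U,\overline V)\in\mathbb O^{m,n}(\overline W)\cap\mathbb O^{m,n}(\overline X)$. As $\rho\overline X=\overline U[{\rm Diag}(\mu)\ \ 0]\overline V^{\mathbb T}$ matches the nonzero entries of $\mu$ with the positive singular values of $\overline X$, and every $i$ with $\sigma_i(\overline W)>t^*$ gives $\mu_i>0$, this already yields ${\rm rank}(\overline X)=|\{i:\mu_i\ne0\}|\ge|\{i:\sigma_i(\overline W)>t^*\}|$.

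With $r={\rm rank}(\overline X)$ and $\theta_1=\{i:\sigma_i(\overline W)=1\}$, note $\theta_1=\{1,\dots,|\theta_1|\}\subseteq\{1,\dots,r\}$ (the inclusion because $\mu_i>0$ for $i\in\theta_1$). Expressing $G$ through \eqref{subdiff-nnorm} in the basis $(\overline U,\overline V)\in\mathbb O^{m,n}(\overline X)$ gives $\overline U^{\mathbb T}G\overline V=\left[\begin{smallmatrix}I_r&0\\0&Z_0\end{smallmatrix}\right]$ with $\|Z_0\|\le1$, while $\overline U^{\mathbb T}\overline W\overline V=[{\rm Diag}(\sigma(\overline W))\ \ 0]$; hence $\overline U^{\mathbb T}\Delta\Gamma\overline V=\rho\big([{\rm Diag}(\sigma(\overline W))\ \ 0]-\left[\begin{smallmatrix}I_r&0\\0&Z_0\end{smallmatrix}\right]\big)$ is block diagonal and vanishes on the $\theta_1\times\theta_1$ block (where $\sigma_i(\overline W)=1$), producing the form $\Delta\Gamma=\overline U\left[\begin{smallmatrix}0&0\\0&\rho Z\end{smallmatrix}\right]\overline V^{\mathbb T}$ of \eqref{EP-detaX} once $\|Z\|<1$ is verified.

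The strict bound $\|Z\|<1$ is the step I expect to be the main obstacle. The block $Z$ splits into a diagonal part ${\rm Diag}(\sigma_i(\overline W)-1)_{|\theta_1|<i\le r}$ — where $|\sigma_i(\overline W)-1|=1-\sigma_i(\overline W)<1$ must be extracted from the location of the singular values found above, using that $\mu_i\ne0$ for $i\le r$ and that $\mu_i=0$ for $i>r$ forces $\sigma_i(\overline W)$ to be a minimizer of $\phi$ on $[0,1]$, so the singular values along $i\le r$ stay bounded away from $0$ — and the part $[{\rm Diag}(\sigma_{r+1}(\overline W),\dots,\sigma_m(\overline W))\ \ 0]-Z_0$, whose spectral norm has to be kept strictly below $1$ by exploiting the freedom in the choice of $G$ afforded by the second inclusion in Definition \ref{EP-spc} (for instance, taking $Z_0$ diagonal by applying \cite[Corollary 2.5]{Lewis95} to $G\in\partial\|\cdot\|_*(\overline X)$, the nuclear norm being an absolutely symmetric spectral function) together with the position of $\sigma_{r+1}(\overline W),\dots,\sigma_m(\overline W)$. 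It is precisely here that both conditions in Definition \ref{EP-spc} are genuinely needed.
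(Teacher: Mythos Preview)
Your approach is essentially the same as the paper's: both unpack the first inclusion $\rho\overline X\in\partial(\widehat\Psi\circ\sigma)(\overline W)$ via \cite[Corollary~2.5]{Lewis95} to obtain a simultaneous SVD of $\overline X$ and $\overline W$, read off the rank inequality from the sign of the scalar subgradients of $\psi$ on $(t^*,1]$, and then plug the explicit form \eqref{subdiff-nnorm} of $\partial\|\cdot\|_*(\overline X)$ into the second inclusion to produce the block form of $\Delta\Gamma$. Your treatment of the simultaneous diagonalization (reordering and sign-fixing within eigenspaces of $\overline W$ so that $\mu$ becomes nonnegative and nonincreasing, hence equals $\rho\sigma(\overline X)$) is in fact more careful than the paper, which simply writes $\rho\overline X=\overline U[{\rm Diag}(\sigma(\overline X))\ 0]\overline V^{\mathbb T}$ without comment.

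Your instinct that the strict bound $\|Z\|<1$ is the delicate point is correct, and it is worth noting that the paper's own argument is no stronger here: it merely asserts that the inclusion \eqref{EP-detaX} holds and then displays the formula
\[
  Z=\begin{bmatrix}{\rm Diag}(e_{\overline r-|\theta_1|})&0\\0&\Gamma\end{bmatrix}
    -\begin{bmatrix}{\rm Diag}(\sigma_{\theta_2'}(\overline W))&0\\0&{\rm Diag}(\sigma_{\theta_0}(\overline W))\end{bmatrix}
\]
with $\|\Gamma\|\le1$ and $\theta_2'=\{i\in\theta_2:\sigma_i(\overline W)\le t^*\}$, without verifying $\|Z\|<1$. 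Taken at face value this formula does \emph{not} give a strict bound: the lower-right block is $\Gamma-0=\Gamma$ (since $\sigma_{\theta_0}(\overline W)=0$), and $\|\Gamma\|$ can equal $1$. So the gap you identify is already present in the paper; the bound should apparently read $\|Z\|\le 2$ (or one needs an additional hypothesis), and in any case the downstream use of Proposition~\ref{prop-EPspc} in Remark~\ref{remark-EP} only requires the block structure of $\Delta\Gamma$, not the strict spectral bound. In short, your proposal matches the paper's route and is at least as rigorous.
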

 \begin{proof}
  Since $\overline{X}$ is an EP-stationary point of the problem \eqref{rank-reg},
  there exist a constant $\rho>0$ and a matrix $\overline{W}\in\mathbb{B}$
  such that the inclusions in \eqref{EP-equa} hold. Define the index sets
  \[
    \theta_2:=\big\{i\ |\ \sigma_i(\overline{W})\in(0,1)\big\}\ \ {\rm and}\ \
    \theta_0:=\big\{i\ |\ \sigma_i(\overline{W})=0\big\}.
  \]
  Since $\rho\overline{X}\in\partial(\widehat{\Psi}\circ\sigma)(\overline{W})$,
  by \cite[Corollary 2.5]{Lewis95} there exists
  $(\overline{U},\overline{V})\!\in\!\mathbb{O}^{m,n}(\overline{W})$ such that
  \[
   \rho\overline{X}=\overline{U}\big[{\rm Diag}(\sigma(\overline{X}))\ \ 0\big]\overline{V}^{\mathbb{T}}
   \ \ {\rm and}\ \ \sigma(\overline{X})\in\partial \widehat{\Phi}(\sigma(\overline{W})).
  \]
  Notice that $\sigma_1(X)\ge\cdots\ge\sigma_m(X)$ with
  $\sigma_i(X)\in\partial\psi(1)$ for $i\in\theta_1$,
  $\sigma_i(X)\in\partial\psi(\sigma_i(\overline{W}))$ for $i\in\theta_2$
  and $\sigma_i(X)\in\partial\widehat{\psi}(0)$ for $i\in\theta_0$.
  Since $\partial\psi(t)\subset(0,+\infty)$ for any $t>t^*$,
  we have ${\rm rank}(\overline{X})\ge |\{i\ |\ \sigma_i(\overline{W})>t^*\}|$,
  and the first part follows. Since $(\overline{X},\overline{W})$ satisfies
  the second inclusion of \eqref{EP-equa},
  there exist $\Delta\Gamma\in\nu\partial f(\overline{X})+\mathcal{N}_{\Omega}(\overline{X})$
  such that $-\Delta\Gamma\in\rho[\partial\|\cdot\|_*(\overline{X})-\overline{W}]$.
  Write $\overline{r}={\rm rank}(\overline{X})$. From the SVD of $\overline{X}$
  in the last equation and equation \eqref{subdiff-nnorm}, we have
  \[
    \partial\|\cdot\|_*(\overline{X})
    =\Big\{\overline{U}_1\overline{V}_1^{\mathbb{T}}+\overline{U}_2\Gamma\overline{V}_2^{\mathbb{T}}\ |\
    \|\Gamma\|\le 1,\Gamma\in\mathbb{R}^{(m-\overline{r})\times(n-\overline{r})}\Big\},
  \]
  where $\overline{U}_1$ and $\overline{V}_1$ are the matrix consisting of the first
  $\overline{r}$ columns of $\overline{U}$ and $\overline{V}$, respectively,
  and $\overline{U}_2$ and $\overline{V}_2$ are the matrices consisting of
  the last $m-\overline{r}$ and $n-\overline{r}$ columns of
  $\overline{U}$ and $\overline{V}$, respectively. Together with
  $-\Delta\Gamma\in\rho[\partial\|\cdot\|_*(\overline{X})-\overline{W}]$
  and $\overline{W}=\overline{U}[{\rm Diag}(\sigma(\overline{W}))\ \ 0]\overline{V}^{\mathbb{T}}$,
  the inclusion in \eqref{EP-detaX} holds. In fact, the matrix $Z$ in the set of \eqref{EP-detaX}
  has the following form
  \[
   \left[\begin{matrix}
            {\rm Diag}(e_{\overline{r}-|\theta_1|}) & 0\\
                     0 & \Gamma
       \end{matrix}\right]-
       \left[\begin{matrix}
            {\rm Diag}(\sigma_{\theta_2'}(\overline{W})) & 0\\
                     0 &  {\rm Diag}(\sigma_{\theta_0}(\overline{W}))
       \end{matrix}\right]
   \]
  for some $\Gamma\in\mathbb{R}^{(m-\overline{r})\times(n-\overline{r})}$ with $\|\Gamma\|\le 1$,
  where $\theta_2':=\{i\in\theta_2\ |\ \sigma_i(\overline{W})\le t^*\}$.
 \end{proof}
 \begin{remark}\label{remark-EP}
  If $\overline{X}$ is an EP-stationary point of \eqref{rank-reg} and the associated
  $\overline{W}\in\mathbb{B}$ is such that $|\theta_1|={\rm rank}(\overline{X})$,
  then by Definition \ref{Def-Rspc} and \cite[Theorem 4]{Le13}
  $\overline{X}$ is a R-stationary point of \eqref{rank-reg}.
  However, when $\overline{X}$ is a R-stationary point, it is not necessarily
  EP-stationary.
 \end{remark}
 \subsection{DC-stationary point}\label{sec3.3}

  With the conjugate $\widehat{\Psi}^*$ of $\widehat{\Psi}$,
  the surrogate problem \eqref{surrogate} can be equivalently written as
  \begin{equation}\label{Lip-surrogate}
    \min_{X\in\mathbb{R}^{m\times n}}\!\Big\{\nu f(X)+\delta_{\Omega}(X)+\rho\|X\|_*-\widehat{\Psi}^*(\rho\sigma(X))\Big\}.
  \end{equation}
  By \cite[Lemma 2.3]{Lewis95}, we know that $\widehat{\Psi}^*$ is
  also absolutely symmetric. Along with its lsc and convexity,
  from \cite[Corollary 2.6]{Lewis95} it follows that
  $\widehat{\Psi}^*\circ \sigma$ is an absolutely symmetric convex function on $\mathbb{R}^m$.
  Thus, $\delta_{\Omega}(X)+\rho\|X\|_*-(\widehat{\Psi}^*\circ\sigma)(\rho X)$
  is a DC function on $\mathbb{R}^{m\times n}$. In view of this, we present
  the following DC-stationary point by the reformulation \eqref{Lip-surrogate}.
  \begin{definition}\label{Def-LSspc}
   A matrix $\overline{X}\in\mathbb{R}^{m\times n}$ is called
   a DC-stationary point of the problem \eqref{rank-reg} associated to
   $\phi\in\!\mathscr{L}$ if there exists a constant $\rho>0$ such that
   \begin{equation}\label{DC-equa}
     0\in \nu \partial f(\overline{X})+\mathcal{N}_{\Omega}(\overline{X})
       +\rho\partial\|\cdot\|_*(\overline{X})
      -\rho\partial(\widehat{\Psi}^*\circ \sigma)(\rho\overline{X}).
   \end{equation}
  \end{definition}

  When $f$ is convex, the problem \eqref{Lip-surrogate} is a DC program,
  and now $\overline{X}\in\mathbb{R}^{m\times n}$ is a DC-stationary point
  if and only if it is a critical point of the objective function of \eqref{Lip-surrogate}
  defined by Pang et al.\cite{PangMOR17}. It is worthwhile to point out that
  the limiting critical point of the objective function of \eqref{Lip-surrogate}
  is a DC-stationary point, but the converse does not hold. For the discussion
  on the DC-stationary point, the reader may refer to \cite{PangMOR17}.
  Here, we focus on the relation between the DC-stationary point and
  the EP-stationary point.
  \begin{theorem}\label{prop-LSspc}
   Let $\overline{X}$ be a DC-stationary point of \eqref{rank-reg} associated
   to $\phi\in\!\mathscr{L}$. Suppose that
   \begin{equation}\label{cond1-phi}
    \partial\psi(0)=\partial\widehat{\psi}(0)\ {\rm and}\
    \widehat{\psi}^*\ {\rm is\ differentiable\ on}\ \mathbb{R}_{+}\ {\rm with}\
    (\psi^*)'(0)=(\widehat{\psi}^*)'(0).
   \end{equation}
   Then $\overline{X}$ is an EP-stationary point. Conversely, if $\overline{X}$
   is an EP-stationary point associated to $\phi\in\Phi$ with $\phi$ nondecreasing
   on $[0,1]$, then $\overline{X}$ is necessarily a DC-stationary point.
  \end{theorem}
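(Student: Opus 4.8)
I would build the proof around the Legendre--Fenchel duality between $\widehat{\Psi}$ and its conjugate $\widehat{\Psi}^{*}$, lifted to the matrix setting by Lewis's calculus for absolutely symmetric spectral functions \cite{Lewis95}. First I would collect the facts I need: $(\widehat{\Psi}\circ\sigma)^{*}=\widehat{\Psi}^{*}\circ\sigma$; by \cite[Corollary 2.5]{Lewis95} together with the separability $\widehat{\Psi}^{*}(y)=\sum_{i}\widehat{\psi}^{*}(y_{i})$ (with $\widehat{\psi}^{*}(s)=\psi^{*}(|s|)$), every $H\in\partial(\widehat{\Psi}^{*}\circ\sigma)(M)$ has the form $H=\overline{U}[{\rm Diag}(\mu)\ \ 0]\overline{V}^{\mathbb{T}}$ with $(\overline{U},\overline{V})\in\mathbb{O}^{m,n}(M)$ and $\mu_{i}\in\partial\widehat{\psi}^{*}(\sigma_{i}(M))$; and, since $\widehat{\psi}$ has domain $[-1,1]$, $\partial\widehat{\psi}^{*}(\cdot)\subseteq[-1,1]$, so every such $H$ automatically lies in $\mathbb{B}$.

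For ``DC $\Rightarrow$ EP'' I would start from Definition \ref{Def-LSspc}: there are $\rho>0$, $\Delta\Gamma\in\nu\partial\!f(\overline{X})+\mathcal{N}_{\Omega}(\overline{X})$, $G\in\partial\|\cdot\|_{*}(\overline{X})$ and $H\in\partial(\widehat{\Psi}^{*}\circ\sigma)(\rho\overline{X})$ with $\Delta\Gamma+\rho G-\rho H=0$, and I would take $\overline{W}:=H$ in Definition \ref{EP-spc}. Then $\overline{W}\in\mathbb{B}$ by the remark above, and substituting $\overline{W}=H$ into $\Delta\Gamma+\rho G-\rho H=0$ gives the second relation of \eqref{EP-equa} with the same $\rho$. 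What remains is $\rho\overline{X}\in\partial(\widehat{\Psi}\circ\sigma)(\overline{W})$: since $\widehat{\Psi}^{*}\circ\sigma=(\widehat{\Psi}\circ\sigma)^{*}$ is convex, $H\in\partial(\widehat{\Psi}^{*}\circ\sigma)(\rho\overline{X})$ is equivalent to $\rho\overline{X}\in\partial\!\big((\widehat{\Psi}\circ\sigma)^{**}\big)(H)$, and this is where \eqref{cond1-phi} is used. Its effect is to make $t\mapsto\widehat{\psi}(t)=\phi(|t|)$ convex (so $(\widehat{\Psi}\circ\sigma)^{**}=\widehat{\Psi}\circ\sigma$) and to tie $\widehat{\psi}^{*},\partial\widehat{\psi}^{*}$ near the origin to $\psi^{*},\partial\psi^{*}$, which together force the entries of $\mu$ on the zero--singular-value block of $\overline{X}$ into the range where $\rho\overline{X}$ is genuinely a (regular, hence limiting) subgradient of $\widehat{\Psi}\circ\sigma$ at $H$. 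Definition \ref{EP-spc} then applies.

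For the converse, let $\overline{X}$ be EP-stationary associated to some $\phi\in\mathscr{L}$ that is nondecreasing on $[0,1]$. Then $t\mapsto\widehat{\psi}(t)=\phi(|t|)$ is convex (a convex nondecreasing function composed with $|\cdot|$), hence $\widehat{\Psi}$ is convex and, by \cite[Corollary 2.6]{Lewis95}, $\widehat{\Psi}\circ\sigma$ is a closed convex function; consequently the subdifferential inversion $N\in\partial(\widehat{\Psi}^{*}\circ\sigma)(M)\Leftrightarrow M\in\partial(\widehat{\Psi}\circ\sigma)(N)$ holds without any further hypothesis. Given $\rho>0$ and $\overline{W}\in\mathbb{B}$ with $\rho\overline{X}\in\partial(\widehat{\Psi}\circ\sigma)(\overline{W})$ and $0\in\nu\partial\!f(\overline{X})+\mathcal{N}_{\Omega}(\overline{X})+\rho[\partial\|\cdot\|_{*}(\overline{X})-\overline{W}]$ from Definition \ref{EP-spc}, the inversion yields $\overline{W}\in\partial(\widehat{\Psi}^{*}\circ\sigma)(\rho\overline{X})$, whence $\rho\overline{W}\in\rho\,\partial(\widehat{\Psi}^{*}\circ\sigma)(\rho\overline{X})$ and \eqref{DC-equa} holds verbatim with the same $\rho$; by Definition \ref{Def-LSspc}, $\overline{X}$ is DC-stationary.

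The genuinely delicate step, which I expect to be the main obstacle, is the matching of spectral subdifferentials on the block of zero singular values of $\overline{X}$: there $\partial\widehat{\psi}^{*}(0)$ is a symmetric interval about $0$, while a true subgradient of $\widehat{\Psi}\circ\sigma$ needs a nonnegative diagonal sharing the SVD pair $(\overline{U},\overline{V})$, and one has to verify through \cite[Corollary 2.5]{Lewis95} and the explicit form \eqref{subdiff-nnorm} of $\partial\|\cdot\|_{*}$ that \eqref{cond1-phi} (respectively the monotonicity of $\phi$) drives the relevant scalar slopes into the admissible range, rather than merely into the biconjugate's. Away from that block everything reduces to bookkeeping with convex conjugacy and the chain rule $\partial\big[(\widehat{\Psi}^{*}\circ\sigma)(\rho\,\cdot)\big](\overline{X})=\rho\,\partial(\widehat{\Psi}^{*}\circ\sigma)(\rho\overline{X})$.
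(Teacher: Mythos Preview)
Your converse direction (EP $\Rightarrow$ DC) is essentially the paper's argument: convexity of $\widehat\psi$ from the monotonicity of $\phi$, then the Fenchel inversion $\rho\overline X\in\partial(\widehat\Psi\circ\sigma)(\overline W)\Leftrightarrow\overline W\in\partial(\widehat\Psi\circ\sigma)^*(\rho\overline X)=\partial(\widehat\Psi^*\circ\sigma)(\rho\overline X)$, and substitute.

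The forward direction (DC $\Rightarrow$ EP) has a genuine gap. You try to close the loop by the biconjugate route, asserting that ``\eqref{cond1-phi} makes $t\mapsto\widehat\psi(t)=\phi(|t|)$ convex, so $(\widehat\Psi\circ\sigma)^{**}=\widehat\Psi\circ\sigma$''. That convexity is \emph{exactly} the hypothesis ``$\phi$ nondecreasing on $[0,1]$'' of the converse direction, not a consequence of \eqref{cond1-phi}; you neither prove it nor does the paper claim it. Without convexity of $\widehat\psi$, Fenchel inversion only gives $\rho\overline X\in\partial(\widehat\Psi\circ\sigma)^{**}(\overline W)$, and you are stuck precisely at the point you flag as ``delicate''.

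The paper avoids the biconjugate entirely. It uses the differentiability part of \eqref{cond1-phi} to force $\partial(\widehat\Psi^*\circ\sigma)(\rho\overline X)=\{\nabla(\widehat\Psi^*\circ\sigma)(\rho\overline X)\}$, then \emph{constructs} $\overline W$ explicitly from an SVD of $\overline X$ via $\overline w_i=(\psi^*)'(\rho\sigma_i(\overline X))$. Since ${\rm range}\,\partial\psi^*\subseteq{\rm dom}\,\psi=[0,1]$ one has $\overline W\in\mathbb B$; Fenchel duality on the \emph{convex} scalar function $\psi$ yields $\rho\sigma_i(\overline X)\in\partial\psi(\overline w_i)$, and then $\partial\psi(\overline w_i)\subseteq\partial\widehat\psi(\overline w_i)$ (trivially for $\overline w_i\in(0,1]$, and by the hypothesis $\partial\psi(0)=\partial\widehat\psi(0)$ when $\overline w_i=0$) gives $\rho\overline X\in\partial(\widehat\Psi\circ\sigma)(\overline W)$ via Lewis's formula. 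Finally the condition $(\psi^*)'=(\widehat\psi^*)'$ on $\mathbb R_+$ identifies this $\overline W$ with $\nabla(\widehat\Psi^*\circ\sigma)(\rho\overline X)$, so it matches the term subtracted in \eqref{DC-equa}. No convexity of $\widehat\psi$ is invoked at any stage; each piece of \eqref{cond1-phi} is used once, in the place just described.
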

  \begin{proof}
   From the symmetry of $\widehat{\psi}$, it follows that
   $\widehat{\psi}^*(s)=\psi^*(|s|)$ for any $s\in\mathbb{R}$.
   Together with the given assumption, we have
   $(\widehat{\psi}^*)'(s)=(\psi^*)'(s)$ for any $s\ge 0$.
   By the differentiability of $\widehat{\psi}^*$ on $\mathbb{R}_{+}$,
   clearly, $\widehat{\Psi}^*$ is differentiable on $\mathbb{R}_{+}^m$.
   Along with its absolute symmetry and convexity,
   from \cite[Theorem 3.1]{Lewis95} it follows that $\widehat{\Psi}^*\circ \sigma$
   is differentiable in $\mathbb{R}^{m\times n}$, and consequently
   \(
     \partial(\widehat{\Psi}^*\circ\sigma)(\rho\overline{X})
     =\{\nabla(\widehat{\Psi}^*\circ\sigma)(\rho\overline{X})\}.
   \)
   Since $\overline{X}$ is a DC-stationary point of \eqref{rank-reg},
   there exists a constant $\rho>0$ such that \eqref{DC-equa} holds.
   Take $(\overline{U},\overline{V})\in\!\mathbb{O}^{m,n}(\overline{X})$. Let
   \[
     \overline{W}\!:=\overline{U}[{\rm Diag}(\overline{w})\ \ 0]\overline{V}^{\mathbb{T}}
     \ \ {\rm with}\ \ \overline{w}_i=(\psi^*)'(\rho\sigma_i(\overline{X}))
     \ \ {\rm for}\ i=1,2,\ldots,m.
   \]
   Since $\psi$ is a closed proper convex function, we have
   ${\rm range}\,\partial\psi^*\subseteq{\rm dom}\psi=[0,1]$ by \cite[Section 23]{Roc70},
   which implies that $\overline{w}_i\in[0,1]$ for $i=1,\ldots,m$
   and consequently $\|\overline{W}\|\le1$. Combining
   $\overline{w}_i=(\psi^*)'(\rho\sigma_i(\overline{X}))$
   with \cite[Corollary 23.5.1]{Roc70}, we obtain
   \[
    \rho\sigma_i(\overline{X})\in\partial\psi(\overline{w}_i)\subseteq\partial\widehat{\psi}(\overline{w}_i)
    \ \ {\rm for}\ \ i=1,2,\ldots,m,
   \]
   where the second inclusion is due to $\overline{w}_i\in[0,1]$ and
   $\partial\psi(0)=\partial\widehat{\psi}(0)$.
   By the definition of $\widehat{\Psi}$, it is not hard to obtain
   $\rho\overline{X}\in\partial(\widehat{\Psi}\circ\sigma)(\overline{W})$.
   Thus, by Definition \ref{EP-spc} and \eqref{DC-equa}, to achieve the first part
   we only need to argue that $\overline{W}=\nabla(\widehat{\Psi}^*\circ\sigma)(\rho\overline{X})$.
   Recall that $\overline{w}_i\in(\psi^*)'(\rho\sigma_i(\overline{X}))$ for each $i$
   and $(\psi^*)'(s)=(\widehat{\psi}^*)'(s)$ for all $s\ge 0$,
   we have $\overline{w}_i=(\widehat{\psi}^*)'(\rho\sigma_i(\overline{X}))$
   for each $i$. This along with the expression of $\widehat{\Psi}^*$ means that $\overline{W}=\nabla(\widehat{\Psi}^*\circ\sigma)(\rho\overline{X})$.

   \medskip

   Now suppose $\overline{X}$ is a EP-stationary point associated to
   $\phi\in\!\mathscr{L}$ with $\phi$ nondecreasing on $[0,1]$.
   Then, there exist $\rho>0$ and $\overline{W}\in\mathbb{B}$ such that
   the inclusions in \eqref{EP-equa} hold. Notice that $\psi$
   is nondecreasing and convex. Hence, $\widehat{\psi}$ is convex.
   Together with its absolute symmetry and convexity, it follows that
   $\widehat{\Psi}$ is absolutely symmetric and convex. From \cite[Corollary 2.5]{Lewis95}
   it follows that $\widehat{\Psi}\circ\sigma$ is convex over $\mathbb{R}^{m\times n}$.
   From $\rho\overline{X}\in\partial(\widehat{\Psi}\circ\sigma)(\overline{W})$,
   we get $\overline{W}\in\partial(\widehat{\Psi}\circ\sigma)^*(\rho\overline{X})$.
   By the von Neumman trace inequality, it is easy to check that
   $(\widehat{\Psi}\circ\sigma)^*=\widehat{\Psi}^*\circ\sigma$, and then
   $\overline{W}\in\partial(\widehat{\Psi}^*\circ\sigma)(\rho\overline{X})$.
   Together with the second inclusion in \eqref{EP-equa} and Definition \ref{Def-LSspc},
   we conclude that $\overline{X}$ is a DC-stationary point of \eqref{rank-reg}.
  \end{proof}

  To sum up the previous discussions, we obtain the relations as shown in Figure 1,
  where $\theta_1$ is the index set defined as in \eqref{EP-detaX} and $\mathscr{L}_2$
  denotes the family of those $\phi\in\!\mathscr{L}$ that is nondecreasing on $[0,1]$.
  We see that the set of R-stationary points is almost same as that of M-stationary points
  and includes that of EP-stationary points under the rank condition
  $|\theta_1|={\rm rank}(\overline{X})$, while for some $\phi$ the set of EP-stationary points
  coincides with that of DC-stationary points, for example, the following special $\phi$.
 \begin{example}
  Let $\phi(t)=\frac{a-1}{a+1}t^2+\frac{2}{a+1}t\ (a>1)$ for $t\in \mathbb{R}$.
  Clearly, $\phi\in\!\mathscr{L}_1\cap\mathscr{L}_2$. Also,
  \begin{equation*}
    \psi(t)=\left\{\!\begin{array}{cl}
                      \frac{a-1}{a+1}t^2+\frac{2}{a+1}t &{\rm if}\ 0\leq t\le 1\\
                       \infty & {\rm otherwise}
                \end{array}\right.{\rm and}\
  \widehat{\psi}(t)=\left\{\!\begin{array}{cl}
                      \frac{a-1}{a+1}t^2+\frac{2}{a+1}t &{\rm if}\  0\leq t\leq 1,\\
                      \frac{a-1}{a+1}t^2-\frac{2}{a+1}t &{\rm if}\ -1\leq t\le 0,\\
                       \infty & {\rm otherwise}.
                \end{array}\right.
  \end{equation*}
   After an elementary calculation, the conjugate $\psi^*$ and $\widehat{\psi}^*$
   of $\psi$ and $\widehat{\psi}$ take the form of
   \begin{equation*}
    \psi^*(\omega)=\left\{\!\begin{array}{cl}
                      0 & \textrm{if}\ \omega\leq \frac{2}{a+1},\\
                       \frac{((a+1)\omega-2)^2}{4(a^2-1)}& \textrm{if}\ \frac{2}{a+1}<\omega\le \frac{2a}{a+1},\\
                      \omega-1 & \textrm{if}\ \omega>\frac{2a}{a+1}.
                \end{array}\right.\ {\rm and}\ \
    \widehat{\psi}^*(\omega)=\psi^*(|\omega|).
   \end{equation*}
  It is easy to check that $\phi$ satisfies the conditions in \eqref{cond1-phi}
  and is nondecreasing in $[0,1]$.
 \end{example}

 \noindent
 \vspace{-0.5cm}
 \begin{figure}[htb]
   \includegraphics[width=0.95\textwidth]{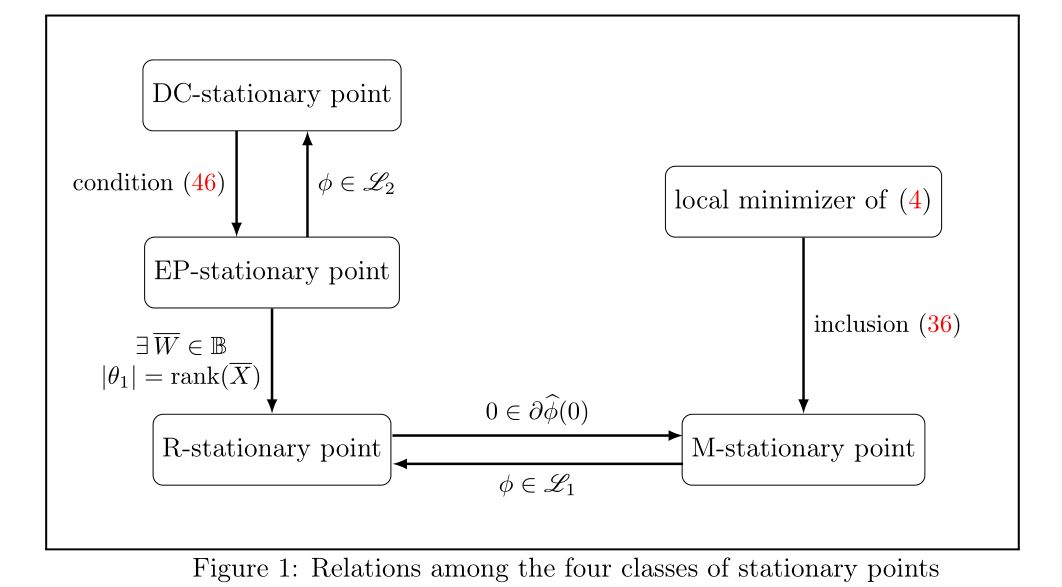}
 \end{figure}

 \section{M-stationary point of MPSCCC}\label{sec4}

  In Section \ref{sec3.1}, the MPEC \eqref{MPEC} is the key
  to characterize the M-stationary point of \eqref{rank-reg}.
  When $\Omega\subseteq\mathbb{S}_{+}^n$, it corresponds to \eqref{SDPMPEC}
  which is a special case of the following MPSCCC
 \begin{align}\label{MPSDCC}
  &\min\limits_{x\in\Omega_x,y\in\Omega_y}\varphi(x,y)\nonumber\\
  &\quad\ {\rm s.t.}\ (f(x,y),g(x,y))\in {\rm gph}\mathcal{N}_{\mathbb{S}^n_+},
 \end{align}
 where $\Omega_x\subseteq\mathbb{X}$ and $\Omega_y\subseteq\mathbb{Y}$ are the closed sets,
 $\varphi\!:\mathbb{X}\times\mathbb{Y}\to \mathbb{R}$ and
 $f,g\!:\mathbb{X}\times \mathbb{Y} \to \mathbb{S}^n$ are smooth functions.
 For this class of problems, since the Robinson CQ does not hold,
 it is common to seek an M-stationary point which is weaker
 than the classical KKT point (also called the strong stationary point).
 In this section, we shall provide a weaker condition for a local minimizer
 of \eqref{MPSDCC} to be the M-stationary point. For this purpose, we need the multifunction
 $\mathcal{\widetilde{M}}\!:\mathbb{X}\times \mathbb{Y}\times\mathbb{S}^n\times\mathbb{S}^n
 \rightrightarrows \mathbb{X}\times \mathbb{Y}$ defined as follows:
 \begin{align}\label{Mdefinity}
  \mathcal{\widetilde{M}}(u,v,\xi,\eta):=\Big\{(x,y)\in \mathbb{X}\times \mathbb{Y}\ |\
  u\in -x+ \Omega_{x},v\in -y+ \Omega_{y}, \nonumber\\
  \qquad(\xi,\eta)\in -(f(x,y),g(x,y))+{\rm gph}\mathcal{N}_{\mathbb{S}^n_+}\Big\}.
 \end{align}
 By \cite[Proposition 2.1 and  Theorem 2.1]{DingSY14}, it is immediate
 to have the following result.
 \begin{theorem}\label{theorem41}
 Let $(\overline{x},\overline{y})$ be a local minimizer of \eqref{MPSDCC}.
 If the perturbed mapping $\mathcal{\widetilde{M}}$ is calm at the origin
 for $(\overline{x},\overline{y})$, then $(\overline{x},\overline{y})$
 is an M-stationary point of the problem \eqref{MPSDCC}.
 \end{theorem}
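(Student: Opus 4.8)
The plan is to read off the M-stationarity system directly from the generalized Fermat rule, using the assumed calmness of $\mathcal{\widetilde{M}}$ as the constraint qualification that legitimizes the limiting-normal-cone calculus. Let $\mathcal{F}:=\{(x,y)\in\Omega_x\times\Omega_y\,|\,(f(x,y),g(x,y))\in{\rm gph}\,\mathcal{N}_{\mathbb{S}^n_+}\}$ denote the feasible set of \eqref{MPSDCC}; it is closed (as $\Omega_x,\Omega_y$ are closed, $f,g$ are continuous and ${\rm gph}\,\mathcal{N}_{\mathbb{S}^n_+}$ is closed) and equals $\mathcal{\widetilde{M}}(0,0,0,0)$. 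First I would invoke \cite[Theorem 10.1]{RW98} together with the smoothness of $\varphi$ to obtain $0\in\nabla\varphi(\overline{x},\overline{y})+\mathcal{N}_{\mathcal{F}}(\overline{x},\overline{y})$.

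The core of the argument is to expand $\mathcal{N}_{\mathcal{F}}(\overline{x},\overline{y})$. By \cite[Theorem 3H.3]{DR09}, the calmness of $\mathcal{\widetilde{M}}$ at the origin for $(\overline{x},\overline{y})$ is equivalent to the metric subregularity of $\mathcal{\widetilde{M}}^{-1}$ at $(\overline{x},\overline{y})$ for the origin, i.e., to a local error bound for the constraint system defining $\mathcal{F}$. This is precisely the qualification under which the intersection/chain rule for the limiting normal cone holds (\cite[Proposition 2.1]{DingSY14}; cf. \cite[Page 211]{Ioffe08}), so that
\[
  \mathcal{N}_{\mathcal{F}}(\overline{x},\overline{y})\subseteq
  \bigcup_{(\Lambda,\Upsilon)}\Big\{\big(\nabla_x f(\overline{x},\overline{y})^*\Lambda+\nabla_x g(\overline{x},\overline{y})^*\Upsilon,\ \nabla_y f(\overline{x},\overline{y})^*\Lambda+\nabla_y g(\overline{x},\overline{y})^*\Upsilon\big)\Big\}
  +\mathcal{N}_{\Omega_x}(\overline{x})\times\mathcal{N}_{\Omega_y}(\overline{y}),
\]
the union being over $(\Lambda,\Upsilon)\in\mathcal{N}_{{\rm gph}\,\mathcal{N}_{\mathbb{S}^n_+}}(f(\overline{x},\overline{y}),g(\overline{x},\overline{y}))$, where $\nabla f(\cdot)^*$ and $\nabla g(\cdot)^*$ denote the adjoints of the Jacobians.

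Finally I would substitute this inclusion into the Fermat condition and separate the $x$- and $y$-components: this produces a pair $(\Lambda,\Upsilon)\in\mathcal{N}_{{\rm gph}\,\mathcal{N}_{\mathbb{S}^n_+}}(f(\overline{x},\overline{y}),g(\overline{x},\overline{y}))$ satisfying the two KKT-type inclusions $0\in\nabla_x\varphi+\nabla_x f^*\Lambda+\nabla_x g^*\Upsilon+\mathcal{N}_{\Omega_x}(\overline{x})$ and $0\in\nabla_y\varphi+\nabla_y f^*\Lambda+\nabla_y g^*\Upsilon+\mathcal{N}_{\Omega_y}(\overline{y})$ (all derivatives evaluated at $(\overline{x},\overline{y})$). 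Re-expressing the membership of $(\Lambda,\Upsilon)$ via the coderivative $D^*\mathcal{N}_{\mathbb{S}^n_+}(f(\overline{x},\overline{y})\,|\,g(\overline{x},\overline{y}))$ then casts the system into the form defining an M-stationary point of \eqref{MPSDCC} (cf. \cite[Theorem 2.1]{DingSY14} and the semidefinite instance recorded in Remark \ref{optimality-Mspc}).

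The hard part is the second step, namely verifying that calmness of $\mathcal{\widetilde{M}}$ — equivalently, metric subregularity of the constraint mapping at the reference point — indeed forces $\mathcal{N}_{\mathcal{F}}$ to split off the product cone $\mathcal{N}_{\Omega_x}(\overline{x})\times\mathcal{N}_{\Omega_y}(\overline{y})$ and to absorb the smooth data $f,g$ through a chain rule; this is exactly the content of \cite[Proposition 2.1]{DingSY14}, and once it is available the remainder is a routine manipulation of Jacobian adjoints and the definition of the coderivative.
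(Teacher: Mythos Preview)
Your proposal is correct and follows exactly the route the paper takes: the paper does not give a self-contained proof but simply invokes \cite[Proposition 2.1 and Theorem 2.1]{DingSY14}, which is precisely the calmness-based normal-cone calculus and M-stationarity derivation you have unpacked. Your outline is thus a faithful expansion of that citation.
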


 By \cite[Corollary 1]{GK16}, one may achieve the calmness of $\widetilde{M}$
 at the origin for $(\overline{x},\overline{y})$ by the directional
 limiting normal cone to ${\rm gph}\mathcal{N}_{\mathbb{S}^n_+}$. That is,
 the following result holds.
\begin{theorem}\label{DireNorm}
 Consider an arbitrary $(\overline{x},\overline{y})\in \mathcal{\widetilde{M}}(0,0,0,0)$.
 If for any $0\ne w=(w_1;w_2)\in\!\mathcal{T}_{\Omega_{x}}(\overline{x})
 \times\mathcal{T}_{\Omega_{y}}(\overline{y})$ such that
 \(
   \left(\begin{matrix}f'(\overline{x},\overline{y})\\
   g'(\overline{x},\overline{y})\end{matrix}\right)w
  \in\!\mathcal{T}_{{\rm gph}\mathcal{N}_{\mathbb{S}^n_+}}(f(\overline{x},\overline{y}),g(\overline{x},\overline{y})),
 \)
 the implication holds:
\begin{align}\label{DireInclusion}
 \left.\begin{array}{r}
 d_1\in\mathcal{N}_{\Omega_{x}}(\overline{x};w_1),\,
 d_1+\nabla_{x} f(\overline{x},\overline{y})\Lambda+\nabla_{x} g(\overline{x},\overline{y})\Delta=0  \\
  d_2\in\mathcal{N}_{\Omega_{y}}(\overline{y};w_2),\,
 d_2+\nabla_{y} f(\overline{x},\overline{y})\Lambda+\nabla_{y}g(\overline{x},\overline{y})\Delta =0\\
  \widetilde{w}=( f _x'(\overline{x},\overline{y})w_1+f_{y}'(\overline{x},\overline{y})w_2,
  g_x'(\overline{x},\overline{y})w_1+g_{y}'(\overline{x},\overline{y})w_2)\\
 (\Lambda,\Delta)\in\mathcal{N}_{{\rm gph}\mathcal{N}_{\mathbb{S}^n_+}}((f(\overline{x},\overline{y}),g(\overline{x},\overline{y}));\widetilde{w})
 \end{array}\right\}\Longrightarrow
 \left(\begin{matrix}
  d_1\\ d_2\\ \Lambda\\ \Delta
  \end{matrix}\right)=0,
 \end{align}
 then the multifunction $\widetilde{M}$ is calm at the origin for $(\overline{x},\overline{y})$.
 \end{theorem}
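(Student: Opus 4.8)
The plan is to recognise the claim as an instance of the first-order sufficient condition for metric subregularity of a smooth constraint system, and to obtain it by a direct application of \cite[Corollary 1]{GK16}.

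First I would rewrite the feasibility system in standard form. Introduce the $C^1$ mapping $G(x,y):=(x,y,f(x,y),g(x,y))$ with values in $\mathbb{X}\times\mathbb{Y}\times\mathbb{S}^n\times\mathbb{S}^n$ and the closed set $C:=\Omega_x\times\Omega_y\times{\rm gph}\,\mathcal{N}_{\mathbb{S}^n_+}$. Reading off \eqref{Mdefinity}, one has $\widetilde{M}^{-1}(x,y)=C-G(x,y)$, whence $\widetilde{M}(0,0,0,0)=G^{-1}(C)$ and ${\rm dist}\big(0,\widetilde{M}^{-1}(x,y)\big)={\rm dist}\big(G(x,y),C\big)$. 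Since ${\rm gph}\,\widetilde{M}$ is closed ($f,g$ are continuous and $\Omega_x,\Omega_y,{\rm gph}\,\mathcal{N}_{\mathbb{S}^n_+}$ are closed), \cite[Theorem 3H.3]{DR09} shows that calmness of $\widetilde{M}$ at the origin for $(\overline{x},\overline{y})$ is equivalent to metric subregularity of $\widetilde{M}^{-1}$ at $(\overline{x},\overline{y})$ for $0$, which by the above identities is precisely metric subregularity of the constraint system $G(\cdot)\in C$ at $(\overline{x},\overline{y})$.

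Then I would invoke \cite[Corollary 1]{GK16}, which guarantees this metric subregularity (equivalently, the desired calmness) as soon as, for every $0\neq w\in\mathbb{X}\times\mathbb{Y}$ with $G'(\overline{x},\overline{y})w\in\mathcal{T}_C(G(\overline{x},\overline{y}))$, the implication
\[
  \lambda\in\mathcal{N}_C\big(G(\overline{x},\overline{y});\,G'(\overline{x},\overline{y})w\big),\ \ G'(\overline{x},\overline{y})^{*}\lambda=0\ \Longrightarrow\ \lambda=0
\]
is valid. The remaining work is to verify that this abstract condition is exactly \eqref{DireInclusion}. Writing $w=(w_1;w_2)$, one has $G'(\overline{x},\overline{y})w=\big(w_1,w_2,f'(\overline{x},\overline{y})w,g'(\overline{x},\overline{y})w\big)$ with $f'(\overline{x},\overline{y})w=f_x'(\overline{x},\overline{y})w_1+f_y'(\overline{x},\overline{y})w_2$ and likewise for $g$; and since the contingent cone and the directional limiting normal cone of a Cartesian product are the Cartesian products of the corresponding cones, the admissibility requirement $G'(\overline{x},\overline{y})w\in\mathcal{T}_C(G(\overline{x},\overline{y}))$ unwinds into $w_1\in\mathcal{T}_{\Omega_x}(\overline{x})$, $w_2\in\mathcal{T}_{\Omega_y}(\overline{y})$ and $\big(f'(\overline{x},\overline{y})w,g'(\overline{x},\overline{y})w\big)\in\mathcal{T}_{{\rm gph}\,\mathcal{N}_{\mathbb{S}^n_+}}(f(\overline{x},\overline{y}),g(\overline{x},\overline{y}))$, i.e.\ the standing restriction on $w$ in the statement. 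Splitting $\lambda=(d_1,d_2,\Lambda,\Delta)$ accordingly, membership in the directional normal cone of $C$ becomes $d_1\in\mathcal{N}_{\Omega_x}(\overline{x};w_1)$, $d_2\in\mathcal{N}_{\Omega_y}(\overline{y};w_2)$ and $(\Lambda,\Delta)\in\mathcal{N}_{{\rm gph}\,\mathcal{N}_{\mathbb{S}^n_+}}\big((f(\overline{x},\overline{y}),g(\overline{x},\overline{y}));\widetilde{w}\big)$ with $\widetilde{w}$ as in \eqref{DireInclusion}, while the adjoint identity $G'(\overline{x},\overline{y})^{*}(d_1,d_2,\Lambda,\Delta)=\big(d_1+\nabla_x f(\overline{x},\overline{y})\Lambda+\nabla_x g(\overline{x},\overline{y})\Delta,\ d_2+\nabla_y f(\overline{x},\overline{y})\Lambda+\nabla_y g(\overline{x},\overline{y})\Delta\big)$ turns $G'(\overline{x},\overline{y})^{*}\lambda=0$ into the two equalities in \eqref{DireInclusion}. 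Hence \eqref{DireInclusion} is exactly the hypothesis of \cite[Corollary 1]{GK16}, and the conclusion follows.

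I do not expect a substantive obstacle here, since the argument is essentially a translation exercise; the two points that need to be stated carefully are the product rules for $\mathcal{T}_C$ and $\mathcal{N}_C(\cdot;\cdot)$ — these are what actually connect the abstract criterion of \cite[Corollary 1]{GK16} with the componentwise implication \eqref{DireInclusion} — and the (routine) check that $G$ and $C$ meet the smoothness/closedness hypotheses under which \cite[Corollary 1]{GK16} applies, which holds because $f,g$ are $C^1$ near $(\overline{x},\overline{y})$ and the three component sets are closed.
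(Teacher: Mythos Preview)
Your proposal is correct and follows essentially the same approach as the paper: the paper also obtains Theorem~\ref{DireNorm} as a direct application of \cite[Corollary~1]{GK16}, without providing further detail. Your write-up simply makes explicit the translation between the abstract constraint system $G(\cdot)\in C$ and the componentwise implication~\eqref{DireInclusion}, which the paper leaves to the reader.
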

 \begin{remark}\label{PSDremark}
  {\bf(i)} Notice that
 $(\Delta x, \Delta y)\in D\mathcal{\widetilde{M}}((0,0,0,0)|(\overline{x},\overline{y}))
 (\Delta u,\Delta v,\Delta \xi, \Delta \eta)$ iff
 \begin{subnumcases}{}\label{M-tagent1}
  \Delta\omega_1:=f_x'(\overline{x},\overline{y})\Delta x+f_{y}'(\overline{x},\overline{y})\Delta y+\Delta\xi,\\
  \Delta\omega_2:=g_x'(\overline{x},\overline{y})\Delta x+g_{y}'(\overline{x},\overline{y})\Delta y+\Delta \eta,\\
  \Delta u+\Delta x\in \mathcal{T}_{\Omega_x}(\overline{x}), \Delta v+\Delta y\in \mathcal{T}_{\Omega_y}(\overline{y}),\\
  \Delta\omega_2\in D\mathcal{N}_{\mathbb{S}^n_+}(f(\overline{x},\overline{y})|g(\overline{x},\overline{y}))(\Delta\omega_1).
 \end{subnumcases}
 Together with Lemma \ref{chara-icalm}, there is no nonzero $ w=(w_1;w_2)\in
 (\mathcal{T}_{\Omega_{x}}(\overline{x})\times\mathcal{T}_{\Omega_{y}}(\overline{y}))$
 such that
 \(
   \left(\begin{matrix}f'(\overline{x},\overline{y})\\
   g'(\overline{x},\overline{y})\end{matrix}\right)w
  \in \mathcal{T}_{{\rm gph}\mathcal{N}_{\mathbb{S}^n_+}}(f(\overline{x},\overline{y}),g(\overline{x},\overline{y}))
 \)
 if and only if $\mathcal{\widetilde{M}}$ is isolated calmness
 at the origin for $(\overline{x},\overline{y})$. Thus, Theorem \ref{DireNorm}
 is stating that if $\mathcal{\widetilde{M}}$ is not isolated calm and
 the implication in \eqref{DireInclusion} holds, then $\mathcal{\widetilde{M}}$
 is necessarily calm at the origin for $(\overline{x},\overline{y})$.

 \medskip
 \noindent
 {\bf(ii)} Notice that
 $(\Delta u,\Delta v,\Delta \xi, \Delta \eta)\in D^*\mathcal{\widetilde{M}}((\overline{x},\overline{y})|(0,0,0,0))
 (-\Delta x, -\Delta y)$ if and only if
 \begin{subnumcases}{}\label{M-normal1}
 \Delta x\in \nabla_x f(\overline{x},\overline{y})\Delta \xi+\nabla_{x} g(\overline{x},\overline{y})\Delta \eta+\Delta u,
 \ \Delta u\in  \mathcal{N}_{\Omega_{x}}(\overline{x}), \\
 \label{M-normal2}
 \Delta y\in  \nabla_y f(\overline{x},\overline{y})\Delta \xi+\nabla_{y} g(\overline{x},\overline{y})\Delta \eta+\Delta v,
 \ \Delta v\in  \mathcal{N}_{\Omega_{y}}(\overline{y}),\\
 \label{M-normal3}
 \Delta \xi\in D^*\mathcal{N}_{\mathbb{S}^n_+}(f(\overline{x},\overline{y})|g(\overline{x},\overline{y}))(-\Delta \eta).
 \end{subnumcases}
 Together with Lemma \ref{chara-Aubin}, the Aubin property of $\mathcal{\widetilde{M}}$ is equivalent
 to the implication
 \begin{align}\label{AubinInclusion}
 \left.\begin{array}{r}
 0\in \nabla_x f(\overline{x},\overline{y})\Delta \xi+\nabla_{x} g(\overline{x},\overline{y})\Delta \eta
 +\Delta u\\
 0\in  \nabla_y f(\overline{x},\overline{y})\Delta \xi+\nabla_{y} g(\overline{x},\overline{y})\Delta \eta
 +\Delta v\\
 \Delta u\in \mathcal{N}_{\Omega_{x}}(\overline{x}), \Delta v\in \mathcal{N}_{\Omega_{y}}(\overline{y}) \\
 (\Delta\xi,\Delta \eta)\in \mathcal{N}_{{\rm gph}\mathcal{N}_{\mathbb{S}^n_+}}(f(\overline{x},\overline{y}),g(\overline{x},\overline{y}))
 \end{array}
\right\}\Longrightarrow (\Delta u, \Delta v,\Delta \xi, \Delta \eta)=0.
\end{align}
 Since $\mathcal{N}_{{\rm gph}\mathcal{N}_{\mathbb{S}^n_+}}((f(\overline{x},\overline{y}),g(\overline{x},\overline{y}));(d_1,d_2))
 \subset\!\mathcal{N}_{{\rm gph}\mathcal{N}_{\mathbb{S}^n_+}}(f(\overline{x},\overline{y}),g(\overline{x},\overline{y}))$
 for $(d_1,d_2)\in\!\mathbb{S}^n\times \mathbb{S}^n$, the implication in \eqref{DireInclusion}
 is weaker than the one in \eqref{AubinInclusion} which is precisely the M-stationary
 point condition given in \cite[Theorem 6.1(i)]{DingSY14}. For the characterization
 of the directional normal cone $\mathcal{N}_{{\rm gph}\mathcal{N}_{\mathbb{S}^n_+}}((f(\overline{x},\overline{y}),g(\overline{x},\overline{y}));(d_1,d_2))$,
 the reader refers to Appendix.
 \end{remark}

  To close this section, we illustrate Theorem \ref{DireNorm} by the following special example
 \begin{align}\label{example}
  &\min\limits_{x\in\mathbb{R}^3,y\in\mathbb{R}^3}\|x\|^2+\|y\|^3\nonumber\\
  &\quad\ {\rm s.t.}\ \ (\mathcal{A}(x)+C,\mathcal{A}(y)+D)\in {\rm gph}\mathcal{N}_{\mathbb{S}_{+}^3},
 \end{align}
 where $C={\rm Diag}(1,0,0)$, $D={\rm Diag}(0,0,-1)$,
 and $\mathcal{A}\!:\mathbb{R}^3\to\mathbb{S}^3$ is the linear mapping
 \[
  \mathcal{A}(x):=\left(\begin{matrix}
                        x_1 & x_3 & x_2\\
                       x_3 & x_2 & x_1\\
                        x_2 & x_1 & x_3
                       \end{matrix}\right)\quad\ \forall x\in\mathbb{R}^3.
 \]
 Consider $\overline{x}=(0,0,0)^{\mathbb{T}}$ and $\overline{y}=(0,0,0)^{\mathbb{T}}$.
 Write $X:=f(\overline{x},\overline{y})$ and $Y:=g(\overline{x},\overline{y})$.
 Clearly, $(X,Y)=(C,D)\in {\rm gph}\mathbb{S}_{+}^3$. Moreover,
 the index sets $\alpha,\beta$ and $\gamma$ defined by \eqref{index}
 with $A=X+Y$ satisfy $\alpha=\{1\}, \beta=\{2\}$ and $\gamma=\{3\}.$
 Fix an arbitrary
 $0\ne w=(w_1;w_2)\in\mathbb{R}^3\times \mathbb{R}^3$
 with $w_1=(w_{11},w_{12},w_{13})^{\mathbb{T}}$
 and $w_2=(w_{21},w_{22},w_{23})^{\mathbb{T}}$ such that
 $(G,H)\in\!\mathcal{T}_{{\rm gph}\mathcal{N}_{\mathbb{S}^n_+}}(f(\overline{x},\overline{y}),
 g(\overline{x},\overline{y}))$, where
 $G=f_x'(\overline{x},\overline{y})w_1+f_{y}'(\overline{x},\overline{y})w_2$ and
 $H=g_x'(\overline{x},\overline{y})w_1+g_{y}'(\overline{x},\overline{y})w_2$.
 Since $(G,H)\in\!\mathcal{T}_{{\rm gph}\mathcal{N}_{\mathbb{S}^n_+}}(f(\overline{x},\overline{y}),
 g(\overline{x},\overline{y}))$, by the expressions of $f$ and $g$ it is not hard to obtain
 \[
  G=\left(\begin{matrix}
    0 & 0& w_{12}\\
   0&w_{12}&0 \\
   w_{12} & 0 &0\end{matrix}\right)\ \ {\rm and}\ \
   H= \left(\begin{matrix}
   0 & 0 & w_{22}\\
   0& w_{22} &0 \\
  w_{22} & 0 & 0
  \end{matrix}\right).
 \]
 with $0\leq w_{12}\perp w_{22}\leq 0$ and $w_{12}+w_{22}\neq 0$.
 Then $B:=\!P_{\beta}^{\mathbb{T}}(G\!+\!H)P_{\beta}=w_{12}+w_{22}$.
 Let $(d_1,d_2)\in\mathbb{R}^3\times\mathbb{R}^3$ and
 $(\Lambda,\Delta)\in\mathbb{S}^3\times\mathbb{S}^3$
 satisfy the conditions on the left hand side of \eqref{DireInclusion}.
 Since $\Omega_{x}=\Omega_{y}=\mathbb{R}^3$, we have $(d_1,d_2)=0$. Thus,
 \begin{align}\label{temp-equa31}
   &\nabla_{x} f(\overline{x},\overline{y})\Lambda+\nabla_{x} g(\overline{x},\overline{y})\Delta
   =\left(\begin{matrix}
             \Lambda_{11}+2\Lambda_{23}\\
              \Lambda_{22}+2\Lambda_{13}\\
               2\Lambda_{12}+\Lambda_{33}
        \end{matrix}\right)=0,\\
 \label{temp-equa32}
   &\nabla_{y} f(\overline{x},\overline{y})\Lambda+\nabla_{y}g(\overline{x},\overline{y})\Delta
   =\left(\begin{matrix}
        \Delta_{11}+2\Delta_{23}\\
                    \Delta_{22}+2\Delta_{13}\\
                    2\Delta_{12}+\Delta_{33}
                  \end{matrix}\right)=0.
 \end{align}

 \medskip
 \noindent
 {\bf Case 1: $w_{12}>0$.} Now we have $w_{22}=0$, and
 the index sets $\pi,\delta$ and $\nu$ defined by \eqref{pinu} with $B$
 satisfy $\pi=\{1\},\delta=\emptyset$ and $\nu=\emptyset$.
 From Theorem \ref{cc-coderivate} in Appendix, it follows that
 $(\Lambda,\Delta)\in\!\mathcal{N}_{{\rm gph}\mathcal{N}_{\mathbb{S}_{+}^3}}((X,Y);(G,H))$
 if and only if
 \[
  \Lambda=\left(\begin{matrix}
   0 & 0& \Lambda_{13}\\
   0&0&\Lambda_{23} \\
   \Lambda_{13}&\Lambda_{23}&\Lambda_{33}
   \end{matrix}\right)\ \ {\rm and}\ \
   \Delta=\left(\begin{matrix}
   \Delta_{11} & \Delta_{12}& \Delta_{13}\\
   \Delta_{12}&\Delta_{22}&0 \\
   \Delta_{13}&0&0
   \end{matrix}\right)\ {\rm with}\ \Lambda_{13}+\Delta_{13}=0.
  \]
 Together with \eqref{temp-equa31} and \eqref{temp-equa32},
 we get $\Lambda=0$ and $\Delta=0$. Thus, $(d_1,d_2,\Lambda,\Delta)=(0,0,0,0)$.

 \medskip
 \noindent
 {\bf Case 2: $w_{12}=0$.} Now we have $w_{22}<0$, and consequently
 the index sets $\pi,\delta$ and $\nu$ defined by \eqref{pinu} with $B$
 satisfy $\pi=\emptyset,\delta=\emptyset$ and $\nu=\{1\}$.
 From Theorem \ref{cc-coderivate} in Appendix, it follows that
 $(\Lambda,\Delta)\in\!\mathcal{N}_{{\rm gph}\mathcal{N}_{\mathbb{S}_{+}^3}}((X,Y);(G,H))$
 if and only if
 \[
  \Lambda=\left(\begin{matrix}
   0 & 0& \Lambda_{13}\\
   0&\Lambda_{22}&\Lambda_{23} \\
   \Lambda_{13}&\Lambda_{23}&\Lambda_{33}
   \end{matrix}\right)\ \ {\rm and}\ \
   \Delta=\left(\begin{matrix}
   \Delta_{11} & \Delta_{12}& \Delta_{13}\\
   \Delta_{12}&0&0 \\
   \Delta_{13}&0&0
   \end{matrix}\right)\ {\rm with}\ \Lambda_{13}+\Delta_{13}=0.
  \]
  Together with \eqref{temp-equa31} and \eqref{temp-equa32},
  we get $\Lambda=0$ and $\Delta=0$. Thus, $(d_1,d_2,\Lambda,\Delta)=(0,0,0,0)$.

  \medskip

  The above arguments show that the implication \eqref{DireInclusion} holds,
  and then the condition in Theorem \ref{DireNorm} is satisfied. Thus, the global
  minimizer $(\overline{x},\overline{y})$ is a M-stationary point of \eqref{example},
  but by \cite[Theorem 6.1(i)]{DingSY14} we can not judge whether
  $(\overline{x},\overline{y})$ is a M-stationary or not.

  \bigskip
  \noindent
  {\bf Acknowledgement} This work is supported by the National Natural Science
  Foundation of China under project No.11571120 and No.11701186.


 \bigskip
 \noindent
 {\large\bf Appendix:  Directional limiting normal cone to ${\rm gph}\mathcal{N}_{\mathbb{S}_{+}^n}$}

 \medskip
 \noindent
 In this part we characterize the directional limiting normal cone to
 ${\rm gph}\mathcal{N}_{\mathbb{S}_{+}^n}$. For this purpose, for a given
 $A\in\mathbb{S}^n$, we denote by $\lambda(A)\in\mathbb{R}^n$ the eigenvalue
 vector of $A$ arranged in a nonincreasing order and write
 $\mathbb{O}^n(A):=\{P\in\mathbb{O}^n\ |\ A=P{\rm Diag}(\lambda(A))P^{\mathbb{T}}\}$.

 \medskip

 Fix an arbitrary $(X,Y)\in{\rm gph}\mathcal{N}_{\mathbb{S}_{+}^n}$
 and write $A\!=X\!+Y$. Suppose that $A$ has the eigenvalue decomposition
 $A=P{\rm Diag}(\lambda(A))P^\mathbb{T}$ where $P\in\mathbb{O}^n(A)$.
 Define the index sets
 \begin{equation}\label{index}
   \alpha:=\{i\,|\,\ \lambda_i(A)>0\},\ \beta:=\{i\,|\,\ \lambda_i(A)=0\}\ \ {\rm and}\ \
   \gamma:=\{i\,|\,\ \lambda_i(A)<0\}.
 \end{equation}
 For given $G,H\in\mathbb{S}^n$, by the eigenvalues of
 $B\!:=\!P_{\beta}^{\mathbb{T}}(G\!+\!H)P_{\beta}$ we define the index sets:
 \begin{equation}\label{pinu}
  \pi\!:=\!\big\{i\in[1,|\beta|]\,|\, \lambda_i(B)\!>0\big\},
  \delta\!:=\!\big\{i\in[1,|\beta|]\,|\,\lambda_i(B)=0\big\},
  \nu\!:=\!\big\{i\in[1,|\beta|]\,|\, \lambda_i(B)\!<0\big\}.
 \end{equation}
 For the index set $\delta$, we denote the set of all partitions of $\delta$ by
 $\mathscr{P}(\delta)$. Define the set
 \[
   \mathbb{R}_{\gtrsim}^{|\delta|}:=\big\{z\in\mathbb{R}^{|\delta|}\!:\ z_1\ge\cdots\ge z_{|\delta|}\big\}.
 \]
 For any $z\in\mathbb{R}_{\gtrsim}^{|\delta|}$, let $D(z)\in\mathbb{S}^{|\delta|}$ denote
 the first generalized divided difference matrix of $h(t)=\max(t,0)$ at $z$,
 which is defined as
 \begin{equation}\label{Ddiff-matrix1}
   (D(z))_{ij}:=\left\{\begin{array}{cl}
               \!\frac{\max(0,z_i)-\max(0,z_j)}{z_i-z_j}\in[0,1]&{\rm if}\ z_i\ne z_j,\\
                 1 &{\rm if}\ z_i=z_j>0,\\
                 0 & {\rm otherwise}.
                \end{array}\right.
 \end{equation}
 Write
  \(
   \mathcal{U}_{|\beta|}
   :=\big\{\overline{\Omega}\in\mathbb{S}^{|\delta|}\!:\ \overline{\Omega}=\lim_{k\to\infty}D(z^k),\,
   z^k\to 0,\, z^k\in\mathbb{R}_{\gtrsim}^{|\delta|}\big\}.
  \)
  Let $\widehat{\Xi}_1\in\mathcal{U}_{|\delta|}$. By the definition of
  $\mathcal{U}_{|\delta|}$, there exists a partition
  $(\delta_{+},\delta_{0},\delta_{-})\in\mathscr{P}(\delta)$ such that
  \[
    \widehat{\Xi}_1=\!\left[\begin{matrix}
       E_{\delta_{+}\delta_{+}}&E_{\delta_{+}\delta_{0}}& (\widehat{\Xi}_1)_{\delta_{+}\delta_{-}}\\
       E_{\delta_{0}\delta_{+}}&0& 0\\
       (\widehat{\Xi}_1)^\mathbb{T}_{\delta_{+}\delta_{-}}&0&0
       \end{matrix}\right]\in\mathbb{S}^{|\delta|}
  \]
  where every entry of $(\widehat{\Xi}_1)_{\delta_{+}\delta_{-}}$ belongs to $[0,1]$.
  We also write $\widehat{\Xi}_2=E-\widehat{\Xi}_1$, i.e.,
  \[
    \widehat{\Xi}_2:=\!\left[\begin{matrix}
       0&0& E_{\delta_{+}\delta_{-}}\!-\!(\widehat{\Xi}_1)_{\delta_{+}\delta_{-}}\\
       0&0& E_{\delta_{0}\delta_{-}}\\
       (E_{\delta_{+}\delta_{-}}\!-\!(\widehat{\Xi}_1)_{\delta_{+}\delta_{-}})^{\mathbb{T}}
       &E_{\delta_{-}\delta_{0}}&E_{\delta_{-}\delta_{-}}
       \end{matrix}\right].
  \]
  Next we provide a characterization for the direction limiting
  normal cone to ${\rm {gph}\mathcal{N}_{\mathbb{S}^n_+}}$.
  \begin{atheorem}\label{cc-coderivate}
   Consider an arbitrary point $(X,Y)\in{\rm gph}\,\mathcal{N}_{\mathbb{S}_{+}^n}$.
   Let $A=X+Y$ have the eigenvalue decomposition as above with $\alpha,\beta,\gamma$
   defined by \eqref{index}. Then, for any given $(G,H)\in\mathbb{S}^n\times\mathbb{S}^n$
   with $\pi,\delta$ and $\nu$ defined as in \eqref{pinu} for $B:=\!P_{\beta}^{\mathbb{T}}(G+H)P_{\beta}$,
   it holds that $(X^*,Y^*)\in\!\mathcal{N}_{{\rm gph}\mathcal{N}_{\mathbb{S}_{+}^n}}((X,Y);(G,H))$
   if and only if $(G,H)$ and $(X^*,Y^*)$ satisfy
   \begin{subnumcases}{}\label{GHmatrix1}
    G=P\left[\begin{matrix}
         \widetilde{G}_{\alpha\alpha} & \widetilde{G}_{\alpha\beta}&\widetilde{G}_{\alpha\gamma}\\
        \widetilde{G}_{\alpha\beta}^{\mathbb{T}}&\widetilde{G}_{\beta\beta}& 0\\
        \widetilde{G}_{\alpha\gamma}^{\mathbb{T}}& 0 & 0
       \end{matrix}\right]P^{\mathbb{T}},\ \
    H=P\left[\begin{matrix}
        0 & 0&\widetilde{H}_{\alpha\gamma}\\
        0 &\widetilde{H}_{\beta\beta}&\widetilde{H}_{\beta\gamma}\\
        \widetilde{H}_{\alpha\gamma}^{\mathbb{T}}& \widetilde{H}_{\beta\gamma}^{\mathbb{T}} &\widetilde{H}_{\gamma\gamma}
       \end{matrix}\right]P^{\mathbb{T}},\\
   \label{GHmatrix2}
   (E_{\alpha\gamma}\!-\!\Sigma_{\alpha\gamma})\circ\widetilde{G}_{\alpha\gamma}+\Sigma_{\alpha\gamma}\circ\widetilde{H}_{\alpha\gamma}=0
   ,\ \widetilde{G}_{\beta\beta}=\Pi_{\mathbb{S}_{+}^{|\beta|}}(\widetilde{G}_{\beta\beta}\!+\!\widetilde{H}_{\beta\beta})
  \end{subnumcases}
  and
 \begin{subnumcases}{}\label{XYstar1}
    X^*\!=P\left[\begin{matrix}
           0 & 0 & \widetilde{X}_{\alpha\gamma}^*\\
           0 & \widetilde{X}_{\beta\beta}^*&\widetilde{X}_{\beta\gamma}^*\\
          (\widetilde{X}_{\alpha\gamma}^*)^{\mathbb{T}}&(\widetilde{X}_{\beta\gamma}^*)^{\mathbb{T}}
          &\widetilde{X}_{\gamma\gamma}^*
          \end{matrix}\right]P^{\mathbb{T}},\
    Y^*\!=P\left[\begin{matrix}
           \widetilde{Y}_{\alpha\alpha}^* &  \widetilde{Y}_{\alpha\beta}^* & \widetilde{Y}_{\alpha\gamma}^*\\
           (\widetilde{Y}_{\alpha\beta}^*)^{\mathbb{T}} & \widetilde{Y}_{\beta\beta}^*&0\\
          (\widetilde{Y}_{\alpha\gamma}^*)^{\mathbb{T}}&0 & 0
          \end{matrix}\right]P^{\mathbb{T}},\\
  \label{XYstar2}
   \Sigma_{\alpha\gamma}\circ\widetilde{X}_{\alpha\gamma}^*
   +(E_{\alpha\gamma}\!-\!\Sigma_{\alpha\gamma})\circ(\widetilde{Y}_{\alpha\gamma}^*)=0,\
  (\widetilde{X}_{\beta\beta}^*,\widetilde{Y}_{\beta\beta}^*)\in
  \mathcal{N}_{{\rm gph}\,\mathcal{N}_{\mathbb{S}_{+}^{|\beta|}}}(0,0)\ {\rm with}\\
  \mathcal{N}_{{\rm gph}\,\mathcal{N}_{\mathbb{S}_{+}^{|\beta|}}}(0,0)
   \!=\!\bigcup_{Q\in\mathbb{O}^{|\beta|}\atop\widehat{\Xi}_1\in\mathcal{U}_{|\delta|}}
    \!\left\{(U^*,V^*)\ \Big| \left.\begin{array}{ll}
                             \Xi_1\circ(Q^{\mathbb{T}}U^*Q)+\Xi_2\circ(Q^{\mathbb{T}}V^*Q)=0,\\
                             \qquad Q_{{\delta}_0}^{\mathbb{T}}U^*Q_{{\delta}_0}\preceq 0,\
                                Q_{{\delta}_0}^{\mathbb{T}}V^*Q_{{\delta}_0}\succeq 0
                             \end{array}\right.\!\right\}
  \end{subnumcases}
  where
   \[
    \Xi_1:=\!\left[\begin{matrix}
       E_{\pi\pi} &E_{\pi\delta}& \Gamma_{\pi\nu}\\
       E_{\delta\pi} &\widehat{\Xi}_1& 0\\
       (\Gamma_{\pi\nu})^{\mathbb{T}} &0& 0\\
       \end{matrix}\right]\ \ {\rm and}\ \
     \Xi_2:=\!\left[\begin{matrix}
       0_{\pi\pi} &0_{\pi\delta}& E_{\pi\nu}- \Gamma_{\pi\nu}\\
       0_{\delta\pi} &\widehat{\Xi}_2& E_{\delta\pi}\\
       (E_{\pi\nu}- \Gamma_{\pi\nu})^{\mathbb{T}} &E_{\nu\delta}&E_{\nu\nu}\\
       \end{matrix}\right]
   \]
  with $\Gamma_{ij}=\frac{\max(0,\lambda_i(B))-\max(0,\lambda_j(B))}{\lambda_i(B)-\lambda_j(B)}$
  for each $(i,j)\in\pi\times\nu$.
 \end{atheorem}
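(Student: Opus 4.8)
The plan is to compute $\mathcal{N}_{{\rm gph}\mathcal{N}_{\mathbb{S}_{+}^n}}((X,Y);(G,H))$ directly from the definition of the directional limiting normal cone, using eigenvalue perturbation theory together with the reduction structure of ${\rm gph}\,\mathcal{N}_{\mathbb{S}_{+}^n}$ near a complementarity point. The argument runs parallel to the one for ${\rm gph}\,\partial\|\cdot\|_*$ behind Lemma~\ref{coderivative} (see \cite{LiuPan19}), with the simplification that a single orthogonal factor $P$ here replaces the pair $(U,V)$.

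First I would identify the system \eqref{GHmatrix1}--\eqref{GHmatrix2} with membership $(G,H)\in\mathcal{T}_{{\rm gph}\mathcal{N}_{\mathbb{S}_{+}^n}}(X,Y)$. Writing $\widetilde{G}=P^{\mathbb{T}}GP$ and $\widetilde{H}=P^{\mathbb{T}}HP$ and invoking the known description of the contingent cone to the graph of $\mathcal{N}_{\mathbb{S}_{+}^n}$ (see \cite{DingSY14,DingST14}): the blocks $\widetilde{G}_{\gamma\gamma},\widetilde{G}_{\beta\gamma}$ and $\widetilde{H}_{\alpha\alpha},\widetilde{H}_{\alpha\beta}$ must vanish, the mixed blocks obey $(E_{\alpha\gamma}-\Sigma_{\alpha\gamma})\circ\widetilde{G}_{\alpha\gamma}+\Sigma_{\alpha\gamma}\circ\widetilde{H}_{\alpha\gamma}=0$, and the $\beta\beta$ block satisfies the critical-cone complementarity $(\widetilde{G}_{\beta\beta},\widetilde{H}_{\beta\beta})\in{\rm gph}\,\mathcal{N}_{\mathbb{S}_{+}^{|\beta|}}$, i.e. $\widetilde{G}_{\beta\beta}=\Pi_{\mathbb{S}_{+}^{|\beta|}}(\widetilde{G}_{\beta\beta}+\widetilde{H}_{\beta\beta})$. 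Since $\mathcal{N}_{S}(\bar z;u)=\emptyset$ whenever $u\notin\mathcal{T}_{S}(\bar z)$, this already forces \eqref{GHmatrix1}--\eqref{GHmatrix2}; more importantly it identifies the reduced problem, namely that along such a direction the graph ``looks like'' ${\rm gph}\,\mathcal{N}_{\mathbb{S}_{+}^{|\beta|}}$ on the $\beta$-block, further perturbed by the first-order eigenvalue splitting of $B=P_{\beta}^{\mathbb{T}}(G+H)P_{\beta}$.

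Next, fixing $t_k\downarrow 0$, $(G^k,H^k)\to(G,H)$ and setting $X^k:=X+t_kG^k$, $Y^k:=Y+t_kH^k$, $A^k:=X^k+Y^k$, I would carry out a two-tier spectral perturbation analysis. For $k$ large the $\alpha$-eigenvalues of $A^k$ stay positive and the $\gamma$-eigenvalues stay negative; the $|\beta|$ eigenvalues that were $0$ behave to leading order like $t_k\lambda_j(B^k)$ with $B^k:=P^{k\mathbb{T}}_{\beta}(G^k+H^k)P^{k}_{\beta}$ for a suitable $P^k\in\mathbb{O}^n(A^k)$ with $P^k\to P$, so that the $\pi$-group becomes positive, the $\nu$-group negative, and the residual $\delta$-group is governed by a second level of divided differences of $h(t)=\max(t,0)$ whose accumulation points lie in $\mathcal{U}_{|\delta|}$; this is exactly the mechanism producing $\mathcal{U}_{|\delta|}$ and the partition $(\delta_{+},\delta_{0},\delta_{-})$, and the eigenvalue/eigenvector estimates are those of \cite{Lan64,Torki01,Sun02} as used in \cite{LiuPan19}. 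At each $(X^k,Y^k)\in{\rm gph}\,\mathcal{N}_{\mathbb{S}_{+}^n}$ the nonzero eigenvalues of $A^k$ are now separated down to the $\delta$-block, so $\widehat{\mathcal{N}}_{{\rm gph}\mathcal{N}_{\mathbb{S}_{+}^n}}(X^k,Y^k)$ has the explicit block form with divided-difference weights $\Gamma^k$ on the $\pi\nu$-block and the $\delta\delta$-block constrained to $\widehat{\mathcal{N}}_{{\rm gph}\mathcal{N}_{\mathbb{S}_{+}^{|\delta|}}}(0,0)$. Letting $k\to\infty$: $\Gamma^k\to\Gamma$, a subsequence of the degenerate-block orthogonal factors converges to some $Q\in\mathbb{O}^{|\beta|}$, the $\delta$-block divided-difference matrices converge to some $\widehat{\Xi}_1\in\mathcal{U}_{|\delta|}$ with its partition $(\delta_{+},\delta_{0},\delta_{-})$, and the definiteness constraints on the $\delta_0$-block survive as $Q_{\delta_0}^{\mathbb{T}}U^*Q_{\delta_0}\preceq 0$, $Q_{\delta_0}^{\mathbb{T}}V^*Q_{\delta_0}\succeq 0$. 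This yields the inclusion ``$\subseteq$'' asserted in the theorem.

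For the reverse inclusion I would reverse-engineer the approximating sequences: given $(G,H)$ and $(X^*,Y^*)$ satisfying \eqref{GHmatrix1}--\eqref{GHmatrix2} and \eqref{XYstar1}--\eqref{XYstar2}, pick $Q$ and $\widehat{\Xi}_1\in\mathcal{U}_{|\delta|}$ realizing the union in \eqref{XYstar2}, choose $z^k\to 0$ in $\mathbb{R}_{\gtrsim}^{|\delta|}$ with $D(z^k)\to\widehat{\Xi}_1$, build $A^k$ (hence $X^k,Y^k$) whose $\delta$-block eigenvalues are $t_kz^k$ for a fixed $t_k\downarrow 0$, verify $(X^k,Y^k)\in{\rm gph}\,\mathcal{N}_{\mathbb{S}_{+}^n}$ with $\big((X^k,Y^k)-(X,Y)\big)/t_k\to(G,H)$, and check that the prescribed $(X^{k*},Y^{k*})\to(X^*,Y^*)$ lie in $\widehat{\mathcal{N}}_{{\rm gph}\mathcal{N}_{\mathbb{S}_{+}^n}}(X^k,Y^k)$ via the regular-normal-cone formula from the previous step. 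I expect the main obstacle to be precisely this two-level spectral bookkeeping: controlling the eigenvectors (not just eigenvalues) of $A^k$ on the degenerate block $\beta$, showing that the second-order ratios on the residual block $\delta$ have all accumulation points inside $\mathcal{U}_{|\delta|}$ and that every such limit is attainable, and lining up the orthogonal matrices $P^k$ and $Q^k$ simultaneously. This is the technical heart of the proof, and since it is the same machinery as in Lemma~\ref{coderivative} I would import it from \cite{LiuPan19} rather than redo it from scratch.
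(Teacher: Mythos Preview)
Your proposal is correct and follows essentially the same approach as the paper: both directions proceed by the two-tier eigenvalue perturbation of $A^k=A+t_k(G^k+H^k)$ (with the $\beta$-eigenvalues governed to first order by $B^k$ via \cite{Lan64,Torki01}), combined with the explicit regular-normal-cone formula for ${\rm gph}\,\mathcal{N}_{\mathbb{S}_+^n}$ from \cite{WZZ14} at each perturbed point and then passing to the limit, while the converse is handled by building approximating directions with prescribed $\delta$-block eigenvalues $z^k$ realizing a given $\widehat{\Xi}_1\in\mathcal{U}_{|\delta|}$. One small correction: the limit of $P^k\in\mathbb{O}^n(A^k)$ is not $P$ itself but $[P_\alpha\ P_\beta Q\ P_\gamma]$ for some $Q\in\mathbb{O}^{|\beta|}$, which is exactly where the free orthogonal matrix $Q$ in the union in \eqref{XYstar2} comes from; you allude to this later but should not write $P^k\to P$.
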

 \begin{proof}
   ``$\Longrightarrow$''. Since $(X^*,Y^*)\in\!\mathcal{N}_{{\rm gph}\mathcal{N}_{\mathbb{S}_{+}^n}}((X,Y);(G,H))$,
  there exist sequences $t_k\downarrow 0$ and $(G^k,H^k,X^k,Y^k)\to(G,H,X^*,Y^*)$ with
  \(
    (X^k,Y^k)\in\widehat{\mathcal{N}}_{{\rm gph}\,\mathcal{N}_{\mathbb{S}_{+}^n}}(X\!+t_kG^k,Y\!+t_kH^k)
  \)
  for each $k$. Since $(X\!+t_kG^k,Y\!+t_kH^k)\in{\rm gph}\,\mathcal{N}_{\mathbb{S}_{+}^n}$ for each $k$,
  it holds that
  \[
    X+t_kG^k=\Pi_{\mathbb{S}_{+}^n}(X+Y+t_k(G^k\!+\!H^k)).
  \]
  Notice that $X=\Pi_{\mathbb{S}_{+}^n}(X\!+\!Y)$ since $(X,Y)\in{\rm gph}\,\mathcal{N}_{\mathbb{S}_{+}^n}$
  and the projection operator $\Pi_{\mathbb{S}_{+}^n}(\cdot)$ is directionally differentiable everywhere
  in the Hadamard sense. Taking the limit to the last equality, we obtain
  \(
    G=\Pi_{\mathbb{S}_{+}^n}'(X\!+\!Y;G\!+\!H).
  \)
  By the expression of $\Pi_{\mathbb{S}_{+}^n}'(X\!+\!Y;G\!+\!H)$ in \cite{Sun02},
  it follows that $G$ and $H$ satisfy \eqref{GHmatrix1}-\eqref{GHmatrix2}.
  Write $A^k\!:=(X+Y)+t_k(G^k+H^k)$. For each $k$, let $A^k$ have
  the spectral decomposition $A^k=(P^k)^{\mathbb{T}}{\rm Diag}(\lambda(A^k))P^k$
  with $P^k\in\mathbb{O}^n(A^k)$. Since $\lambda(A)=\lim_{k\to\infty}\lambda(A^k)$,
  we have $\lambda_i(A^k)>0$ for $i\in\alpha$ and $\lambda_i(A^k)<0$ for $i\in\gamma$
  when $k$ is sufficiently large, and $\lim_{k\to\infty}\lambda_i(A^k)=0$ for $i\in\beta$.
  Since $\{P^k\}$ is bounded, we may assume (if necessary taking a subsequence) that $\{P^k\}$ converges to
  $\widehat{P}\in\mathbb{O}^n(A)$. Since $P\in\mathbb{O}^n(A)$, there exists $Q\in\mathbb{O}^{|\beta|}$
  such that $\widehat{P}=[P_{\alpha}\ \ P_{\beta}Q\ \ P_{\gamma}]$. We assume
  (if necessary taking a subsequence) that there is a partition $(\beta_{+},\beta_{0},\beta_{-})$
  of $\beta$ such that
  \[
    \lambda_i(A^k)>0\ \ \forall i\in\beta_{+},\ \lambda_i(A^k)=0\ \ \forall i\in\beta_{0}\ \ {\rm and}\ \
    \lambda_i(A^k)<0\ \ \forall i\in\beta_{-}\quad{\rm for\ each}\ k.
  \]
  In addition, from \cite[Theorem 7]{Lan64} or \cite[Proposition 1.4]{Torki01}, it follows that
  \begin{equation}\label{Ak-beta}
    \lambda_i(A^k)=t_k\lambda_{l_i}\big[P_{\beta}^{\mathbb{T}}(G^k\!+\!H^k)P_{\beta}\big]+o(t_k)
    \quad\ \forall i\in\beta
  \end{equation}
  where $l_i$ is the number of eigenvalues that are equal to $\lambda_i(A)$
  but are ranked before $i$ (including $i$). Write $B^k:=P_{\beta}^{\mathbb{T}}(G^k\!+\!H^k)P_{\beta}$
  for each $k$. Since $\lambda(B)=\lim_{k\to\infty}\lambda(B^k)$, we have $\lambda_i(B^k)>0$
  for $i\in\pi$ and $\lambda_i(B^k)<0$ for $i\in\nu$ when $k$ is sufficiently large,
  and $\lim_{k\to\infty}\lambda_i(B^k)=0$ for $i\in\delta$. By further taking a subsequence if necessary,
  we may assume that there exists a partition $(\delta_{+},\delta_{0},\delta_{-})$
  of $\delta$ such that for each $k$,
  \[
    \lambda_i(B^k)>0\ \ \forall i\in\delta_{+},\ \lambda_i(B^k)=0\ \ \forall i\in\delta_{0}\ \ {\rm and}\ \
    \lambda_i(B^k)<0\ \ \forall i\in\delta_{-}.
  \]
  This means that $\pi\cup\delta_{+}=\beta_{+}-|\alpha|$ and $\nu\cup\delta_{-}=\beta_{-}-|\alpha|$,
  and then $\delta_0=\beta_0-|\alpha|$. For convenience,
  we write $\overline{\pi}=\pi+|\alpha|,\,\overline{\delta}=\delta+|\alpha|$ and
  $\overline{\nu}=\nu+|\alpha|$. Then, we have
  \[
   \{i\,|\,\ \lambda_i(A^k)>0\}=\alpha\cup\overline{\pi}\cup\overline{\delta}_{+},\
   \{i\,|\,\ \lambda_i(A^k)=0\}=\overline{\delta}_0,\
   \{i\,|\,\ \lambda_i(A^k)<0\}=\overline{\delta}_{-}\cup\overline{\nu}\cup\gamma.
  \]
   Since
  \(
    (X^k,-Y^k)\in\widehat{\mathcal{N}}_{{\rm gph}\,\mathcal{N}_{\mathbb{S}_{+}^n}}(X\!+t_kG^k,Y\!+t_kH^k),
  \)
  by \cite[Corollary 3.2]{WZZ14} it follows that
 \begin{equation}\label{temp-equa0}
   \Theta_1^k\circ\widetilde{X}^k+\Theta_2^k\circ(-\widetilde{Y}^k)=0,\ \widetilde{X}_{\overline{\delta}_{0}\overline{\delta}_{0}}^k\succeq 0
   \ \ {\rm and}\ \ \widetilde{Y}_{\overline{\delta}_{0}\overline{\delta}_{0}}^k\succeq 0
  \end{equation}
  with $\widetilde{X}^k=(P^k)^{\mathbb{T}}X^kP^k$ and $\widetilde{Y}^k=(P^k)^{\mathbb{T}}Y^kP^k$,
  where $\Theta_1^k$ and $\Theta_2^k$ take the following form
  \begin{equation}\label{Theta1k}
    \Theta_1^k=\left[\begin{matrix}
    E_{\alpha\alpha}&E_{\alpha\overline{\pi}}&E_{\alpha\overline{\delta}_{+}}&E_{\alpha\overline{\delta}_0}
    &\Sigma_{\alpha\overline{\delta}_{-}}^k&\Sigma_{\alpha\overline{\nu}}^k&\Sigma_{\alpha\gamma}^k\\
    E_{\overline{\pi}\alpha}&E_{\overline{\pi}\overline{\pi}}&E_{\overline{\pi}\overline{\delta}_{+}}&E_{\overline{\pi}\overline{\delta}_0}
    &\Sigma_{\overline{\pi}\overline{\delta}_{-}}^k&\Sigma_{\overline{\pi}\overline{\nu}}^k&\Sigma_{\overline{\pi}\gamma}^k\\
    E_{\overline{\delta}_{+}\alpha}&E_{\overline{\delta}_{+}\overline{\pi}}&E_{\overline{\delta}_{+}\overline{\delta}_{+}}
    &E_{\overline{\delta}_{+}\overline{\delta}_0}
    &\Sigma_{\overline{\delta}_{+}\overline{\delta}_{-}}^k&\Sigma_{\overline{\delta}_{+}\overline{\nu}}^k&\Sigma_{\overline{\delta}_{+}\gamma}^k\\
    E_{\overline{\delta}_0\alpha}&E_{\overline{\delta}_0\overline{\pi}}&E_{\overline{\delta}_0\overline{\delta}_{+}}&0&0&0 &0\\
    (\Sigma_{\alpha\overline{\delta}_{-}}^k)^{\mathbb{T}}&(\Sigma_{\overline{\pi}\overline{\delta}_{-}}^k)^{\mathbb{T}}
    &(\Sigma_{\overline{\delta}_{+}\overline{\delta}_{-}}^k)^{\mathbb{T}}&0&0&0&0\\
   (\Sigma_{\alpha\overline{\nu}}^k)^{\mathbb{T}}&(\Sigma_{\overline{\pi}\overline{\nu}}^k)^{\mathbb{T}}
   &(\Sigma_{\overline{\delta}_{+}\overline{\nu}}^k)^{\mathbb{T}}&0 & 0&0 &0\\
    (\Sigma_{\alpha\gamma}^k)^{\mathbb{T}}&(\Sigma_{\overline{\pi}\gamma}^k)^{\mathbb{T}}
    &(\Sigma_{\overline{\delta}_{+}\gamma}^k)^{\mathbb{T}}&0 & 0&0 &0
    \end{matrix}\right]
  \end{equation}
  and
  \begin{equation}\label{Theta2k}
   \!\Theta_2^k\!=\!\left[\begin{matrix}
    0_{\alpha\alpha}&0_{\alpha\overline{\pi}}&0_{\alpha\overline{\delta}_{+}}&0_{\alpha\overline{\delta}_{0}}
    &\widetilde{\Sigma}_{\alpha\overline{\delta}_{-}}^k&\widetilde{\Sigma}_{\alpha\overline{\nu}}^k &\widetilde{\Sigma}_{\alpha\gamma}^k\\
    0_{\overline{\pi}\alpha}&0_{\overline{\pi}\overline{\pi}}&0_{\overline{\pi}\overline{\delta}_{+}}&0_{\overline{\pi}\overline{\delta}_{0}}
    &\widetilde{\Sigma}_{\overline{\pi}\overline{\delta}_{-}}^k&\widetilde{\Sigma}_{\overline{\pi}\overline{\nu}}^k
    &\widetilde{\Sigma}_{\overline{\pi}\gamma}^k\\
    0_{\overline{\delta}_{+}\alpha}&0_{\overline{\delta}_{+}\overline{\pi}}&0_{\overline{\delta}_{+}\overline{\delta}_{+}}
    &0_{\overline{\delta}_{+}\overline{\delta}_{0}} &\widetilde{\Sigma}_{\overline{\delta}_{+}\overline{\delta}_{-}}^k
    &\widetilde{\Sigma}_{\overline{\delta}_{+}\overline{\nu}}^k&\widetilde{\Sigma}_{\overline{\delta}_{+}\gamma}^k\\
    0_{\overline{\delta}_0\alpha}&0_{\overline{\delta}_{0}\overline{\pi}}&0_{\overline{\delta}_{0}\overline{\delta}_{+}}
    &0_{\overline{\delta}_0\overline{\delta}_0}&E_{\overline{\delta}_0\overline{\delta}_{-}}&E_{\overline{\delta}\overline{\nu}}&E_{\overline{\delta}\gamma}\\
    (\widetilde{\Sigma}_{\alpha\overline{\delta}_{-}}^k)^{\mathbb{T}}
    &(\widetilde{\Sigma}_{\overline{\pi}\overline{\delta}_{-}}^k)^{\mathbb{T}}
    &(\widetilde{\Sigma}_{\overline{\delta}_{+}\overline{\delta}_{-}}^k)^{\mathbb{T}}
    &E_{\overline{\delta}_{-}\overline{\delta}_{0}}&E_{\overline{\delta}_{-}\overline{\delta}_{-}}
    &E_{\overline{\delta}_{-}\overline{\nu}} &E_{\overline{\delta}_{-}\gamma} \\
    (\widetilde{\Sigma}_{\alpha\overline{\nu}}^k)^{\mathbb{T}}
    &(\widetilde{\Sigma}_{\overline{\pi}\overline{\nu}}^k)^{\mathbb{T}}
    &(\widetilde{\Sigma}_{\overline{\delta}_{+}\overline{\nu}}^k)^{\mathbb{T}}
    &E_{\overline{\nu}\overline{\delta}_{0}}&E_{\overline{\nu}\overline{\delta}_{-}}
     &E_{\overline{\nu}\overline{\nu}}& E_{\overline{\nu}\gamma}\\
    (\widetilde{\Sigma}_{\alpha\gamma}^k)^{\mathbb{T}}&(\widetilde{\Sigma}_{\overline{\pi}\gamma}^k)^{\mathbb{T}}
    &(\widetilde{\Sigma}_{\overline{\delta}_{+}\gamma}^k)^{\mathbb{T}}
    &E_{\gamma\overline{\delta}_{0}} &E_{\gamma\overline{\delta}_{-}}& E_{\gamma\overline{\nu}}&E_{\gamma\gamma}
    \end{matrix}\right]
  \end{equation}
  with $\Sigma_{ij}^k\!:=\!\frac{\max(0,\lambda_i(A^k))-\max(0,\lambda_j(A^k))}{\lambda_i(A^k)-\lambda_j(A^k)}$
  and $\widetilde{\Sigma}_{ij}^k\!:=E_{ij}^k-\!\Sigma_{ij}^k$.
  It is immediate to have that
  \[
    \widetilde{X}^k\to
    \!\left[\begin{matrix}
      \widetilde{X}_{\alpha\alpha}^*& \widetilde{X}_{\alpha\beta}^*Q&\widetilde{X}_{\alpha\gamma}^*\\
      Q^{\mathbb{T}}(\widetilde{X}_{\alpha\beta}^*)^{\mathbb{T}}& Q^{\mathbb{T}}\widetilde{X}_{\beta\beta}^*Q&Q^{\mathbb{T}}\widetilde{X}_{\beta\gamma}^*\\
      (\widetilde{X}_{\alpha\gamma}^*)^{\mathbb{T}}& (\widetilde{X}_{\beta\gamma}^*)^{\mathbb{T}}Q&\widetilde{X}_{\gamma\gamma}^*\\
      \end{matrix}\right]\ {\rm and}\
    \widetilde{Y}^k\to
    \!\left[\begin{matrix}
      \widetilde{Y}_{\alpha\alpha}^*& \widetilde{Y}_{\alpha\beta}^*Q&\widetilde{Y}_{\alpha\gamma}^*\\
      Q^{\mathbb{T}}(\widetilde{Y}_{\alpha\beta}^*)^{\mathbb{T}}& Q^{\mathbb{T}}\widetilde{Y}_{\beta\beta}^*Q&Q^{\mathbb{T}}\widetilde{Y}_{\beta\gamma}^*\\
      (\widetilde{Y}_{\alpha\gamma}^*)^{\mathbb{T}}& (\widetilde{Y}_{\beta\gamma}^*)^{\mathbb{T}}Q&\widetilde{Y}_{\gamma\gamma}^*\\
      \end{matrix}\right].
  \]
  By the expression of $\Sigma_{ij}^k$ for
  $(i,j)\in(\alpha\cup\overline{\pi}\cup\overline{\delta}_{+})\times(\overline{\delta}_{-}\cup\overline{\nu}\cup\gamma)$
  and equation \eqref{Ak-beta},
  \begin{align*}
    \lim_{k\to\infty}\Sigma_{\alpha\overline{\delta}_{-}}^k=E_{\alpha\overline{\delta}_{-}},\
    \lim_{k\to\infty}\Sigma_{\alpha\overline{\nu}}^k\!=E_{\alpha\overline{\nu}},\
    \lim_{k\to\infty}\Sigma_{\alpha\gamma}^k\!=\Sigma_{\alpha\gamma},
     \lim_{k\to\infty}\Sigma_{\overline{\delta}_{+}\overline{\nu}}^k\!=0_{\overline{\delta}_{+}\overline{\nu}},\\
    \lim_{k\to\infty}\Sigma_{\overline{\pi}\overline{\delta}_{-}}^k\!=E_{\overline{\pi}\overline{\delta}_{-}},\
     \lim_{k\to\infty}\Sigma_{\overline{\pi}\overline{\nu}}^k\!=\Sigma_{\overline{\pi}\overline{\nu}},\
    \lim_{k\to\infty}\Sigma_{\overline{\pi}\gamma}^k\!=0_{\overline{\pi}\gamma},
    \lim_{k\to\infty}\Sigma_{\overline{\delta}_{+}\gamma}^k\!=0_{\overline{\delta}_{+}\gamma},
  \end{align*}
  where $\Sigma_{ij}=\frac{\max(0,\lambda_{i-|\alpha|}(B))-\max(0,\lambda_{j-|\alpha|}(B))}{\lambda_{i-|\alpha|}(B)-\lambda_{j-|\alpha|}(B)}$
  for $(i,j)\in\overline{\pi}\times\overline{\nu}$.
  Thus, there exists $\widehat{\Xi}_1\in\mathcal{U}_{|\delta|}$ such that
  $\widehat{\Xi}_1$ and the associated matrices $\widehat{\Xi}_2$, $\Xi_1$ and $\Xi_2$ satisfy
  \[
    \lim_{k\to\infty}\Theta_1^k=
    \Theta_1+\left[\begin{matrix}
     0_{\alpha\alpha}&0_{\alpha\beta}&0_{\alpha\gamma}\\
     0_{\beta\alpha}&\Xi_1&0\\
     0_{\alpha\gamma}&0&0\\
     \end{matrix}\right]\ \ {\rm and}\ \
    \lim_{k\to\infty}\Theta_2^k=
     \Theta_2+\left[\begin{matrix}
     0_{\alpha\alpha}&0_{\alpha\beta}&0_{\alpha\gamma}\\
     0_{\beta\alpha}&\Xi_2&0\\
     0_{\alpha\gamma}&0&0\\
     \end{matrix}\right].
  \]
  Taking the limit $k\to\infty$ to \eqref{temp-equa0},
  we have that $(X^*,Y^*)$ satisfies the condition \eqref{XYstar1}-\eqref{XYstar2}.

  \medskip
  \noindent
 ``$\Longleftarrow$''. Let $(G,H)$ and $(X^*,Y^*)$ satisfy \eqref{GHmatrix1}-\eqref{GHmatrix2}
 and \eqref{XYstar1}-\eqref{XYstar2}, respectively. We shall prove that there exist sequences
 $t_k\downarrow 0$ and  $(G^k,H^k,X^k,Y^k)\to(G,H,X^*,Y^*)$ with
  \(
    (X^k,-Y^k)\in\widehat{\mathcal{N}}_{{\rm gph}\,\mathcal{N}_{\mathbb{S}_{+}^n}}(X\!+t_kG^k,Y\!+t_kH^k)
  \)
  for each $k$. Let $B$ have the spectral decomposition
  $B=U{\rm Diag}(\lambda(B))U^{\mathbb{T}}$ and write $\widehat{P}=[P_{\alpha}\ \ P_{\beta}U\ \ P_{\gamma}]$.
  Since $(\widetilde{X}_{\beta\beta}^*,-\widetilde{Y}_{\beta\beta}^*)\in
  \mathcal{N}_{{\rm gph}\,\mathcal{N}_{\mathbb{S}_{+}^{|\beta|}}}(0,0)$,
  there exist an orthogonal matrix $Q\in\mathbb{O}^{|\beta|}$ and
  $\widehat{\Xi}_1\in\mathcal{U}_{|\delta|}$ such that
  \begin{equation}\label{temp-equa01}
   \Xi_1\circ Q^{\mathbb{T}}\widetilde{X}_{\beta\beta}^*Q
   +\Xi_2\circ(-Q^{\mathbb{T}}\widetilde{Y}_{\beta\beta}^*Q)=0,\,
   Q_{\delta_{0}}^{\mathbb{T}}\widetilde{X}_{\beta\beta}^*Q_{\delta_{0}}\succeq 0
   \ \ {\rm and}\ \ Q_{\delta_{0}}^{\mathbb{T}}\widetilde{Y}_{\beta\beta}^*Q_{\delta_{0}}\succeq 0.
  \end{equation}
  Since $\widehat{\Xi}_1\in\mathcal{U}_{|\delta|}$, we know that there exists
  a sequence $\{z^k\}\in\mathbb{R}_{\gtrsim}^{|\delta|}$ converging to $0$
  such that $\widehat{\Xi}_1=\lim_{k\to\infty}D(z^k)$. Without loss of generality,
  we can assume that there exists a partition $(\delta_{+},\delta_{0},\delta_{-})\in\mathscr{P}(\delta)$
  such that for all $k$,
  \[
    z_i^k>0\quad \forall i\in\delta_{+},\quad z_i^k=0\quad \forall i\in\delta_{0}
    \ \ {\rm and}\ \ z_i^k<0\quad \forall i\in\delta_{-}.
  \]
  For each $k$, with $x^k=[\lambda_{\pi}(B);z_{\delta_{+}}^k;0_{\delta_{0}};0_{\delta_{-}};0_{\nu}]$
  and $y^k=[0_{\pi};0_{\delta_{+}};0_{\delta_{0}};z_{\delta_{-}}^k;\lambda_{\nu}(B)]$, define
  \[
    \widehat{G}^k=\widehat{P}\left[\begin{matrix}
       \widetilde{G}_{\alpha\alpha}&\widetilde{G}_{\alpha\beta}&\widetilde{G}_{\alpha\gamma}\\
       \widetilde{G}_{\alpha\beta}^{\mathbb{T}}&{\rm Diag}(x^k)&0\\
       \widetilde{G}_{\alpha\gamma}^{\mathbb{T}}&0& 0
       \end{matrix}\right]\widehat{P}^{\mathbb{T}}\ \ {\rm and}\ \
    \widehat{H}^k=\widehat{P}\left[\begin{matrix}
       0_{\alpha\alpha}&0_{\alpha\beta}&\widetilde{H}_{\alpha\gamma}\\
       0_{\beta\alpha}&{\rm Diag}(y^k)&\widetilde{H}_{\beta\gamma}\\
       \widetilde{H}_{\alpha\gamma}^{\mathbb{T}}&\widetilde{H}_{\beta\gamma}^{\mathbb{T}}& \widetilde{H}_{\gamma\gamma}
    \end{matrix}\right]\widehat{P}^{\mathbb{T}}.
  \]
  Clearly, for each $k$, $(\widehat{G}^k,\widehat{H}^k)$ satisfies equation
  \eqref{GHmatrix1}-\eqref{GHmatrix2}, which is equivalent to saying
  that $(\widehat{G}^k,\widehat{H}^k)\in\mathcal{T}_{{\rm gph}\,\mathcal{N}_{\mathbb{S}_{+}^n}}(X,Y)$.
  Thus, for each $k$, there exist $t_{k_j}\downarrow 0$ and $(\widehat{G}^{k_j},\widehat{H}^{k_j})\to(\widehat{G}^k,\widehat{H}^k)$
  as $j\to\infty$ such that $(X,Y)+t_{k_j}(\widehat{G}^{k_j},\widehat{H}^{k_j})\in{\rm gph}\,\mathcal{N}_{\mathbb{S}_{+}^n}$ for each $j$.
  By this, we can find sequences $t_k\downarrow 0$ and $(G^k,H^k)\to(G,H)$ such that
  $(X+t_kG^k,Y+t_kH^k)\in{\rm gph}\,\mathcal{N}_{\mathbb{S}_{+}^n}$ for each $k$.
  For each $k$, write $A^k=X+Y+t_k(G^k+H^k)$ and define $\Theta_1^k\in\mathbb{S}^n$
  and $\Theta_2^k\in\mathbb{S}^n$ as in \eqref{Theta1k} and \eqref{Theta2k}, respectively,
  except that $\Sigma_{\alpha\gamma}^k$ is replaced by $\Sigma_{\alpha\gamma}$.
  By the proof of the previous necessity, for all sufficiently large $k$
  (if necessary taking a subsequence of $\{A^k\}$),
  \[
    \{i\,|\, \lambda_i(A^k)>0\}=\alpha\cup\overline{\pi}\cup\overline{\delta}_{+},\,
    \{i\,|\,\lambda_i(A^k)=0\}=\overline{\delta}_{0},\
    \{i\,|\, \lambda_i(A^k)<0\}=\overline{\delta}_{-}\cup\overline{\nu}\cup\gamma
  \]
  where $\overline{\pi}$ and $\overline{\nu}$ are the same as before and $\overline{\delta}=\delta+|\alpha|$.
  Next for each $k$ we shall define the matrices $\widetilde{X}^k\in\mathbb{S}^n$ and
  $\widetilde{Y}^k\in\mathbb{S}^n$. Let $i,j\in\{1,2,\ldots,n\}$. If $(i,j)$ and $(j,i)$

  \medskip
  \noindent
  {\bf Case 1:} $(i,j)$ or $(j,i)\in\alpha\times(\overline{\delta}_{-}\cup\overline{\nu})$.
  In this case, we have $\widetilde{X}_{ij}^*=0$ by \eqref{XYstar1}. Define
  \begin{equation}\label{abminus}
    \widetilde{Y}_{ij}^k\equiv\widetilde{Y}_{ij}^*\ \ {\rm and}\ \
    \widetilde{X}_{ij}^k=\frac{1-\Sigma_{ij}^k}{\Sigma_{ij}^k}\widetilde{Y}_{ij}^k.
  \end{equation}
  Since $\Sigma_{ij}^k\to 1$ in this case, it immediately follows that
  $(\widetilde{X}_{ij}^k,\widetilde{Y}_{ij}^k)\to (\widetilde{X}_{ij}^*,\widetilde{Y}_{ij}^*)$.

  \medskip
  \noindent
  {\bf Case 2:} $(i,j)$ or $(j,i)\in\overline{\pi}\times\gamma$. Now we have
  $\widetilde{Y}_{ij}^*=0$ by equation \eqref{XYstar1}. Define
  \begin{equation}\label{bpg}
    \widetilde{X}_{ij}^k\equiv\widetilde{X}_{ij}^*\ \ {\rm and}\ \
    \widetilde{Y}_{ij}^k=\frac{\Sigma_{ij}^k}{1-\Sigma_{ij}^k}\widetilde{X}_{ij}^k.
  \end{equation}
  Notice that $\Sigma_{ij}^k\to 0$ in this case. It immediately follows that
  $(\widetilde{X}_{ij}^k,\widetilde{Y}_{ij}^k)\to (\widetilde{X}_{ij}^*,\widetilde{Y}_{ij}^*)$.

   \medskip
  \noindent
  {\bf Case 3:} $(i,j)$ or $(j,i)\in\overline{\pi}\times\overline{\nu}$. Now
  $\Sigma_{ij}^k=\frac{\lambda_i(A^k)}{\lambda_i(A^k)^k-\lambda_j(A^k)}
  =\frac{\lambda_{l_i}(G^k+H^k)}{\lambda_{l_i}(G^k+H^k)-\lambda_{l_j}(G^k+H^k)}$ for each $k$.
  Together with the definition of $\Xi_1$, $\Sigma_{ij}^k\to(\Xi_1)_{i'j'}$ with $i'=i-|\alpha|$ and $j'=j-|\alpha|$.

  \medskip
  \noindent
  {\bf Subcase 3.1:} $(\Xi_1)_{ij}\ne 1$. Then $\Sigma_{ij}^k\ne 1$ for
  all sufficiently large $k$. We define
  \begin{equation}\label{bpbm1}
    \widetilde{X}_{ij}^k\equiv Q_i^{\mathbb{T}}\widetilde{X}_{\beta\beta}^*Q_j\ \ {\rm and}\ \
    \widetilde{Y}_{ij}^k=\frac{\Sigma_{ij}^k}{1-\Sigma_{ij}^k}\widetilde{X}_{ij}^k.
  \end{equation}
  From equation \eqref{temp-equa01}, it follows that
  $\widetilde{Y}_{ij}^k\to\frac{(\Xi_1)_{ij}}{1-(\Xi_1)_{ij}}Q_i^{\mathbb{T}}\widetilde{X}_{\beta\beta}^*Q_j
  =Q_i^{\mathbb{T}}\widetilde{Y}_{\beta\beta}^*Q_j$.

   \medskip
  \noindent
  {\bf Subcase 3.2:} $(\Xi_1)_{ij}=1$. Since $\Sigma_{ij}^k\ne 0$ for all sufficiently large $k$, we define
  \begin{equation}\label{bpbm2}
    \widetilde{Y}_{ij}^k\equiv Q_i^{\mathbb{T}}\widetilde{Y}_{\beta\beta}^*Q_j\ \ {\rm and}\ \
    \widetilde{X}_{ij}^k=\frac{1-\Sigma_{ij}^k}{\Sigma_{ij}^k}\widetilde{Y}_{ij}^k.
  \end{equation}
  From equation \eqref{temp-equa01}, it follows that
  $\widetilde{X}_{ij}^k\to\frac{1-(\Xi_1)_{ij}}{(\Xi_1)_{ij}}Q_i^{\mathbb{T}}\widetilde{Y}_{\beta\beta}^*Q_j
  =Q_i^{\mathbb{T}}\widetilde{X}_{\beta\beta}^*Q_j$.

  \medskip
  \noindent
  {\bf Case 4:} $(i,j)$ or $(j,i)\in(\beta\times\beta)\backslash(\overline{\pi}\times\overline{\nu})$.
  In this case we define
  \begin{equation}\label{bb}
    \widetilde{X}_{ij}^k=Q_i^{\mathbb{T}}\widetilde{X}_{\beta\beta}^*Q_j\ \ {\rm and}\ \
    \widetilde{Y}_{ij}^k\equiv Q_i^{\mathbb{T}}\widetilde{Y}_{\beta\beta}^*Q_j.
  \end{equation}

  \medskip
  \noindent
  {\bf Case 5:} $(i,j)$ or $(j,i)\notin(\alpha\times\overline{\nu})\cup(\overline{\pi}\times\gamma)\cup(\beta\times\beta)$.
  In this case we define
  \begin{equation}\label{othercase}
    \widetilde{X}_{ij}^k\equiv\widetilde{X}_{\beta\beta}^*\ \ {\rm and}\ \
    \widetilde{Y}_{ij}^k\equiv \widetilde{Y}_{\beta\beta}^*.
  \end{equation}

  Now for each $k$ we define $X^k=\widehat{P}\widetilde{X}^k\widehat{P}^{\mathbb{T}}$
  and $Y^k=\widehat{P}\widetilde{Y}^k\widehat{P}^{\mathbb{T}}$.
  Then, from \eqref{abminus}-\eqref{othercase}, it follows that
  $(\widehat{P}^{\mathbb{T}}X^k\widehat{P},\widehat{P}^{\mathbb{T}}Y^k\widehat{P})
  =(\widetilde{X}^k,\widetilde{Y}^k)\to (X^*,Y^*)$ as $k\to\infty$, and moreover,
  \[
    \Theta_1^k\circ(\widehat{P}^{\mathbb{T}}X^k\widehat{P})
    +\Theta_2^k\circ(-\widehat{P}^{\mathbb{T}}Y^k\widehat{P})=0,\quad k=1,2,\ldots,
  \]
  Moreover, from equations \eqref{bb} and the last inequalities in \eqref{temp-equa01},
  it follows that
  \[
    Q_{\beta_0}^{\mathbb{T}}\widetilde{X}^kQ_{\beta_0}\equiv
    Q_{\beta_0}^{\mathbb{T}}\widetilde{X}_{\beta\beta}^*Q_{\beta_0}\succeq 0\ \ {\rm and}\ \
    Q_{\beta_0}^{\mathbb{T}}\widetilde{Y}^kQ_{\beta_0}\equiv
    Q_{\beta_0}^{\mathbb{T}}\widetilde{Y}_{\beta\beta}^*Q_{\beta_0}\succeq 0,\quad k=1,2,\ldots.
  \]
  By \cite [Corollary 3.2]{WZZ14}, we have that
  $(-X^k,Y^k)\in\widehat{\mathcal{N}}_{{\rm gph}\,\mathcal{N}_{\mathbb{S}_{+}^n}}(\overline{X}^k,\overline{Y}^k)$
  for each $k$ with $(X^*,Y^*)=\lim_{k\to\infty}(X^k,Y^k)$. To sum up,
  there exist $t_k\downarrow 0$ and
  $(G^k,H^k,X^k,Y^k)\to(G^*,H^*,X^*,Y^*)$ with
  \(
    (-X^k,Y^k)\in\widehat{\mathcal{N}}_{{\rm gph}\,\mathcal{N}_{\mathbb{S}_{+}^n}}(\overline{X}\!+t_kG^k,\overline{Y}\!+t_kH^k)
  \)
  for each $k$.
 \end{proof}

\begin{thebibliography}{1}

 \bibitem{BiPan17}
 {\sc S. J.\ Bi and S. H.\ Pan},
 {\em Multistage convex relaxation approach to rank regularized minimizataion problems
 based on equivalent mathematical program with a generalized complementarity constraint},
 SIAM Journal on Control and Optimization, 55(2017): 2493-2518.

 \bibitem{Burdakov16}
  {\sc O. P. \ Burdakov, C. \ Kanzow and A. \ Schwartz},
  {\em Mathematical programs with cardinality constraints: reformulation by
  complementarity-type conditions and a regularization method},
  SIAM Journal of Optimization, 26(2016): 397-425.



  \bibitem{Davenport16}
  {\sc M. A.\ Davenport and J.\ Romberg},
  {\em An overview of low-rank matrix recovery from incomplete observations},
  IEEE Journal of Selected Topics in Signal Processing, 10(2016): 608-622.

  \bibitem{DingST14}
  {\sc C.\ Ding, D. F.\ Sun and K. C.\ Toh},
  {\em An introduction to a class of matrix cone programming},
  Mathematical Programming, 144(2014): 141-179.

  \bibitem{DingSY14}
  {\sc C.\ Ding, D. F.\ Sun and J. J.\ Ye},
  {\em First order optimality conditions for mathematical programs with semidefinite cone
  complementarity constraints}, Mathematical Programming, 144(2014): 141-179.

  \bibitem{DR09}
 {\sc A. L.\ Dontchev and R. T.\ Rockafellar},
 {\em Implicit Functions and Solution Mappings},
  Springer Monographs in Mathematics, LLC, New York, 2009.

   \bibitem{Fazel02}
  {\sc M.\ Fazel},
  {\em Matrix Rank Minimization with Applications},
  Stanford University, PhD thesis, 2002.


   \bibitem{Fazel03}
  {\sc M.\ Fazel, H.\ Hindi and S.\ Boyd},
  {\em Log-det heuirstic for matrix rank minimization with applications to Hankel and Euclidean distance matrices},
  American Control Conference, 2003. Proceedings of the 2003, 3: 2156-2162.

   \bibitem{FPST13}
 {\sc M.\ Fazel, T. K.\ Pong, D. F.\ Sun and P.\ Tseng},
 {\em Hankel matrix rank minimization with applications to system identification and realization},
 SIAM Journal on Matrix Analysis, 34(2013): 946-977.



\bibitem{Flegel05}
 {\sc M. L.\ Flegel and  C.\ Kanzow},
 {\em On M-stationary points for mathematical programs with equilibrium constraints}, Journal of Mathematical Analysis and Applications, 310(2005): 286-302.

%

%

 \bibitem{GK16}
  {\sc H.\ Gfrerer and D. Klatte},
  {\em Lipschitz and Holder stability of optimization problems and  generalized equations},
  Mathematical Programming, 158(2016): 35-75.



   \bibitem{Gross11}
  {\sc D.\ Gross},
  {\em Recovering low-rank matrices from few coefficients in any basis},
  IEEE Transactions on Information Theory, 57(2011): 1548-1566.
%




   \bibitem{Ioffe08}
  {\sc A. D.\ Ioffe and J. V.\ Outrata},
  {\em On metric and calmness qualification conditions in subdifferential calculus},
   Set-Valued Analysis, 16(2008): 199--227.



  \bibitem{KR92}
  {\sc A. J.\ King and R. T.\ Rockafellar},
  {\em Sensitivity analysis for nonsmooth generalized equations}, Mathematical Programming, 55(1992): 193-212.
%

  \bibitem{LXW13}
  {\sc M. J.\ Lai, Y. Y.\ Xu and W. T.\ Yin},
  {\em Improved iteratively reweighted least squares for unconstrained smoothed $\ell_q$ minimization},
  SIAM Journal on Numerical Analysis, 51 (2013): 927-957.


   \bibitem{Lan64}
  {\sc P.\ Lancaster},
  {\em On eigenvalues of matrices dependent on a parameter},
  Numerische Mathematik, 6(1964): 377-387.

  \bibitem{Le13}
  {\sc H. Y.\ Le},
  {\em Generalized subdifferentials of the rank function},
   Optimization Letters,  7(2013):731-743.

  \bibitem{Levy96}
  {\sc A. B.\ Levy},
  {\em Implicit multifunction theorems for the sensitivity analysis of variational conditions},
  Mathematical Programming, 74(1996): 333-350.

  \bibitem{Lewis05}
  {\sc A. S.\ Lewis and H.\ Sendov},
  {\em Nonsmooth analysis of singular values},
  Set-Valued Analysis, 13(2005): 213-241.

  \bibitem{Lewis95}
  {\sc A. S. Lewis},
  {\em The convex analysis of unitarily invariant matrix functions}, Jounral of Convex Analysis, 2(1995): 173-183.

  \bibitem{LiuBiPan18}
  { Y. L. \ Liu, S. J. \ Bi and  S. H.\  Pan},
  {\em Equivalent Lipschitz surrogates for zero-norm and rank optimization problems},
  Journal of Global Optimization, 72(2018): 679-704.

  \bibitem{LiuPan19}
  { Y. L. \ Liu and  S. H.\  Pan},
  {\em Regular and limiting normal cones to the graph of the subdifferential
  mapping of the nuclear norm}, Set-Valued and Variational Analysis, 27(2019): 71-85.



  \bibitem{Miao16}
  {\sc W. M.\  Mao, S. H. \ Pan and D. F. \ Sun},
  {\em A rank-corrected procedure for matrix completion with fixed basis coefficients},
  Mathematical Programming, 159(2016): 289--338.

  \bibitem{Mohan12}
  {\sc K.\ Mohan and M.\ Fazel},
  {\em Iterative reweighted algorithm for matrix rank minimization},
  Journal of Machine Learning Research, 13(2012): 3441-3473.




 \bibitem{Mordu93}
 {\sc B. S.\ Mordukhovich},
 {\em Complete characterization of openess, metric regularity, and Lipschitzian properties of multifunctions},
 Transactions of the American Mathematical Society, 340(1993): 1-35.


 \bibitem{Negahban11}
 {\sc S.\ Negahban and M. J.\ Wainwright},
 {\em Estimation of (near) low-rank matrices with noise and high-dimensional scaling},
  Annals of Statistics, 39(2011): 1068-1097.


 \bibitem{Nie12}
 {\sc F.\ Nie, H.\ Huang and C.\ Ding},
 {\em Low-rank matrix recovery via efficient schatten p-norm minimization},
  In Proceedings of the Twenty-Sixth AAAI Conference on Artificial Intelligence, 2012: 655-661.


  \bibitem{PanLN17}
  {\sc L. L.\ Pan, Z. Y.\ Luo and N. H.\ Xiu},
  {\em Restricted robinson constraint qualification and optimality for cardinality-constrained cone Programming},
  Journal of Optimziation Theory and Application, 175(2017): 104-118.


  \bibitem{PangMOR17}
  {\sc J. S. \ Pang, M. \ Razaviyayn and A. \ Alvarado},
  {\em Computing B-stationary points of nonsmooth dc programs},
  Mathematics of Operations Research, 42(2017): 95-118.



  \bibitem{Pietersz04}
  {\sc R.\ Pietersz and P. J. F.\ Groenen},
  {\em Rank reduction of correlation matrices by majorization},
  Quantitative Finance, 4(2004): 649-662.

  \bibitem{Roc70}
  {\sc R. T.\ Rockafellar},
  {\em Convex Analysis}, Princeton University Press, 1970.

  \bibitem{RW98}
 {\sc R. T.\ Rockafellar and R. J-B.\ Wets},
 {\em Variational Analysis}, Springer, 1998.




%


 \bibitem{Sun02}
  {\sc D. F.\ Sun and J.\ Sun},
  {\em Semismooth matrix valued functions},  Mathematics of Operations Research, 27(2002): 150-169.

  \bibitem{Torki01}
  {\sc M.\ Torki},
  {\em Second-order directional derivatives of all eigenvalues of a symmetric matrix},
  Nonlinear analysis, 46(2001): 1133-1150.

%





 \bibitem{Watson92}
 {\sc G. A.\  Watson},
 {\em Characterization of the subdifferential of some matrix norms},
  Linear Algebra and Applications, 170(1992): 33-45.


 \bibitem{WZZ14}
 {\sc J.\ Wu, L. W.\ Zhang and Y.\ Zhang},
 {\em Mathematical programs with semidefinite cone complementary constraints:
 constraint qualifications and optimality conditions},
 Set-Valued and Variational Analysis, 22(2014): 155-187.

  \bibitem{WuPanBi18}
 {\sc Y. Q.\ Wu, S. H.\ Pan and S. J.\ Bi},
 {\em KL property of exponent $1/2$ for zero-norm regularized
 quadratic function on sphere and its application},
 arXiv:1811.04371, 2018.






%
%




%



 \end{thebibliography}
 \end{document}